\definecolor{darkgreen}{rgb}{0,0.6,0}
\definecolor{darkred}{rgb}{0.7,0,0}
\definecolor{darkblue}{rgb}{0,.2,.7}
\definecolor{berndpink}{rgb}{.6,.2,.6}
\newcommand\define{\mathrel{:=}}
\newcommand\ede{\define}
\newcommand\seq{=}
\newcommand\esssup{\mathop{\mathrm{ess}\kern0.08em\textrm{-}\kern0.08em\mathrm{sup}}}
\newcommand\II{\mathrm{I}\hskip-.3mm\mathrm{I}}
\renewcommand\ne[1]{|#1|}
\newcommand\cunif{c_{\mathrm{unif}}}
\newcommand\datver[1]{\def\datverp
{\par\boxed{\boxed{\text{Version: #1; Run: \today}}}}}\datver{0.1}
\newcommand{\dist}{\operatorname{dist}}
\newcommand{\CC}{\mathbb C}
\newcommand{\HH}{\mathbb{H}}
\newcommand{\NN}{\mathbb N}
\newcommand{\RR}{\mathbb R}
\newcommand{\CIc}{{\mathcal C}^{\infty}_{\text{c}}}
\newcommand\pa{{\partial}}
\newcommand{\loc}{\operatorname{loc}}
\newcommand{\vol}{\operatorname{vol}}
\newcommand{\dvol}{\operatorname{dvol}}
\newcommand{\maC}{\mathcal C}
\newcommand{\maD}{\mathcal D}
\newcommand\<{\langle}
\renewcommand\>{\rangle}
\newcommand{\rinj}{\mathop{r_{\mathrm{inj}}}}
\newcommand\partialDM{\partial_{\kern-.04em D}\kern-.08em M}
\newcommand\partialNM{\partial_{\kern-.04em N}\kern-.08em M}
\newcommand\partialD{\partial_{\kern-.04em D}\kern-.08em}
\newcommand\partialN{\partial_{\kern-.04em N}\kern-.08em}
\newtheorem{theorem}{Theorem}[section]
\newtheorem{proposition}[theorem]{Proposition}
\newtheorem{corollary}[theorem]{Corollary}
\newtheorem{lemma}[theorem]{Lemma}
\theoremstyle{definition}
\newtheorem{definition}[theorem]{Definition}
\theoremstyle{remark}
\newtheorem{remark}[theorem]{Remark}
\newtheorem{notation}[theorem]{Notations}
\newtheorem{example}[theorem]{Example}
\author[B. Ammann]{Bernd Ammann} \address{B. Ammann, Fakult\"at f\"ur
  Mathematik, Universit\"at Regensburg, 93040 Regensburg, Germany}
\email{bernd.ammann@mathematik.uni-regensburg.de}
\author[N. Gro\ss e]{Nadine Gro\ss e} \address{N. Gro\ss e,
  Mathematisches Institut, Universit\"at Freiburg, 79104 Freiburg,
  Germany} \email{nadine.grosse@math.uni-freiburg.de}
\author[V. Nistor]{Victor Nistor} \address{Universit\'{e} de Lorraine,
  UFR MIM, Ile du Saulcy, CS 50128, 57045 METZ, France and
  Math. Dept., Penn State Univ, PA 16802, USA}
\email{victor.nistor@univ-lorraine.fr}
\thanks{B.A. has been partially supported by SFB 1085 Higher Invariants,
Regensburg, funded by the DFG. 
  V.N. has been partially supported by
  ANR-14-CE25-0012-01 (SINGSTAR).\\
Manuscripts available from \textbf{http:{\scriptsize
    //}iecl.univ-lorraine.fr{\scriptsize
    /}$\tilde{}$Victor.Nistor{\scriptsize /}}\\
AMS Subject classification (2010): 58J32 (Primary), 35J57, 35R01,
35J70 (Secondary)}
\date\today
\begin{document}

\title[Well-posedness of the Laplacian and bounded
  geometry]{Well-posedness of the Laplacian on manifolds with boundary
  and bounded geometry}

\begin{abstract}
Let $M$ be a Riemannian manifold with a smooth boundary.  The main
question we address in this article is: ``When is the Laplace-Beltrami
operator $\Delta\colon H^{k+1}(M)\cap H^1_0(M) \to H^{k-1}(M)$, $k\in
\NN_0$, invertible?'' We consider also the case of mixed boundary
conditions. The study of this main question leads us to the class of
manifolds with boundary and bounded geometry introduced by Schick
(Math. Nach. 2001). We thus begin with some needed results on the
geometry of manifolds with boundary and bounded geometry. Let
$\partialDM \subset \pa M$ be an open and closed subset of the
boundary of $M$. We say that $(M, \partialDM)$ has \emph{finite width}
if, by definition, $M$ is a manifold with boundary and bounded
geometry such that the distance $\dist(x, \partialDM)$ from a point $x
\in M$ to $\partialDM \subset \pa M$ is bounded uniformly in $x$ (and
hence, in particular, $\partialDM$ intersects all connected components
of $M$).  For manifolds $(M, \partialDM)$ with finite width, we prove
a Poincar\'e inequality for functions vanishing on $\partialDM$, thus
generalizing an important result of Sakurai (Osaka J. Math, 2017). The
Poincar\'e inequality then leads, as in the classical case to results
on the spectrum of $\Delta$ with domain given by mixed boundary
conditions, in particular, $\Delta$ is invertible for manifolds $(M,
\partialDM)$ with finite width. The bounded geometry assumption then
allows us to prove the well-posed\-ness of the Poisson problem with
mixed boundary conditions in the higher Sobolev spaces $H^s(M)$, $s
\ge 0$.
\end{abstract}

\maketitle
{\hypersetup{linkcolor=black}\tableofcontents}

\section{Introduction}

This is the first in a sequence of papers devoted to the spectral and
regularity theory of differential operators on a suitable non-compact
manifold with boundary~$M$ using analytic and geometric methods.

Let $\Delta \define d^* d \geq 0$ be the (geometer's) Laplace operator
acting on functions, also called the Laplace-Beltrami operator. Let
$k$ be a non-negative integer.  The \textbf{Main Question} of this
paper is:
\begin{quotation}
 ``Is the operator $\Delta \colon H^{k+1}(M) \cap H^1_0(M) \to
  H^{k-1}(M)$ invertible?''
\end{quotation}
The range of $k$ needs to be specified each time, and deciding the
range for $k$ is part of the problem, but in this paper we are mainly
concerned with the cases $k=0$ and $k \in \mathbb{N}_0 := \{0, 1, 2,
\ldots \}$. Also part of the problem is, of course, to choose the
``right definition'' for the relevant function spaces. The Main
Question of this paper turns out, in fact, to be mostly a geometric
question, involving the underlying properties of our manifold~$M$.  It
involves, in particular a geometric approach to the Poincar\'e
inequality and to the regularity of the solution $u$ of $\Delta u =
f$. Most of our results on the Laplacian extend almost immediately to
uniformly strongly elliptic operators, however, in order to keep the
presentation simple, we concentrate in this paper on the
Laplace-Beltrami operator. The general case will be discussed in
  \cite{AGN3}. Note also that our question is different (but related)
  to that of the presence of zero in the spectrum of the Laplacian
  \cite{LottZero}.

For $k = 0$, it is easy to see that the invertibility of $\Delta$ in
our Main Question is equivalent to the Poincar\'e inequality for
$L^2$-norms (see, for instance, Proposition \ref{prop.isomorphism}).
This answers completely our question for $k = 0$. Our main result
(proved in the more general framework of mixed boundary conditions) is
that our Main Question has an affirmative answer for all $k$ {\em
  whenever $M$ has finite width.} As a consequence, we obtain results
on the spectral theory of the Laplacian with suitable mixed boundary
conditions on manifolds with bounded geometry as well as on the
regularity of the solutions of equations of the form $\Delta u = f$.

Let us formulate our results in more detail. Let $(M,g)$ be a smooth
$m$-dimensional Riemannian manifold with \emph{smooth boundary and
  bounded geometry} (see Definition~\ref{def_bdd_geo} and above).  Our
manifolds are assumed to be paracompact, but not necessarily second
countable. See Remark \ref{def.mfd}.  \emph{It is important in
  applications \underline{not} to assume $M$ to be connected.}  We
denote the boundary of~$M$ by~$\pa M$, as usual, and we assume that we
are given a {\em disjoint union decomposition}
\begin{align}\label{eq.decomposition}
 \pa M \ = \ \partialDM \amalg \partialNM\,,
\end{align}
where $\partialDM$ or $\partialNM$ are (possibly empty) open subsets
of $\pa M$ and $\amalg$ denotes the disjoint union, as usual. We shall
say that the boundary of $M$ is {\em partitioned}. Of course, both
$\partialDM$ and $\partialNM$ will also be closed.

Also, we shall say that the pair $(M, \partialDM)$ has \emph{finite
  width} if the distance from any point of $M$ to $\partialDM$ is
\emph{bounded uniformly} on~$M$ and $\partialDM$ intersects all
connected components of $M$ (Definition~\ref{def.finite.width}). In
particular, if $(M, \partialDM)$ has finite width, then $\partialDM$
is not empty.  The simplest example is $M=\mathbb R^{n-1}\times
[0,T]\subset \mathbb R^n$ with $\partial_D M= \mathbb R^{n-1}\times
\{0\}$ and $\partial_N M= \mathbb R^{n-1}\times \{1\}$.

Before stating our main result, Theorem \ref{thm_Poin_intro}, it is
convenient to first state the Poincar\'e inequality that is used in
its proof.

\begin{theorem}[Poincar\'e inequality]\label{thm_Poin_intro}
Let $M$ be an $m$-dimensional smooth Riemannian manifold with smooth,
partitioned boundary $\partial M = \partialDM \amalg \partialNM$.
Assume that $(M,\partialDM)$ has finite width. Then, for every $p\in
[1,\infty]$, $(M, \partialDM)$ satisfies the $L^p$-Poincar\'e
inequality, that is, there exists $0 < c=c_{M,p} < \infty$ such that
\begin{equation*} 
   \Vert f\Vert _{L^p(M)} \leq c \big( \Vert f\Vert _{L^p(\partialDM)} +
   \Vert d f\Vert _{L^p(M)} \big)
\end{equation*}
for all $f \in W^{1,p}_{\loc}(M)$.
\end{theorem}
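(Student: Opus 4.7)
The plan is to start from a pointwise fundamental-theorem-of-calculus bound along minimizing geodesics from $\partialDM$, and then integrate via the normal exponential map from $\partialDM$, using bounded geometry to control the Jacobian. Let $R \define \sup_{x \in M} \dist(x, \partialDM) < \infty$, which is finite by the finite-width hypothesis. For each $x \in M$, pick a nearest point $y(x) \in \partialDM$ with $d(x) \define \dist(x, \partialDM) \le R$, and let $\gamma \colon [0, d(x)] \to M$ be a unit-speed minimizing geodesic from $y(x)$ to $x$. A first-variation argument forces $\gamma$ to leave $\partialDM$ orthogonally, so $\gamma(t) = \exp_{y(x)}(t \nu_{y(x)})$ with $\nu_{y(x)}$ the inward unit normal. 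For $f$ smooth (we reduce to this case by mollification in uniform bounded-geometry charts near $\partialDM$), the FTC along $\gamma$ gives $|f(x)| \le |f(y(x))| + \int_0^{d(x)} |df(\gamma(t))| \, dt$, and raising to the $p$-th power with H\"older on the second term yields
\[
   |f(x)|^p \le 2^{p-1}\Bigl(|f(y(x))|^p + R^{p-1}\! \int_0^{d(x)} |df(\gamma(t))|^p \, dt\Bigr).
\]
The limiting case $p = \infty$ follows directly from the pointwise identity, with no further integration: $\|f\|_{L^\infty(M)} \le \|f\|_{L^\infty(\partialDM)} + R\, \|df\|_{L^\infty(M)}$.

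For $p < \infty$, I would globalize the pointwise estimate by parametrizing $M$ via the normal exponential map $E \colon \partialDM \times [0, R] \to M$, $E(y, t) \define \exp_y(t \nu_y)$. By the first paragraph, every $x \in M$ lies in the image of $E$ at the parameter $(y(x), d(x))$, so $E$ is surjective; moreover, on the set $U \define \{(y, t) : 0 \le t < \tau(y)\}$, where $\tau(y)$ denotes the cut time of $\partialDM$ in the direction $\nu_y$, the map $E|_U$ is an injective immersion onto $M$ minus the cut locus of $\partialDM$, which is of full measure in $M$. The area formula with Jacobian $J(y, t) \define |\det dE(y,t)|$ then bounds $\int_M |f|^p \, dx$ by $\int_{\partialDM} \int_0^R |f(E(y,t))|^p J(y,t) \, dt \, d\sigma(y)$. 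Inserting the pointwise inequality and applying Fubini to swap the innermost integrations reduces everything to showing that for some constants $C_1, C_2$,
\[
  \int_M |f|^p \, dx \ \le \ C_1 R\, \|f\|_{L^p(\partialDM)}^p + C_2 R^p\, \|df\|_{L^p(M)}^p,
\]
provided $J$ satisfies uniform two-sided bounds $0 < c \le J \le C$ on $U \cap (\partialDM \times [0, R])$ and $E$ has uniformly bounded fiber multiplicity there.

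The main obstacle is precisely this uniform Jacobian-and-multiplicity control across the whole slab of width $R$. Short-time control is automatic: Schick's bounded-geometry hypotheses give uniform bounds on the second fundamental form of $\pa M$, a uniform lower bound on the injectivity radius of $M$, and uniform sectional curvature bounds, and hence, by Jacobi-field / Rauch comparison, two-sided bounds on $J$ together with injectivity of $E$ on a collar $\partialDM \times [0, \varepsilon_0]$ of uniform width $\varepsilon_0 > 0$. The delicate point is what happens for $t > \varepsilon_0$, where focal points of $\partialDM$ and overlaps of normal geodesics can appear. I expect the proof to resolve this by one of two routes: either (a) iterate the collar estimate along $\lceil R/\varepsilon_0 \rceil$ slabs, chaining a local Poincar\'e inequality through a uniformly locally finite covering of $M$ by bounded-geometry balls (finite width bounds the chain length uniformly), or (b) work directly on $U$ and absorb the multiplicity of $E$ into a constant depending only on the curvature bound and $R$. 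Either way the uniform constant in the Poincar\'e inequality comes from combining finite width (controlling the length of geodesic paths) with bounded geometry (controlling the distortion along them); dropping either hypothesis makes the constant blow up.
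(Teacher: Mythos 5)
Your overall strategy — integrate the one-dimensional Poincar\'e estimate along normal geodesics from $\partialDM$ and control the change of variables via the normal exponential map and bounded geometry — is indeed the paper's strategy, following Sakurai \cite{Sakurai}. However, there are two genuine gaps, the first of which is the central technical difficulty of the theorem.

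First, your very first step already fails in general: you assert that for each $x\in M$ a unit-speed \emph{geodesic} $\gamma$ from a nearest point $y(x)\in\partialDM$ to $x$ exists and leaves $\partialDM$ orthogonally. In a manifold with boundary, a shortest \emph{curve} from $x$ to $\partialDM$ need not be a geodesic: when $\partialNM\neq\emptyset$, it can run along $\partialNM$. Equivalently, your surjectivity claim for $E$ restricted to $\{(y,t): 0\le t\le \tau(y)\}$ can fail — there may be an open set of points in $M$ that is not reached by any normal geodesic emanating from $\partialDM$. This is exactly the phenomenon illustrated in Figure~\ref{fig:ex_bound} and discussed in Remark~\ref{rem_extend}. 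The paper's resolution (Lemma~\ref{lem_prod_bound} and Subsection~\ref{ssec.prod}) is to first replace $g$ by a uniformly equivalent metric $g'$ that is a \emph{product near $\pa M$}; with the product structure, a normal geodesic starting in the interior cannot terminate on $\partialNM$ before it stops minimizing, and one then proves Proposition~\ref{prop.a} by a careful Hopf--Rinow-type argument. Neither of your suggested escape routes (chaining collar estimates through $\lceil R/\varepsilon_0\rceil$ slabs, or ``absorbing multiplicity'') addresses this: the obstruction is not about iterating short-time control nor about counting overlaps, but about points that are simply not covered by the foliation at all.

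Second, your Jacobian control asks for more than is true and more than is needed. You request uniform two-sided bounds $0<c\le J\le C$ on the whole slab $\partialDM\times[0,R]$; a lower bound of that sort is false in general, since $J$ degenerates at the cut/focal locus of $\partialDM$. What the paper actually uses (Proposition~\ref{prop.distorsion}, a special case of Heintze--Karcher) is a one-sided bound on the ratio $v(x,t)/v(x,s)\le C$ for $0\le s\le t\le L(x)$, together with $v(x,0)=1$. That weaker control is sufficient for the integration in Subsection~\ref{ssec.poinprod}, and multiplicity is not an issue there because $\exp^{\perp}$ is an embedding off the measure-zero set $M_S$ (Proposition~\ref{prop.fermi}), so the change of variables has multiplicity one almost everywhere.
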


In a nice, very recent paper \cite{Sakurai}, Sakurai has proved this
Poincar\'e inequality for $\partialNM = \emptyset$, and $p = 1$. His
proof (posted on the Arxive preprint server shortly before we posted
the first version this paper) can be extended to our case (once one
takes care of a few delicate points; see Remarks~\ref{rem_sakurai} and
\ref{rem_extend}).  In particular, one can sharpen our Poincar\'e
inequality result by relaxing the bounded-geometry condition by
replacing it with a lower Ricci bound condition and a bound on the
second fundamental form; see Remark~\ref{rem_sakurai}. However, we do
need the `full' bounded geometry assumption for the higher regularity
part of our results, so we found it convenient to consider the current
slightly simplified setting.

For a manifold with boundary
and bounded geometry, we set
\begin{equation}\label{eq.def.H1D}
 H^1_D(M) \define \{\, f \in H^1(M)\, \vert \ f = 0 \mbox{ on }
 \partialDM\, \}
\end{equation}
(note that this definition makes sense, due to the trace theorem for
such manifolds, see Theorem~\ref{thm.trace}) and
\begin{equation}\label{eq.def.cmdm}
 c_{M, \partialDM} \ede \inf \bigl\{\, t \in \RR \,
 \vert\ \|f\|_{L^2(M)} \le t \|d f\|_{L^2(M)},\, (\forall) \, f \in
 H^1_D(M)\, \bigr\},
\end{equation}
with the agreement that $c_{M, \partialDM} = \infty$ if the set on the
right hand side is empty.  Thus $c_{M, \partialDM} > 0$ is the best
constant in the Poincar\'e inequality for $p = 2$ and vanishing
boundary values (see Definition~\ref{def.pPoincare}). Clearly $c_{M,
  \partialDM} > 0$. The definition of $c_{M, \partialDM}$ implies that
$c_{M, \partialDM} := \infty$ if and only if $(M, \partialDM)$ does
not satisfy the Poincar\'e inequality for $p = 2$.

Let $\nu$ be an outward unit vector at the boundary of our manifold
with bounded geometry $M$ and let $\pa_\nu$ be the directional
derivative with respect to $\nu$. Let $d$ be the de Rham
  differential, $d^*$ be its formal adjoint and $\Delta := d^*d$ be
  the scalar Laplace operator. Our main result is the following
well-posed\-ness result.

\begin{theorem} \label{thm.main1}
Let $(M, \partialDM)$ be a manifold with boundary and bounded geometry
and $k\in \mathbb{N}_0 := \{0, 1, 2, \ldots \}$.  Then $\Delta$ with
domain
\begin{equation*}
 \maD(\Delta) \define H^2(M) \cap \{ u = 0 \mbox{ on } \partialDM
 \mbox{ and } \pa_\nu u = 0 \mbox{ on } \partialNM \}
\end{equation*}
is self-adjoint on $L^2(M)$ and, for all $\lambda\in \RR$ with
$\lambda < \gamma := (1 + c_{M, \partialDM}^2)^{-1}$, we have
isomorphisms
\begin{equation*}
 \Delta - \lambda \colon H^{k+1}(M) \cap \maD(\Delta) \to H^{k-1}(M),
 \quad k \geq 1.
\end{equation*}
If $(M, \partialDM)$ has finite width, then $\gamma > 0$.  
\end{theorem}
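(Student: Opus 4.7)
The strategy is to realize $\Delta$ with the stated domain as the self-adjoint operator associated to the closed non-negative quadratic form
\begin{equation*}
Q(u,v) \define \int_M \langle du, dv\rangle_g\, \dvol_g, \qquad u,v\in H^1_D(M),
\end{equation*}
and then to invert $\Delta-\lambda$ via Lax--Milgram on the shifted form. Since $H^1_D(M)$ is a closed subspace of $H^1(M)$ by continuity of the boundary trace (Theorem~\ref{thm.trace}), and since $(Q(u,u)+\|u\|_{L^2(M)}^2)^{1/2}=\|u\|_{H^1(M)}$, the form $Q$ is closed, densely defined and non-negative on $L^2(M)$. Kato's first representation theorem thus produces a self-adjoint operator $A\ge 0$ which acts as $\Delta$ distributionally on the interior of $M$.

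The main obstacle is to identify the operator domain of $A$ with $\maD(\Delta)$ from the statement, which amounts to elliptic $H^2$-regularity up to the boundary for the mixed problem, uniform over a bounded-geometry atlas. I would cover $M$ by a uniformly locally finite atlas of two kinds of charts --- interior balls in $\RR^m$ and boundary charts flattening either $\partialDM$ or $\partialNM$ to a piece of $\RR^{m-1}\times\{0\}\subset \overline{\RR^m_+}$ --- in which the coefficients of $\Delta$ have derivatives of every order bounded uniformly. The classical half-ball $H^2$-estimates for the pure Dirichlet and pure Neumann problems hold with chart-independent constants, and a bounded-geometry partition of unity glues them into the global a priori estimate
\begin{equation*}
\|u\|_{H^2(M)} \ \le \ C\bigl(\|\Delta u\|_{L^2(M)}+\|u\|_{L^2(M)}\bigr)
\end{equation*}
for every $u$ in the operator domain of $A$. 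For such a $u$, vanishing of $\pa_\nu u$ on $\partialNM$ is then read off from the identity $Q(u,v)=(Au,v)_{L^2}$ via integration by parts, completing the identification.

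Setting $c\define c_{M,\partialDM}$ and choosing $\alpha\define (1-\lambda c^2)/(1+c^2)>0$ for $\lambda<\gamma=(1+c^2)^{-1}$, the Poincar\'e inequality~\eqref{eq.def.cmdm} yields
\begin{equation*}
Q(u,u)-\lambda\|u\|_{L^2(M)}^2 \ \ge\ \alpha\,\|u\|_{H^1(M)}^2\qquad\text{for all } u\in H^1_D(M),
\end{equation*}
so the shifted form is coercive on $H^1_D(M)$. Lax--Milgram then furnishes, for every $f\in L^2(M)$, a unique $u\in H^1_D(M)$ solving the weak equation, and the domain identification promotes it to $u\in\maD(\Delta)$ with $(\Delta-\lambda)u=f$; this handles the case $k=1$. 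For $k\ge 2$ I would bootstrap inductively with the same chart-wise elliptic regularity now applied at level $H^{k+1}$, giving $\|u\|_{H^{k+1}(M)} \le C_k\bigl(\|f\|_{H^{k-1}(M)}+\|u\|_{L^2(M)}\bigr)$ and hence the isomorphism for all $k\ge 1$. The final claim $\gamma>0$ under the finite-width hypothesis follows immediately from Theorem~\ref{thm_Poin_intro} applied with $p=2$, which gives $c<\infty$.
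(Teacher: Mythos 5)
Your overall structure---a form-based self-adjointness argument plus uniform elliptic regularity, followed by Lax--Milgram and a bootstrap---is sound and close in spirit to the paper's proof via Theorems~\ref{thm.H1new}, \ref{thm.regularity2} and~\ref{thm.Hm2}. One difference is cosmetic: you obtain self-adjointness by applying Kato's first representation theorem to the closed form $Q$ on $H^1_D(M)$ and then identifying the operator domain with $\maD(\Delta)$, whereas the paper first proves that $\Delta-\lambda$ is invertible for a real $\lambda<\gamma$ on the given domain and then invokes the standard criterion (Remark~\ref{rem.s-a}) that a symmetric operator with a real resolvent point is self-adjoint. Both routes are legitimate and both hinge on the same uniform elliptic regularity input, so this is largely a matter of presentation; your integration-by-parts reading of the natural Neumann condition on $\partialNM$ is also correct, and your sharper coercivity constant $\alpha=(1-\lambda c^2)/(1+c^2)$ checks out.

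The substantive issue is the sentence asserting that ``the classical half-ball $H^2$-estimates for the pure Dirichlet and pure Neumann problems hold with chart-independent constants.'' That claim is what the whole theorem rests on, and in your write-up it is asserted rather than argued. Uniform boundedness of the coefficients and all their derivatives in a bounded-geometry atlas does ultimately imply uniform constants, but the textbook elliptic regularity theorems do not come packaged with a clean statement of how the constant depends on a $C^k$ modulus of the coefficients, and tracing that dependence through the classical proofs is exactly the tedium one would like to avoid. The paper circumvents it with a soft compactness argument: Lemma~\ref{lemma.bd.fam} uses Arzel\`a--Ascoli to show that the family $\{P_{p_\gamma}\}$ of coordinate representations of $\Delta$ is precompact in the relevant $\maC^\infty$ topology, and Lemma~\ref{lemma.bd.fam2} then argues by contradiction---if the local constants were not uniform, a subsequence of charts would converge to a limiting strongly elliptic operator $P_\infty$ on the model half-cylinder whose own elliptic regularity yields a contradiction. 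Your proposal should either carry out a careful constant-tracking in the classical estimates or invoke such a compactness device explicitly; as written, the central a priori bound $\|u\|_{H^2(M)}\le C(\|\Delta u\|_{L^2(M)}+\|u\|_{L^2(M)})$ and its higher-order analogues are unsupported. The remainder---Lax--Milgram, the inductive bootstrap to $k\ge 2$, and deducing $\gamma>0$ from Theorem~\ref{thm_Poin_intro} with $p=2$---is fine.
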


The purely Dirichlet version of this Theorem is contained in
Theorem~\ref{thm.Hm}; for the general version, see Theorem
\ref{thm.Hm2}. See Subsection~\ref{ssec.4.3} for further details. One
obtains a version of the isomorphism in the Theorem also for
$k=0$. The example of a smooth domain $\Omega \subset \RR^n$ that
coincides with the interior of a smooth cone outside a compact set
shows that some additional assumptions on $(M, \partialDM)$ are
necessary in order for $\Delta$ to be invertible.

Our well-posed\-ness result is of interest in itself, as a general
result on analysis on non-compact manifolds, but also because it has
applications to partial differential equations on singular and
non-compact spaces, which are more and more often studied. Examples
are provided by analysis on curved space-times in general relativity
and conformal field theory, see \cite{BaerGinoux, BaerWafo, gerardBG,
  GerardWrochna, KRS15, SchrohePhysics, TataruSurv15} and the
references therein. An important example of manifolds with bounded
geometry is that of asymptotically hyperbolic manifolds, which play an
increasingly important role in geometry and physics
\cite{ammannGrosse16b, bouclet2006,delay2008, moroianu2010}.  It was
shown in \cite{mueller.nardmann:15} that every Riemannian manifold is
conformal to a manifold with bounded geometry.  Moreover, manifolds of
bounded geometry can be used to study boundary value problems on
singular domains, see, for instance, \cite{BMNZ,
  NazarovSokolowskiAA09, CDN12, daugeBook, JerisonSavin, KMR,
  MunnierCusp, NazarovCusp94, NP} and the references therein.

The proof of Theorem \ref{thm.main1} is based on a Poincar\'{e}
inequality on~$M$ for functions vanishing on~$\partialDM$, under the
assumption that the pair $(M, \partialDM)$ has finite width, and on
local regularity results (see Theorems \ref{thm_Poin_intro} and
\ref{thm.regularity}). The higher regularity results is obtained using
a description of Sobolev spaces on manifolds with bounded geometry
using partitions of unity \cite{Grosse.Schneider.2013, TriebelBG} and
is valid without the finite width assumption (we only need that $M$ is
with boundary and bounded geometry for our regularity result to be
true). We also need the classical regularity of the Dirichlet and
Neumann problems for strongly elliptic operators on smooth
domains. The non-compactness of the boundary is dealt with using a
continuity argument.

The paper is organized as follows. Section~\ref{sec.one} is devoted to
preliminary results on manifolds with boundary and bounded
geometry. We begin with several equivalent definitions of manifolds
with boundary and bounded geometry and their basic properties, some of
which are new results; see in particular
Theorem~\ref{theo_crit_bdd_geo}. The subsections~\ref{ssec.inj}
and~\ref{ssec.control} are devoted to some further properties of
manifolds with boundary and bounded geometry that are useful for
analysis. In particular, we introduce and study submanifolds with
bounded geometry and we devise a method to construct manifolds with
boundary and bounded geometry using a gluing procedure, see Corollary
\ref{cor.gluing}.  In Subsection~\ref{ssec.Sobolev} we recall the
definitions of basic coordinate charts on manifolds with boundary and
bounded geometry and use them to define the Sobolev spaces, as for
example in \cite{Grosse.Schneider.2013}.  The Poincar\'e inequality
for functions vanishing on $\partialDM$ and its proof, together with
some geometric preliminaries can be found in Section~\ref{sec.two}.
The last section is devoted to applications to the analysis of the
Laplace operator. We begin with the well-posed\-ness in energy spaces
(if $(M, \partialDM)$ has finite width) and then we prove additional
regularity results for solutions as well the fact that spectrum of
$\Delta$ is contained in the closed half-line $\{ \lambda \in \RR
\vert \lambda \ge \gamma\}$ (see Theorem \ref{thm.main1}). In
particular, the proofs of Theorem~\ref{thm.main1}, as well as that of
its generalization to mixed boundary conditions,
Theorem~\ref{thm.Hm2}, can both be found in this section. Some
  of our results (such as regularity), extend almost without change to
  $L^p$-spaces, but doing that would require a rather big overhead of
  analytical technical background. On the other hand, certain results,
  such as the explicit determination of the spectrum using positivity,
  {\em do not extend} to the $L^p$ setting. See 
  \cite{ammannGrosse16b, kordyukovLp2,sturm_93} and the references therein.

\section{Manifolds with boundary and bounded geometry\label{sec.one}}

In this section we introduce manifolds with boundary and bounded
geometry and we discuss some of their basic properties. Our approach
is to view both the manifold and its boundary as submanifolds of a
larger manifold with bounded geometry, but without boundary. The
boundary is treated as a submanifold of codimension one of the larger
manifold, see Section~\ref{ssec.def.bdd}.  This leads to an
alternative definition of manifolds with boundary and bounded
geometry, which we prove to be equivalent to the standard one, see
\cite{Schick.2001}.

We also recall briefly in Section~\ref{ssec.Sobolev} the definition of the
Sobolev spaces used in this paper.

\subsection{Definition of manifolds with boundary and bounded geometry}
\label{ssec.def.bdd}
\emph{Unless explicitly stated otherwise, we agree that throughout
  this paper, $M$ will be a smooth Riemannian manifold of dimension
  $m$ with smooth boundary $\pa M$, metric $g$ and volume form
  $\dvol_g$.  We are interested in non-compact manifolds~$M$. The
  boundary will be a disjoint union $\pa M = \partialDM \amalg
  \partialNM$, with $\partialDM$ and $\partialNM$ both open (and
  closed) subsets of $\pa M$, as in Equation
  \eqref{eq.decomposition}.}  (That is, the boundary of $M$ is
partitioned.)  Typically, $M$ will be either assumed or proved to be
with (boundary and) bounded geometry.

\begin{remark}[Definition of a manifold]\label{def.mfd}
In textbooks, one finds two alternative definitions of a smooth
manifold. Some textbooks define a manifold as a locally Euclidean,
second countable Hausdorff space together with a smooth structure, see
e.g. \cite[Chapiter~1]{lee:smooth-manifolds}.  Other textbooks, however,
replace second countability by the weaker requirement that the
manifold be a paracompact topological space. The second choice implies
that every connected component is second countable, thus both
definitions coincide if the number of connected components is
countable. Thus, a manifold in the second sense is a manifold in the
first sense if, and only if, the set of connected components is
countable. As an example, consider $\mathbb{R}$ with the discrete
topology. It is not a manifold in the first sense, but it is a
$0$-dimensional manifold in the second sense.  For most statements in
differential geometry, it does not matter which definition is chosen.
However, in order to allow uncountable index sets $I$ in
Theorem~\ref{thm_Poin}, we will only assume that our manifolds are
paracompact (thus we will {\em not require} our manifolds to be second
countable). This is needed in some applications.
 \end{remark}
 
For any $x\in M$, we define 
\begin{align*}
  \maD_x\define \left\{v \in T_xM\mid \text{there exists a geodesic
    $\gamma_v\colon [0, 1] \to M$ with $\gamma_v'(0)=v$}\right\}.
\end{align*}
This set is open and star-shaped in $T_xM$ and
\begin{equation}\label{eq.def.D}
  \maD\define \bigcup_{x\in M}\maD_x
\end{equation}
is an open subset of $TM$. The map $\exp^M\colon \maD\to M$,
$\maD_x\ni v\mapsto \exp^M(v)=\exp^M_x(v) \define  \gamma_v(1)$ is called
the \emph{exponential map}.

If $x, y \in M$, then $\dist(x, y)$ denotes the distance between $x$
and $y$, computed as the infimum of the set of lengths of the paths in
$M$ connecting $x$ to $y$.  If $x$ and $y$ are not in the same
connected component, we set $\dist(x, y)=\infty$. The map
$\dist\colon M\times M\to [0,\infty]$ satisfies the axioms of an extended
metric on $M$. This means that it satisfies the usual axioms of a
metric, except that it takes values in $[0,\infty]$ instead of
$[0,\infty)$.  If $A \subset M$ is a subset, then
\begin{equation}\label{eq.def.Ur}
 U_r(A) \define \{x \in M \mid\, \exists y \in A,\, \dist(x, y) < r\,\}
\end{equation}
will denote the $r$-neighborhood of $A$, that is, the set of points of
$M$ at distance $< r$ to $A$. Thus, if $E$ is a Euclidean space, then
\begin{equation}\label{eq.def.BEr}
 B_r^E(0)\define U_r(\{0\}) \subset E
\end{equation}
is simply the ball of radius
$r$ centered at $0$. 

We shall let $\nabla=\nabla^Y$ denote the Levi-Civita connection on a
Riemannian manifold $Y$ and $R^Y$ denote its curvature.  Now let $Y$
be a submanifold of $M$ with its induced metric.  Recall the maximal
domain $\maD \subset TM$ of the exponential map from Equation
\eqref{eq.def.D}. Let $\maD^\perp \define \maD \cap \nu_Y$ be the
intersection of $\maD$ with the normal bundle $\nu_Y$ of $Y$ in $M$
and
\begin{align}\label{eq.def.exp.perp}
 \exp^\perp \define \exp|_{\maD^\perp}\colon \maD^\perp \to M,
\end{align}
be the \emph{normal exponential map}. By choosing a local unit normal
vector field $\nu$ to $Y$ to locally identify $\maD^{\perp}$ with a
subset of $Y\times \RR$, we obtain $exp^\perp(x,t) \define
\exp^M_x(t\nu_x)$. As~$\maD$ is open in~$TM$, we know that
$\maD^\perp$ is a neighborhood of $Y \times \{0\}$ in $Y \times \RR$.

\begin{definition}\label{def_ttly_bdd_curv}
We say that a Riemannian manifold $(M,g)$ has \emph{totally bounded
  curvature}, if its curvature $R^M$ satisfies
\begin{align*}
  \Vert \nabla^k R^M \Vert_{L^\infty} < \infty\qquad\text{for all
  } k \ge 0.
\end{align*}
where $\nabla$ is the Levi-Civita connection on $(M,g)$, as before.
\end{definition}

Recall the definition of the ball $B_r^E$ of Equation \eqref{eq.def.BEr}. 
The injectivity radius of $M$ at $p$, respectively, the {\em injectivity
radius} of $M$ are then
\begin{align*}
\begin{gathered}
  \rinj(p) \define \sup\{ r \mid \exp_p \colon B_r^{T_pM}(0) \to M
  \text{ is a diffeomorphism onto its image} \}\\
 \rinj(M) \define \inf_{p \in M} \rinj(p) \in [0,\infty].
\end{gathered}
\end{align*}
Recall also following classical definition in the case $\partial
M=\emptyset$.

\begin{definition}
A Riemannian manifold without boundary $(M,g)$ is said to be of
\emph{bounded geometry} if $M$ has \emph{totally bounded curvature}
and $\rinj(M)>0$.
\end{definition}

This definition cannot be carried over in a straightforward way to
manifolds with boundary, as manifolds with non-empty boundary always
have $\rinj(M)=0$. We use instead the following alternative approach.

Let $N^n$ be a submanifold of a Riemannian manifold $M^m$. Let
$\nu_N\subset TM|_N$ be the normal bundle of $N$ in $M$. Informally,
the second fundamental form of $N$ in $M$ is defined as the map
\begin{equation}\label{eq.def.II}
 \II\colon TN\times TN\to \nu_N, \quad \II(X,Y)\define \nabla^M_X Y -
 \nabla^N_X Y.
\end{equation}

Let us specialize this definition to the case when $N$ is a
hypersurface in $M$ (a submanifold with $\dim N=\dim M - 1$) assumed
to carry a globally defined normal vector field $\nu$ of unit length,
called a \emph{unit normal field}. Then one can identify the normal
bundle of $N$ in $M$ with $N\times \RR$ using $\nu$, and hence, the
second fundamental form of $N$ is simply a smooth family of symmetric
bilinear maps $\II_p\colon T_pN\times T_pN\to \RR$, $p\in N$. In
particular, we see that $\II$ defines a smooth tensor. See
\cite[Chapter~6]{doCarmo} for details.

\begin{definition}\label{hyp_bdd_geo} 
Let $(M^{m},g)$ be a Riemannian manifold of bounded geometry with a
hypersurface $N^{m-1}\subset M$ with a unit normal field $\nu$ to $N$.
We say that $N$ is a {\em bounded geometry} hypersurface if the
following conditions are fulfilled:
\begin{enumerate}[(i)]
\item $N$ is a closed subset of $M$;
\item\label{bg.sub} $(N, g|_N)$ is a manifold of bounded geometry;
\item The second fundamental form $\II$ of $N$ in $M$ and all its
  covariant derivatives along $N$ are bounded. In other words: 
  \begin{equation*}
   \Vert \left(\nabla^N\right)^k \II\Vert _{L^\infty}\leq C_k \ 
   \mbox{ for all }\    k\in \NN_0;
  \end{equation*}
\item There is a number $\delta>0$ such that $\exp^\perp\colon N\times
  (-\delta,\delta)\to M$ is injective.
\end{enumerate}
\end{definition}

See Definition \ref{def.gen.bg} for the definition of arbitrary
codimension submanifolds with bounded geometry.  We prove in
Section~\ref{ssec.control} that Axiom \eqref{bg.sub} of
Definition~\ref{hyp_bdd_geo} is redundant. In other words it already
follows from the other axioms. We keep Axiom~\eqref{bg.sub} in our
list in order to make the comparison with the definitions in
\cite{Schick.2001} and \cite{Grosse.Schneider.2013} more apparent.
  
\begin{definition}\label{def_bdd_geo}  
A Riemannian manifold~$(M,g)$ with (smooth) boundary has \emph{bounded
  geometry} if there is a Riemannian manifold $(\widehat M, \hat{g})$ with
bounded geometry satisfying
\begin{enumerate}[(i)]
\item $\dim \widehat M=\dim M$
\item $M$ is contained in $\widehat M$, 
in the sense that there is an isometric embedding $(M,g)\hookrightarrow 
(\widehat{M}, \hat{g})$
\item $\partial M$ is a bounded geometry hypersurface in $\widehat M$.
\end{enumerate}
\end{definition}

As unit normal vector field for $\partial M$ we choose the outer unit
normal field. Similar definitions were considered by \cite{AmannAnis,
  AmannFunctSp, Browder60, Schick.2001}. We will show in
Section~\ref{ssec.control} that our definition coincides with
\cite[Definition~2.2]{Schick.2001}. In this section we also discuss
further conditions, which are equivalent to the conditions in
Definition~\ref{def_bdd_geo}.  

Note that if $M$ is a manifold with boundary and bounded geometry,
then each connected component of $M$ is a complete metric space.  To
simplify the notation, we say that a Riemannian manifold is
\emph{complete}, if any of its connected components is a complete
metric space. The classical Hopf-Rinow theorem
\cite[Chapiter~7]{doCarmo} then tells us that a Riemannian manifold is
complete if, and only if, all geodesics can be extended indefinitely.

\begin{example}\label{ex.Lie.Man}
An important example of a manifold with boundary and bounded geometry
is provided by Lie manifolds with boundary \cite{antonini, sobolev,
  aln1}. 
\end{example}

Recall the definition of the sets $U_R(N)$ of Equation \eqref{eq.def.Ur}.
For the Poincar\'e inequality, we shall also need to assume that $M
\subset U_R(\partialDM)$, for some $R > 0$, and hence, in particular,
that $\partialDM\neq \emptyset$. We formalize this in the concept of
``manifold with finite width.''

\begin{definition}\label{def.finite.width}
  Let $(M,g)$ be a Riemannian manifold with boundary $\partial M$ and
  $A \subset \partial M$. We say that $(M, A)$ has
  \emph{finite width} if:
\begin{enumerate}[(i)]
\item $(M,g)$ is a manifold with boundary and bounded geometry, and
\item\label{cond.fw} $M \subset U_R(A)$, for some $R < \infty$.
\end{enumerate}
If $A = \pa M$, we shall also say that~$M$ \emph{has finite
  width}.
\end{definition}

Note that axiom \eqref{cond.fw} implies that $A$ intersects
all connected components of $M$, as $\dist(x,y) := \infty$ if $x$ and
$y$ are in different components of $M$. Also, recall that the meaning
of the assumption that $M \subset U_R(A)$ in this definition
is that every point of $M$ is at distance \emph{at most} $R$ to
$A$.

\begin{remark}[Hausdorff distance]
Recall that for subsets $A$ and $B$ of a metric space $(X,d)$ one
defines the \emph{Hausdorff distance} between $A$ and $B$ as
\begin{equation*}
  d_H(A,B) \ede \inf\{R>0\,|\, A\subset U_R(B)\text{ and }B\subset
  U_R(A)\}\in[0,\infty].
\end{equation*}
(See Equation \eqref{eq.def.BEr} for notation.) The assignment
$(A,B)\mapsto d_H(A,B)$ defines an extended metric on the set of all
closed subsets of $X$. Again, the word ``extended'' indicates that
$d_H$ satisfies the usual axioms of a metric, but that it takes values
in $[0,\infty]$ instead of $[0,\infty)$. See
  \cite[9.11]{berger:geometry} or \cite[\S 7/14]{rinow} for a
  definition and discussion of the Hausdorff distance on the set of
  compact subsets.  In this language, a pair $(M, A)$ as above
  has finite width if, and only if, $d_H(M, A)<\infty$.  The
  definition of the Hausdorff distance implies
  $d_H(\emptyset,M)=\infty$.
\end{remark}

\begin{example}
A very simple example of a manifold with finite width is obtained as
follows. We consider $M \define \Omega \times K$ where $\Omega$ is a
smooth, compact Riemannian manifold with smooth boundary and $K$ is a
Riemannian manifold with bounded geometry ($K$ has no boundary). Then
$M$ is a manifold with boundary and bounded geometry. Let
$\partialD \Omega$ be a union of connected components of $\pa \Omega$
that intersects every connected component of $\Omega$. Let $\partialDM
\define \partialD \Omega \times K$. Then $(M, \partialDM)$ is a
manifold with finite width.

On the other hand, let $\Omega \subset \RR^N$ be an open subset with
smooth boundary. We assume that the boundary of $\Omega$ coincides
with the boundary of a cone outside some compact set. Then $(\Omega,
\pa \Omega)$ is a manifold with boundary and bounded geometry, but it
does not have finite width. See also Example \ref{ex.Omega}.
\end{example}

An alternative characterization of manifolds with boundary and bounded
geometry is contained in the following theorem, that will be proven in
Section~\ref{ssec.control} using some preliminaries on the injectivity
radius that are recalled in Section~\ref{ssec.inj}

\begin{theorem}\label{theo_crit_bdd_geo}  
    Let $(M,g)$ be a Riemannian manifold with smooth boundary and
    curvature $R^M = (\nabla^M)^2$ and let $\II$ be the second
    fundamental form of the boundary $\pa M$ in $M$. Assume
    that:
\begin{enumerate}
  \item[(N)\,] There is $r_\partial>0$ such that
  \begin{align*} 
     \partial M \times [0, r_\partial) \to M,\ (x, t) \mapsto 
     \exp^\perp(x,t) \define \exp_x(t\nu_x)
  \end{align*}
  is a diffeomorphism onto its image.
  \item[(I)\ ] There is $\rinj(M) > 0$ such that for all $r \leq
    \rinj(M)$ and all $x\in M\setminus U_{r}(\partial M)$, the
    exponential map $\exp_x \colon B_r^{T_xM}(0)\subset T_xM \to M$
    defines a diffeomorphism onto its image.
  \item[(B)\,]  For every $k \geq0$, we have 
 \begin{align*}
  \Vert \nabla^k R^M\Vert_{L^\infty} < \infty \text{\ \ \ and\ \ \ }
  \Vert (\nabla^{\partial M})^k \II\Vert_{L^\infty}< \infty \,.
 \end{align*}
\end{enumerate}
Then $(M,g)$ is a Riemannian manifold with boundary and bounded
geometry in the sense of Definition~\ref{def_bdd_geo}.
\end{theorem}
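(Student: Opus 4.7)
\medskip

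\noindent\textbf{Proof plan.} The strategy is to construct an ambient Riemannian manifold $(\widehat{M}, \hat{g})$ without boundary, of the same dimension as $M$ and isometrically containing $(M,g)$, and then to verify that $\widehat M$ has bounded geometry and that $\partial M$ is a bounded-geometry hypersurface in $\widehat M$ in the sense of Definition~\ref{hyp_bdd_geo}. For the construction, axiom (N) lets me identify a tubular neighborhood of $\partial M$ in $M$ with $\partial M \times [0,r_\partial)$ via $\exp^\perp$, and in these Fermi coordinates the metric has the form $g = dt^2 + h_t$ for a smooth one-parameter family $h_t$ of Riemannian metrics on $\partial M$. I would then extend the family $h_t$ smoothly from $[0,r_\partial)$ to $(-r_\partial/2,\, r_\partial)$, preserving positive-definiteness and keeping uniform bounds on all $t$- and $\partial M$-covariant derivatives, and define $\widehat M \define M \cup_{\partial M}\bigl(\partial M \times (-r_\partial/2, 0]\bigr)$, endowed with the extended metric $dt^2 + h_t$ on the collar.

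To verify that $(\widehat M, \hat g)$ has bounded geometry, I would use axiom (B) directly on $M \setminus U_{r_\partial/4}(\partial M)$ and, on the tubular neighborhood, express $\nabla^k R^{\widehat M}$ in Fermi coordinates as universal algebraic expressions in $h_t$ together with its $t$- and $\partial M$-covariant derivatives; both axiom (B) and the construction of the extension give uniform control of these quantities, yielding totally bounded curvature on $\widehat M$. For the injectivity radius, axiom (I) directly handles points far from $\partial M$, while for points inside the extended collar one applies the standard comparison-geometry lower bound for manifolds with bounded sectional curvature and a uniform two-sided normal tubular neighborhood of $\partial M$ (of width $\ge r_\partial/2$ on each side) to conclude $\rinj(\widehat M)>0$. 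The axioms of Definition~\ref{hyp_bdd_geo} for $\partial M \subset \widehat M$ are then essentially immediate: $\partial M$ is closed in $\widehat M$; its second fundamental form in $\widehat M$ coincides (up to sign) with the one in $M$ computed with the outer unit normal, so all its covariant derivatives are bounded by~(B); and an injective collar of width $\delta = r_\partial/2$ was built into the construction.

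The main obstacle is producing the $C^\infty$ extension of $h_t$ across $t=0$ with uniform derivative bounds. A naive even reflection $h_{-t} \define h_t$ only yields a $C^{1,1}$ extension, because $\partial_t h_t|_{t=0}$ encodes (twice) the shape operator of $\partial M$ in $M$ and is nonzero in general, so the odd $t$-derivatives of the two sides fail to match. Instead, I would use a Seeley-type smooth extension operator on $[0,\infty)$ applied entrywise to the family of symmetric $2$-tensors $t \mapsto h_t$; this preserves $C^\infty$-smoothness and simultaneously converts the one-sided $L^\infty$ derivative bounds supplied by (B) into two-sided bounds. Positive-definiteness of the extended $h_t$ for $t$ slightly negative is then automatic by continuity, and after further cutting off (if needed) to interpolate to a standard product metric far from $\partial M$, the family stays uniformly positive-definite on all of $(-r_\partial/2, r_\partial)$. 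Once this analytic step is in hand, the remaining verifications reduce to standard Fermi-coordinate calculations and comparison arguments.
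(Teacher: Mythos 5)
Your overall strategy (building an ambient manifold $\widehat M$ by extending the Fermi collar and invoking Definition~\ref{def_bdd_geo}) is the same as the paper's, and your observation that a naive even reflection is only $C^{1,1}$ while a Seeley-type extension fixes the smoothness is a reasonable substitute for the paper's cut-off argument. However, there are two genuine gaps in the execution.

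First, your $\widehat M \define M \cup_{\partial M}\bigl(\partial M \times (-r_\partial/2, 0]\bigr)$ cannot have positive injectivity radius. As $t \to -r_\partial/2^+$ the exponential maps $\exp_p$ are only defined on balls of radius $\lesssim t + r_\partial/2 \to 0$, so $\rinj(\widehat M) = 0$. (Indeed $\rinj > 0$ forces geodesic completeness, and your $\widehat M$ is incomplete with an open end.) Cheeger's theorem, as well as any comparison lower bound for the injectivity radius, requires $\exp_p$ to be defined on a uniform ball, which fails near the artificial end, so the ``standard comparison-geometry lower bound'' you invoke does not apply there. You might hope to fix this by extending the collar to $(-\infty, 0]$, tapering the metric to a product $g_\partial + dt^2$ for $t$ very negative; but this brings us to the second gap. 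The paper avoids this issue by \emph{doubling} $M$: one glues two copies of $M$ along a sufficiently long collar (where the extension of $h_r$ is made symmetric, $h_r = h_{-1-r}$), so $\widehat M$ is automatically complete with no artificial end, and completeness is inherited from the two copies of $M$.

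Second, you do not verify condition~(ii) of Definition~\ref{hyp_bdd_geo}, namely that $(\partial M, g|_{\partial M})$ is itself a manifold of bounded geometry --- in particular that $\rinj(\partial M) > 0$. You list only the closedness of $\partial M$, the bounds on $\II$, and the two-sided injective collar as ``essentially immediate,'' but the intrinsic bounded geometry of $\partial M$ is not among the hypotheses (N), (I), (B) and is not automatic. This is precisely the non-trivial content of the paper's Lemmata~\ref{lem_subN} and~\ref{lem_pos_inj_N} (controlled submanifolds of bounded-geometry manifolds have bounded geometry), proved there via Cheeger's volume criterion applied to the normal exponential map. Moreover, this second gap undermines the patch for the first gap: to taper the collar metric to a product $g_\partial + dt^2$ on a semi-infinite collar and still have bounded geometry, you would need $g_\partial$ to have bounded geometry, i.e., you would need $\rinj(\partial M) > 0$ --- which is exactly what you have not yet established, and which, in the paper's logic, is a \emph{consequence} of $\partial M$ being a controlled submanifold of the doubled $\widehat M$, not a prerequisite for constructing $\widehat M$. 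You should supply a proof of $\rinj(\partial M) > 0$ (e.g., along the lines of Lemma~\ref{lem_pos_inj_N}) and replace the one-sided collar extension by the doubling construction or another device that produces a complete $\widehat M$ without presupposing that conclusion.
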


\begin{remark}\label{rem.low.ass}
The theorem implies in particular, that our definition of a manifold
with boundary and bounded geometry coincides with the one given by
Schick in \cite[Definition~2.2]{Schick.2001}. According to Schick's
definition, a manifold with boundary has bounded geometry if it
satisfies (N), (I) and (B) and if the boundary  itself has positive
injectivity radius. One of the statements in the theorem is that (N),
(I) and (B) imply that the boundary has bounded geometry.
\end{remark}

\begin{remark}
In \cite{botvinnik.mueller:p15} Botvinnik and M\"uller defined
manifolds with boundary with $(c,k)$-bounded geometry. Their
definition differs from our definition of manifolds with boundary and
bounded geometry in several aspects, in particular they only control
$k$ derivatives of the curvature.
\end{remark}

\subsection{Preliminaries on the injectivity radius}\label{ssec.inj}

We continue with some technical results on the injectivity radius.
Let $(M,g)$ be a Riemannian manifold without boundary and $p\in M$. We
write $\rinj(p)$ for the injectivity radius of $(M,g)$ at $p$ (that is
the supremum of all $r$ such that $\exp\colon B_r^{T_pM}(0)\subset
T_pM\to M$ is injective, as before).

We define the \emph{curvature radius} of $(M,g)$ at $p$  by
\begin{align*}
  \rho \define \sup \bigl\{ r>0\mid \exp_p \text{ is defined on $B_{\pi
      r}^{T_pM}(0)$ and $|\text{sec}|\leq 1/r^2$ on } B_{\pi
    r}^{M}(p)\bigr\}\,,
\end{align*}
where $\text{sec}$ is the sectional curvature and ``$|\text{sec}|\leq
1/r^2$ on $B_{\pi r}^{M}(p)$'' is the short notation for
``$|\text{sec}(E)|\leq 1/r^2$ for all planes $E\subset T_qM$ and all
$q\in B_{\pi r}^M(p)$''. It is clear that the set defining 
$\rho = \rho_p$ is not empty, and hence $\rho = \rho_p \in (0, \infty]$
is well defined.
Standard estimates in Riemannian geometry, see e.g. \cite{doCarmo},
show that the exponential map is an immersion on $B_{\pi
  \rho}^{M}(p)$. In other words, no point in $B_{\pi \rho}^{M}(p)$ is
conjugate to $p$ with respect to geodesics of length $<\pi\rho$.  

Let us notice that if there is a geodesic $\gamma$ from $p$ to $p$ of
length $2\delta>0$, then it is obvious that $\rinj(p)\leq \delta$.
The converse is true if $\delta$ is small:
\begin{lemma}[{\cite[Proposition~III.4.13]{sakai:96}}]
  \label{lem.loop.exists}
  If $\rinj(p)<\pi\rho$, then there is geodesic of length $2\rinj(p)$
  from $p$ to $p$.
\end{lemma}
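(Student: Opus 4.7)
The plan is to run the standard Klingenberg-type compactness and first-variation argument. Set $\ell \define \rinj(p)$; by hypothesis $\ell < \pi\rho$, so $\exp_p$ is an immersion on the closed ball $\overline{B_\ell^{T_pM}(0)}$ (no conjugate points occur up to radius $\pi\rho$, as recalled just before the statement). Consequently, the failure of $\exp_p$ to be a diffeomorphism on $B_r^{T_pM}(0)$ for $r$ slightly larger than $\ell$ must be a failure of \emph{global} injectivity, not a failure of being a local diffeomorphism.

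First I would produce two distinct unit-speed minimizing geodesics from $p$ to a common point $q$, both of length exactly $\ell$. By the sup-definition of $\rinj(p)$, for each $n$ there exist $v_n \neq w_n$ with $|v_n|, |w_n| < \ell + 1/n$ and $\exp_p(v_n) = \exp_p(w_n)$. Extracting convergent subsequences yields $v_n \to v$ and $w_n \to w$ with $\exp_p(v) = \exp_p(w) \define q$ and $|v|, |w| \le \ell$. Local injectivity of $\exp_p$ near the common value rules out $v = w$. To rule out $|v| < \ell$ (and symmetrically $|w| < \ell$), I would use the local-diffeomorphism property at $v$ to lift the curve $t \mapsto \exp_p(tw/|w|)$, for $t$ slightly below $\ell$, to a second preimage $v'(t)$ with $|v'(t)|$ close to $|v| < \ell$; both preimages would then lie in some $B_r^{T_pM}(0)$ with $r < \ell$, contradicting the sup-definition of $\rinj(p)$.

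Next I would show that the broken curve $\gamma_v \ast \gamma_w^{-1}$, with $\gamma_v(s) \define \exp_p(sv/\ell)$ and $\gamma_w(s) \define \exp_p(sw/\ell)$, is smooth at the corner $q$; this will exhibit it as a geodesic from $p$ to $p$ of length $2\ell = 2\rinj(p)$, as required. Set $\xi \define \gamma_v'(\ell)$ and $\eta \define \gamma_w'(\ell)$, both unit vectors in $T_qM$; smoothness of the concatenation is equivalent to $\xi = -\eta$. If instead $\xi + \eta \neq 0$, I would pick a smooth curve $c$ in $M$ with $c(0) = q$ and $c'(0) = -(\xi + \eta)$, and use the inverse function theorem at both $v$ and $w$ to lift $c$ to smooth families $v(r), w(r)$ with $v(0) = v$, $w(0) = w$, and $\exp_p(v(r)) = \exp_p(w(r)) = c(r)$. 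The first variation of arclength then yields
\begin{equation*}
\frac{d}{dr}|v(r)|\Big|_{r=0} = \langle -(\xi+\eta),\xi\rangle = -1 - \langle\xi,\eta\rangle = \langle -(\xi+\eta),\eta\rangle = \frac{d}{dr}|w(r)|\Big|_{r=0},
\end{equation*}
and this common value is strictly negative precisely because $\xi + \eta \neq 0$ and $|\xi| = |\eta| = 1$. Hence for small $r > 0$ the distinct vectors $v(r), w(r)$ satisfy $|v(r)|, |w(r)| < \ell$ and share the common image $c(r)$ under $\exp_p$, again contradicting $\ell = \rinj(p)$. Therefore $\xi = -\eta$, the concatenation is a smooth geodesic through $q$, and it is a geodesic of length $2\rinj(p)$ from $p$ to $p$.

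The main technical nuisance, in my view, is the intermediate claim $|v| = |w| = \ell$ in the first step: the sup-definition of the injectivity radius only directly contradicts distinct preimages inside \emph{open} balls of radius $< \ell$, so when a limiting norm lands exactly on the boundary sphere $\{|u|=\ell\}$ one has to carefully combine the local inverse of $\exp_p$ at the other (interior) point with continuity in order to push the second preimage strictly inside $B_\ell^{T_pM}(0)$. Once that is in place, the first-variation computation is routine, and the hypothesis $\ell < \pi\rho$ enters only through the absence of conjugate points on $\overline{B_\ell^{T_pM}(0)}$.
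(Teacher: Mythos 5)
Your proof is correct and is essentially the standard Klingenberg short-geodesic argument; the paper gives no proof of its own but simply cites Sakai's Proposition III.4.13, which proceeds along exactly these lines (compactness to extract two minimizing geodesics of length $\rinj(p)$ to a common point, then first variation to show the tangent vectors at the endpoint are antipodal so the concatenation is a smooth geodesic loop).
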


We will use the following theorem, due to Cheeger, see
\cite{cheeger:thesis}. We also refer to \cite[Sec. 10,
  Lemma~4.5]{petersen:98} for an alternative proof, which easily
generalizes to the version below.

\begin{theorem}[Cheeger]\label{thm_cheeger}
Let $(M,g)$ be a Riemannian manifold with curvature satisfying 
$|R^M|\leq K$. Let $A\subset
M$. We assume that there is an $r_0>0$ such that $\exp_p$ is defined
on $B_{r_0}^{T_pM}(0)$ for all $p\in A$.  Let $\rho>0$. Then
$\inf_{p\in A} \rinj(p)>0$ if, and only if, $\inf_{p\in A}
\vol(B_\rho^M(p))>0$.
\end{theorem}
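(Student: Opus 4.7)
The plan is to treat the two implications separately. For the direction ``$\inf_A \rinj > 0 \Rightarrow \inf_A \vol(B_\rho^M(\cdot)) > 0$'', set $r := \min(\inf_{p\in A}\rinj(p), \rho) > 0$. For each $p \in A$ the map $\exp_p \colon B_r^{T_pM}(0) \to B_r^M(p)$ is a diffeomorphism, and the curvature bound $|R^M|\leq K$ together with Rauch's comparison theorem supplies a uniform lower bound $c(r, K) > 0$ for its Jacobian on $B_r(0)$. Integrating gives $\vol(B_\rho^M(p)) \geq \vol(B_r^M(p)) \geq c(r, K)\,\omega_m r^m/m > 0$, as desired.

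For the reverse direction I would argue by contradiction. Assume there is a sequence $p_n \in A$ with $i_n := \rinj(p_n) \to 0$ while $\vol(B_\rho^M(p_n)) \geq v_0 > 0$. The curvature bound together with the assumption that $\exp_p$ is defined on $B_{r_0}^{T_pM}(0)$ for all $p \in A$ yields a uniform positive lower bound $\rho_\ast$ on the curvature radius at every $p \in A$. For $n$ large enough that $i_n < \pi\rho_\ast$, Lemma \ref{lem.loop.exists} produces a geodesic loop $\gamma_n$ based at $p_n$ of length $L_n := 2 i_n \to 0$. By Klingenberg's classical observation, such a shortest loop is smoothly closed, so $\gamma_n$ is in fact a closed geodesic through $p_n$.

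The crux is to force $\vol(B_\rho^M(p_n)) \to 0$. The cleanest approach is a rescaling argument: the metric $\tilde g_n := L_n^{-2} g$ turns $\gamma_n$ into a closed geodesic of length $2$, reduces the curvature bound to $|R^{\tilde g_n}| \leq K L_n^2 \to 0$, enlarges the domain of $\exp_{p_n}$ to a ball of radius $r_0/L_n \to \infty$, and converts the volume hypothesis into $\vol_{\tilde g_n}(B_{\rho/L_n}^{\tilde g_n}(p_n)) \geq v_0 L_n^{-m}$. By Gromov's compactness theorem a subsequence of $(M, \tilde g_n, p_n)$ converges in the pointed $C^{1,\alpha}$-topology to a complete, flat, pointed Riemannian manifold $(N, \tilde h, q)$ still carrying a closed geodesic of length $2$ through $q$. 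Since $N$ must then be a flat quotient $\RR^m/\Gamma$ with $\Gamma$ containing a translation of length $2$, one reads off the upper bound $\vol_{\tilde h}(B_R^{\tilde h}(q)) \leq C_m R^{m-1}$ for large $R$. Combining the two estimates, after transferring the lower bound through Bishop--Gromov monotonicity, gives $v_0 L_n^{-m} \leq C_m (\rho/L_n)^{m-1}$, i.e., $L_n \geq v_0/(C_m \rho^{m-1}) > 0$, contradicting $L_n \to 0$.

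The main obstacle is making this rescaled-limit step fully rigorous, since volume is only upper semi-continuous under pointed Gromov convergence and one must arrange the lower bound to survive. A direct, self-contained alternative works entirely inside $T_{p_n}M$: the closed geodesic $\gamma_n$ supplies, via iteration, a $\ZZ$-family of preimages $k v_n = k L_n \dot\gamma_n(0)$ of $p_n$ under $\exp_{p_n}$; propagating by parallel transport along $\gamma_n$ shows that $\exp_{p_n}\colon B_\rho(0) \to M$ has multiplicity at least of order $\rho/L_n$ over a positive-measure subset of $B_\rho^M(p_n)$, and the coarea formula together with Bishop's upper bound on the Jacobian then yields $\vol(B_\rho^M(p_n)) = O(L_n) \to 0$, the required contradiction.
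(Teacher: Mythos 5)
The paper does not prove this theorem; it cites Cheeger's thesis and Petersen's book (Ch.~10, Lemma~4.5) and remarks that the argument there ``easily generalizes.'' So there is no in-paper proof to compare against, and your proposal has to stand on its own. Your easy direction is essentially fine (take $r<\min(\inf_A\rinj,\rho,\pi/\sqrt{K})$ so that G\"unther/Rauch gives a genuine Jacobian lower bound), and your identification of a uniform lower bound $\rho_\ast=\min(r_0/\pi,1/\sqrt{K})$ on the curvature radius, feeding into Lemma~\ref{lem.loop.exists}, is correct.

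The hard direction, however, has a real gap right at the start: the geodesic loop produced by Lemma~\ref{lem.loop.exists} is \emph{not} smoothly closed in general. Klingenberg's lemma only guarantees that the two minimizing geodesics from $p_n$ to the nearest cut point $q_n$ meet smoothly at $q_n$; the loop may have a corner at $p_n$. Smooth closedness holds when $p_n$ (locally) minimizes the injectivity radius, which is exactly what cannot be assumed here, since $\inf_A\rinj$ is not attained. Both of your downstream arguments lean on this false step: without a closed geodesic there is no $\ZZ$-family of preimages (after time $L_n$ the geodesic returns to $p_n$ with velocity $\dot\gamma_n(L_n)\ne\dot\gamma_n(0)$ and wanders off, so iteration fails), and in the rescaled limit you cannot assert a closed geodesic of length $2$ either. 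Even granting a closed geodesic, the multiplicity/coarea sketch bounds only the volume of a thin tube of measure $\sim\epsilon^{m-1}L_n$, not $\vol(B_\rho^M(p_n))$; the usual route (Cheeger--Ebin/Heintze--Karcher) instead covers $B_\rho^M(p_n)\subset B_\rho^M(\gamma_n)$ by the normal exponential map of $\gamma_n$, giving $\vol(B_\rho^M(p_n))\le C(m,K,\rho)\,L_n$ directly, but this again needs the loop to be a closed geodesic. Finally, the rescaling route has precisely the obstruction you flag: pointed $C^{1,\alpha}$-convergence requires a uniform injectivity radius lower bound on a \emph{neighborhood} of $p_n$, which is the very thing being proved, so the sequence may collapse and the limit need not be a manifold at all. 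To repair the argument one must first produce a genuinely closed short geodesic, for instance by minimizing a localized functional such as $x\mapsto\rinj(x)/\dist(x,\pa B^M_{\rho/2}(p_n))$ to find a nearby interior local minimizer of $\rinj$, and then run the Heintze--Karcher tube estimate.
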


\subsection{Controlled submanifolds
  are of bounded geometry}\label{ssec.control}

The goal of this subsection is to prove
Theorem~\ref{theo_crit_bdd_geo}. For further use, some of the Lemmata
below are formulated not just for hypersurfaces, such as $\pa M$, but 
also for submanifolds of higher codimension.

Let again $N^n$ be a submanifold of a Riemannian manifold $M^m$ with
second fundamental form $\II$ and normal exponential map $\exp^\perp$
as recalled in \eqref{eq.def.exp.perp} and \eqref{eq.def.II}. For
$X\in T_pN$ and $s\in \Gamma(\nu_N)$, one can decompose $\nabla_X s$
in the $T_pN$ component (which is given by the second fundamental
form) and the normal component $\nabla^{\perp}_X s$. This definition
gives us a connection $\nabla^\perp$ on the bundle $\nu_N\to N$.  We
write $R^\perp$ for the associated curvature.

\begin{definition}\label{def.controlled}
Let $M$ be a Riemannian manifold without boundary. We say that a
closed submanifold $N\subset M$ with second fundamental form $\II$ is
\emph{controlled} if $\Vert (\nabla^{N})^k \II\Vert_{L^\infty} <
\infty$ for all $k\geq 0$ and there is $r_\partial>0$ such that
\begin{align*}
   \exp^\perp\colon V_{r_\pa}(\nu_N) \to M
\end{align*}
is injective, where $V_{r}(\nu_N)$ is the set of all vectors in
$\nu_N$ of length $<r$.
\end{definition}

We will show that the geometry of a controlled submanifold is
``bounded.'' Thus, ``controlled'' should be seen here as an auxiliary
label and we will change the name ``controlled submanifold'' to
\emph{``bounded geometry submanifold.''}

\begin{lemma}\label{lem_extend_M}
For every Riemannian manifold $(M,g)$ with boundary and bounded
geometry, there is a complete Riemannian manifold $\widehat M$ {\em
  without boundary} such that:
\begin{enumerate}[(i)]
 \item $\Vert \nabla^k R^{\hat{M}}\Vert_{L^\infty} <
\infty$, for all $k\geq 0$;
 \item $M\to \widehat M$ is an isometric
embedding, and
 \item $\pa M$ is a controlled submanifold of
$\widehat{M}$.
\end{enumerate}
\end{lemma}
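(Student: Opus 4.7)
The plan is to take $\widehat M$ directly from Definition~\ref{def_bdd_geo}, which already furnishes such a manifold, and verify that the three listed properties follow from the axioms built into that definition combined with Definition~\ref{hyp_bdd_geo}.

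By Definition~\ref{def_bdd_geo}, there exists $(\widehat M, \hat g)$ of the same dimension as $M$ and of bounded geometry (in the boundaryless sense), together with an isometric embedding $M \hookrightarrow \widehat M$ making $\partial M$ a bounded geometry hypersurface of $\widehat M$ in the sense of Definition~\ref{hyp_bdd_geo}. Item~(ii) of the conclusion is then built in, while item~(i) is exactly the totally bounded curvature condition on $\widehat M$ from Definition~\ref{def_ttly_bdd_curv}, which is part of $\widehat M$ having bounded geometry.

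Completeness of $\widehat M$ is not stated verbatim in Definition~\ref{def_bdd_geo}, so I would derive it from the positivity of $\rinj(\widehat M)$: if $\rinj(\widehat M) \geq r_0 > 0$, then on each connected component every unit-speed geodesic can be extended a uniform distance $r_0$ past any of its points, so geodesics extend indefinitely, and Hopf--Rinow yields metric completeness of each component.

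For item~(iii) --- that $\partial M$ is a controlled submanifold of $\widehat M$ in the sense of Definition~\ref{def.controlled} --- the plan is to match the axioms directly. The derivative bound $\|(\nabla^{\partial M})^k \II\|_{L^\infty} < \infty$ is axiom~(iii) of Definition~\ref{hyp_bdd_geo}. For injectivity of $\exp^\perp$ on a uniform tubular neighborhood $V_{r_\partial}(\nu_{\partial M})$, I would use that $\nu_{\partial M}$ has rank one and is trivialized by the outward unit normal field $\nu$; hence $V_\delta(\nu_{\partial M})$ is identified with $\partial M \times (-\delta, \delta)$ via $(x,t)\mapsto t\nu_x$, and axiom~(iv) of Definition~\ref{hyp_bdd_geo} then supplies the required injectivity. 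I do not anticipate a real obstacle: the argument is essentially a dictionary between Definitions~\ref{def_bdd_geo}--\ref{hyp_bdd_geo} and the conclusion, and the only point requiring any care is the deduction of completeness from positive injectivity radius.
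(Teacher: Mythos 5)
Your proof is logically correct: the conclusion is nearly a restatement of Definition~\ref{def_bdd_geo}, completeness follows from positive injectivity radius via Hopf--Rinow (a unit-speed geodesic can always be prolonged a uniform distance past any point), and axioms (iii)--(iv) of Definition~\ref{hyp_bdd_geo} match the two clauses of Definition~\ref{def.controlled} once the rank-one normal bundle $\nu_{\partial M}$ is trivialized by the unit normal. The paper, however, deliberately takes a different route: it builds a \emph{new} $\widehat M$ by doubling $M$ along an extended collar --- it extends the collar metric on $\partial M \times [0, r_\partial)$ to a cylinder $\partial M \times (-1-r_\partial, r_\partial)$ with a metric symmetric under $r \mapsto -1-r$ and with all curvature derivatives bounded, and then glues two copies of $M$ along this cylinder. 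The reason is not to prove the stated lemma more cleverly (you are right that it is essentially automatic) but to produce a construction that uses only the intrinsic data (N) and (B). This matters because the very next Lemma~\ref{lem_widehatM_inj} is stated under the hypotheses (N), (I), (B) of Theorem~\ref{theo_crit_bdd_geo}, which do \emph{not} presuppose a pre-existing ambient $\widehat M$ as in Definition~\ref{def_bdd_geo} (establishing that such a $\widehat M$ exists is precisely the content of Theorem~\ref{theo_crit_bdd_geo}), and it refers explicitly to ``the manifold $\widehat M$ as constructed in the proof of the last lemma.'' So your tautological route proves the lemma as stated, but it removes the scaffolding that Lemma~\ref{lem_widehatM_inj} --- and hence the proof of Theorem~\ref{theo_crit_bdd_geo} --- relies on.
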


\begin{proof}
The metric $(\exp^\perp)^*g$ on $\partial M \times [0, r_\partial)$ is
  of the form $(\exp^\perp)^*g=h_r + dr^2$ where $r\in [0,
 r_\partial)$ and where $h_r$, $r\in [0, r_\partial)$, is a family
of metrics on $\partial M$ such that $(\partial/\partial r)^k h_r$ is
a bounded tensor for any $k\in \NN$.  Using a cut-off argument, it is
possible to define $h_r$ also for $r\in (-1-r_\partial,0)$ in such a
way that all $(\partial/\partial r)^k h_r$ are bounded tensors and
$h_r=h_{-1-r}$ for all $r\in (-1-r_\partial,r_\partial)$.  An
immediate calculation shows that then $(\partial M \times
(-1-r_\partial, r_\partial),h_r + dr^2)$ has bounded curvature~$R$,
and all derivatives of~$R$ are bounded. We then obtain $\widehat M$ by
gluing together two copies of $M$ together along $\partial M \times
(-1-r_\partial, r_\partial)$. Obviously the curvature and all its
derivatives are bounded on $\widehat M$.
\end{proof}

\begin{lemma}\label{lem_widehatM_inj}
Let $(M,g)$ be a complete Riemannian manifold with boundary $\partial
M$ satisfying conditions (N), (I), and (B) introduced in
Theorem~\ref{theo_crit_bdd_geo}. Then the injectivity radius of the
manifold $\widehat{M}$ as constructed in the proof of the last lemma
is positive.
\end{lemma}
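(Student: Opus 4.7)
The plan is to apply Cheeger's theorem (Theorem~\ref{thm_cheeger}) to $\widehat{M}$. By Lemma~\ref{lem_extend_M}(i) and hypothesis (B), the curvature of $\widehat{M}$ satisfies $|R^{\widehat{M}}|\le K<\infty$ uniformly, and $\widehat{M}$ is complete (each connected component of $M$ is complete by (I), (N), and the completeness of $g$, and the symmetric gluing through the extended collar preserves completeness), so $\exp_{\hat p}^{\widehat{M}}$ is defined on all of $T_{\hat p}\widehat{M}$ for every $\hat p\in\widehat{M}$. By Cheeger's theorem it therefore suffices to exhibit $\rho,v_0>0$ with $\vol\bigl(B_\rho^{\widehat{M}}(\hat p)\bigr)\ge v_0$ uniformly in $\hat p$.

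I would fix $\rho\le\tfrac13\min\{\rinj(M),r_\pa\}$ and distinguish two cases according to $\dist(\hat p,\pa M)$. In the \emph{interior case}, when $\dist(\hat p,\pa M)\ge 3\rho$, the point $\hat p$ lies in a single isometric copy of $M$ inside $\widehat{M}$ and $B_\rho^{\widehat{M}}(\hat p)$ avoids $\pa M$, so in particular the metric on this ball agrees with the original metric on $M$. Hypothesis (I) then gives that $\exp_{\hat p}$ is a diffeomorphism on $B_\rho^{T_{\hat p}M}(0)$, and Bishop's volume comparison, using the sectional curvature bound derived from $|R^M|\le K$, supplies a uniform lower bound $v_A>0$ depending only on $K$, $m$, and $\rho$. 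In the \emph{boundary case}, when $\dist(\hat p,\pa M)<3\rho$, I would use the Fermi coordinate chart $\Phi\colon\pa M\times(-r,r)\to U\subset\widehat{M}$ provided by the symmetric extension of (N), with pulled-back metric $\Phi^*\hat g=h_t+dt^2$. The construction in Lemma~\ref{lem_extend_M} combined with (B) gives $h_t$ and all its $t$-derivatives uniformly bounded on $\pa M$, so there is a uniform bilipschitz constant $c>0$ with $B_\rho^{\widehat{M}}(\hat p)\supset \Phi\bigl(B_{c\rho}^{h_0}(x_0)\times(t_0-c\rho,t_0+c\rho)\bigr)$ whenever $\hat p=\Phi(x_0,t_0)$. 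The desired lower volume bound then reduces to a uniform lower bound on $\vol_{h_0}\bigl(B_{c\rho}^{h_0}(x_0)\bigr)$ for $x_0\in\pa M$.

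The main obstacle is this last step: the Gauss equation and (B) equip $(\pa M,h_0)$ with uniformly bounded curvature, but a uniform lower bound on small-ball volumes still requires ruling out arbitrarily small injectivity radii on $\pa M$, which is not yet part of the hypotheses. The key idea that breaks the apparent circularity is to transfer any hypothetical short geodesic loop on $\pa M$ based at $x_0$ into the interior via the Fermi translate $\Phi(\,\cdot\,,r_\pa/2)$, which is a uniform bilipschitz embedding of a neighborhood of $x_0$ in $\pa M$ onto an open subset of $M$ lying at distance $\ge r_\pa/2$ from $\pa M$. Inside the injectivity ball furnished by the interior case at $\Phi(x_0,r_\pa/2)$, any such translated (piecewise) loop of sufficiently small length is forced to be trivial, which in turn excludes short loops on $\pa M$ and produces the required uniform lower bound $v_B>0$ via Bishop's comparison on $\pa M$. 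Setting $v_0:=\min(v_A,v_B)$, Theorem~\ref{thm_cheeger} concludes the proof.
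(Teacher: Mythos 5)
Your overall strategy is the right one and coincides with the paper's: apply Cheeger's criterion (Theorem~\ref{thm_cheeger}), split into an interior case (where (I) plus a curvature bound gives uniform ball volumes directly) and a near-boundary case, and in the latter translate data into the interior along the collar. Your interior case is fine. The divergence, and the gap, is in how you handle the near-boundary case.

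The paper translates the \emph{full $m$-dimensional ball} $B_r^{\widehat M}(q)$ into the interior. Writing $q=(x,t)$ in the collar, it uses the $t$-shift $f_q\colon \pa M\times(t-r,t+r)\to\pa M\times(0,2r)\subset M$, which is uniformly bilipschitz (constant $C_1$) because the family $h_s$ and its $s$-derivatives are bounded. Since $f_q(q)$ lies at distance $\ge r$ from $\pa M$, Cheeger applied to $M\setminus U_r(\pa M)$ bounds $\vol(B^M_{r/C_1}(f_q(q)))\ge v$, and $B^M_{r/C_1}(f_q(q))\subset f_q(B^{\widehat M}_r(q))$ then gives $\vol(B^{\widehat M}_r(q))\ge C_1^{-m}v$. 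Applying Cheeger once more finishes. At no point does the argument touch $(m-1)$-dimensional ball volumes in $(\pa M,h_0)$ or the injectivity radius of $\pa M$ (that is done separately, for arbitrary codimension, in Lemma~\ref{lem_pos_inj_N}).

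You instead factor $B_\rho^{\widehat M}(\hat p)\supset\Phi(B^{h_0}_{c\rho}(x_0)\times(t_0-c\rho,t_0+c\rho))$ and reduce to bounding $\vol_{h_0}(B^{h_0}_{c\rho}(x_0))$ from below. You correctly flag that this needs a lower bound on $\rinj(\pa M,h_0)$. The proposed fix, however, does not work. Transporting a geodesic loop of $(\pa M,h_0)$ via $x\mapsto\Phi(x,r_\pa/2)$ produces a short closed curve in $M$ that is \emph{not} a geodesic of $M$, only a bilipschitz image of one. A large injectivity radius of $M$ at $\Phi(x_0,r_\pa/2)$ only forbids short \emph{geodesic} loops of $M$; short smooth closed curves (which are of course contractible inside a normal ball) coexist perfectly well with any lower bound on $\rinj^M$ — think of a hypersurface with tiny handles embedded in $\RR^m$. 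So ``forced to be trivial'' (contractible) does not ``exclude short loops on $\pa M$.'' This is exactly the pitfall that Cheeger's volume criterion is designed to get around, and it is why the paper works with $m$-dimensional ball volumes throughout rather than ever descending to $\pa M$. If you want to salvage your reduction, the correct move is to push the whole $m$-dimensional product region $\Phi(B^{h_0}_{c\rho}(x_0)\times(t_0-c\rho,t_0+c\rho))$ into the interior by a $t$-shift and compare its $m$-volume with an interior ball of $M$ — which is precisely the paper's map $f_q$, rendering the passage to $\pa M$ unnecessary.
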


\begin{proof}
We use (I) for $r\define \frac{1}{2}\min \{ \rinj(M), r_\pa\}$ to
obtain that $\inf_{q\in M\setminus U_r(\pa M)} \rinj(q)>0$. It thus
remains to show that $\inf_{q\in \pa M\times [-1-r,r]} \rinj(q)>0$.
Let $q=(x,t)\in \pa M \times [-1-r,r]$. We define the diffeomorphism
$f_q\colon \pa M\times (t-r, t+r)\to \pa M\times (0,2r)\subset M$,
$(y,s)\mapsto (y,s-t+r)$. Then the operator norms $\Vert (df_q)\Vert$
and $\Vert (df_q)^{-1}\Vert$ are bounded on their domains by a bound
$C_1\geq 1$ that only depends on $(M,g)$ and the chosen extension
$\widehat{M}$, but not on~$q$.  Theorem~\ref{thm_cheeger} for
$\rho=r/C_1$ gives $v>0$ such that $\vol(B_{r/C_1}^M(z))>v$ for all
$z\in M\setminus U_r(\pa M)$.  Together with $B_{r/C_1}^{ M}(f_q(q))
\subset f_q(B_{r}^{\widehat M}(q)) \subset \pa M \times (0,2r)$ we get
$$
 \vol (B_{r}^{\widehat M}(q))\geq C_1^{-m} \vol (f_q(B_{r}^M(q))) \geq
 C_1^{-m} \vol (B_{r/C_1}^M(q))\geq C_1^{-m}v .$$
Using again
Theorem~\ref{thm_cheeger} for $\rho=r$ we obtain the required
statement.
\end{proof}

\begin{lemma}\label{lem_subN}
Let $(M,g^M)$ be a Riemannian manifold without boundary and with
totally bounded curvature (Definition \ref{def_ttly_bdd_curv}).
Let $N\subset M$ be a submanifold with
$\Vert (\nabla^N)^k \II\Vert_{L^\infty} < \infty$ for all $k\geq 0$.
Then
  \begin{enumerate}[(i)]
   \item $\Vert (\nabla^N)^k R^N\Vert_{L^\infty} < \infty$ for all
     $k\geq 0$.
   \item The curvature of the normal bundle of $N$ in $M$ and all its
     covariant derivatives are bounded.  In other words $\Vert
     (\nabla^N)^kR^{\perp}\Vert _{L^\infty}\leq c_k$ for all $k\in
     \NN_0$.
   \end{enumerate}
\end{lemma}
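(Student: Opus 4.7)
The strategy is to use the Gauss and Ricci equations to express $R^N$ and $R^\perp$ algebraically in terms of the ambient curvature $R^M$ restricted to $N$ and the second fundamental form $\II$, and then to propagate these identities to all orders of covariant derivatives. Both the $L^\infty$-bounds of $(\nabla^M)^j R^M$ (by assumption on $M$) and of $(\nabla^N)^l \II$ (by hypothesis) will plug into the resulting identities to produce the required bounds.

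For $k=0$, the Gauss equation gives
\begin{equation*}
 \langle R^N(X,Y)Z, W\rangle \seq \langle R^M(X,Y)Z, W\rangle - \langle \II(X,Z), \II(Y,W)\rangle + \langle \II(X,W), \II(Y,Z)\rangle
\end{equation*}
for $X,Y,Z,W \in TN$, so pointwise $|R^N| \le |R^M| + C|\II|^2$, and both quantities on the right are uniformly bounded. The Ricci equation expresses $R^\perp(X,Y)\xi$ algebraically in terms of the normal part of $R^M(X,Y)\xi$ and a bilinear expression in $\II$ (via the shape operator $A$ determined by $\langle A_\xi X, Y\rangle = \langle \II(X,Y), \xi\rangle$), proving (ii) for $k=0$.

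For the inductive step I would use the orthogonal decomposition $TM|_N = TN \oplus \nu_N$ together with the Gauss and Weingarten formulas
\begin{equation*}
 \nabla^M_X Y \seq \nabla^N_X Y + \II(X,Y), \qquad \nabla^M_X \xi \seq -A_\xi X + \nabla^\perp_X \xi,
\end{equation*}
valid for tangential $X,Y$ and normal $\xi$. Denote by $\widetilde\nabla$ the connection on $TM|_N = TN\oplus\nu_N$ built from $\nabla^N$ and $\nabla^\perp$. For any tensor field $T$ on $M$ and $X \in TN$, these formulas yield, applied entry by entry,
\begin{equation*}
 (\nabla^M_X T)|_N \seq \widetilde\nabla_X (T|_N) + P_X(T|_N,\II),
\end{equation*}
where $P_X$ is a universal bilinear expression. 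Applying this repeatedly, one obtains $(\nabla^N)^k R^N$ (resp.~the analogous iterated derivative of $R^\perp$) as a finite sum of contractions of tensors $(\nabla^M)^j R^M|_N$ with $j\le k$ and $(\nabla^N)^l \II$ with $l\le k$, with universal polynomial coefficients. Each factor is uniformly bounded, hence so is the whole expression.

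The main technical point, and the only real obstacle, is the bookkeeping needed to commute the ambient covariant derivative $\nabla^M$ past the restriction to $N$ and past contractions, each iteration producing additional factors of $\II$ or of the shape operator $A$. Because the hypotheses simultaneously control \emph{all} derivatives of $R^M$ on $M$ and all $\nabla^N$-derivatives of $\II$ on $N$, the induction incurs no loss of regularity at any order and closes cleanly to give both (i) and (ii).
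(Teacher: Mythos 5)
Your proof is correct and takes essentially the same route as the paper: the Gauss and Ricci equations for the base case, followed by an iterated covariant differentiation argument (via the Gauss--Weingarten decomposition) expressing $(\nabla^N)^k R^N$ and $(\nabla^N)^k R^\perp$ as universal polynomials in the bounded quantities $(\nabla^M)^j R^M|_N$ and $(\nabla^N)^l\II$. The only difference is presentational: for part (i) the paper cites \cite[Lemma~4.5]{Grosse.Schneider.2013} rather than writing out the Gauss-equation argument, and for part (ii) it states the inductive bookkeeping more tersely, merely observing that the difference between $\nabla^M$ and $\nabla^N$ contributes terms linear in $\II$, which is exactly the identity you make explicit.
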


\begin{proof}
 (i) This is \cite[Lemma~4.5]{Grosse.Schneider.2013}.

(ii) For a normal vector field $\eta$ let $W_\eta$ be the Weingarten
  map for $\eta$, i.e.,\ for $X\in T_pN$, let $W_\eta(X)$ be the
  tangential part of $-\nabla_X \eta$.  Thus
  $g^M(\II(X,Y),\eta)=g^M(W_\eta(X),Y)$ for the vector-valued second
  fundamental form $\II$.  The Ricci equation \cite[p. 5]{BCC} states
  that the curvature $R^{\perp}$ of the normal bundle is
\begin{align}\label{codazzi-or-mainardi}
  g^M(R^\perp(X,Y)\eta,\zeta)&= g^M(R^M(X,Y)\eta,\zeta) -
  g^M(W_\eta(X),W_\zeta(Y))\\ &\phantom{= }
  +g^M(W_\eta(Y),W_\zeta(X)), \nonumber
\end{align}
where $X,Y\in T_pN$, and where $\eta,\zeta\in T_pM$ are normal to $N$.
The boundedness of $R^M(X,Y)$ and $\II$ thus implies the boundedness
of of $R^\perp$. In order to bound $(\nabla^N)^kR^\perp$, one has to
differentiate \eqref{codazzi-or-mainardi} covariantly. The difference
between $\nabla$ and $\nabla^N$ provides then additional terms that
are linear in $\II$. Thus, one iteratively sees that
$(\nabla^N)^kR^\perp$ is a polynomial in the variables $\II$,
$\nabla^N\II$, \ldots, $(\nabla^N)^k\II$, $R$, $\nabla R^M$, \ldots,
$\nabla^kR^M$, and thus that it is bounded.
\end{proof}

The total space of the normal bundle $\nu_N$ carries a natural
Riemannian metric. Indeed, the connection $\nabla^\perp$ defines a
splitting of the tangent bundle of this total space into the vertical
tangent space (which is the kernel of the differential of the base
point map that maps every vector to its base point) and the horizontal
tangent space (given by the connection).  The horizontal space of
$\nu_N$ inherits the Riemannian metric of $N$ and the vertical tangent
spaces of $\nu_N$ are canonically isomorphic to the fibers of the
normal bundle $\nu_N$ and they carry the canonical metric. Let
$\pi\colon \nu_N\to N$ be the base point map. Then $\pi$ is a
Riemannian submersion.

Let $F\define \exp^\perp|_{V_{r_1}(\nu_N)}$ (see Definition
\ref{def.controlled} for notation). It is clear from the definition
of the normal exponential map that the bounds on $\nabla^kR^M$ and on
$(\nabla^{N})^k\II$ imply that there is an $r_1\in (0,r_\pa)$ such
that the derivative $dF$ and its inverse $(dF)^{-1}$ are uniformly
bounded.

\begin{lemma}\label{lem_pos_inj_N}
Let $M^m$ be a Riemannian manifold without boundary and of bounded
geometry. Let $N^n$ be a controlled submanifold of $M$ (Definition
\ref{def.controlled}).  Then the injectivity radius of $N$ is
positive.
\end{lemma}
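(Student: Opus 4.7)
The plan is to apply Cheeger's theorem (Theorem~\ref{thm_cheeger}) to $N$ itself. This reduces the question to three ingredients: (a) an upper bound on the intrinsic curvature of $N$; (b) a uniform $r_0 > 0$ such that $\exp^N_p$ is defined on $B_{r_0}^{T_pN}(0)$ for every $p\in N$; and (c) a uniform positive lower bound $\vol(B_\rho^N(p)) \ge v_0 > 0$ for some fixed $\rho > 0$ and all $p\in N$.

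Ingredient (a) is immediate from Lemma~\ref{lem_subN}(i). For (b), I would use that a unit-speed geodesic $\gamma$ in $N$ has ambient acceleration $\nabla^M_{\dot\gamma}\dot\gamma = \II(\dot\gamma,\dot\gamma)$ uniformly bounded by $\Vert \II\Vert_\infty$. Since $M$ is complete (as a manifold of bounded geometry) and $N$ is closed in $M$, the lift $(\gamma,\dot\gamma)$ has finite length in $TM$ on any bounded time interval, so $\gamma$ converges at any finite endpoint to a point of $N$ with a well-defined velocity in $TN$, and therefore extends to all of $\RR$. Hence $N$ is geodesically complete and (b) holds with any $r_0 > 0$.

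The main step is (c), which I would obtain from the tubular neighborhood $F = \exp^\perp\colon V_{r_1}(\nu_N) \to M$. Recall from the discussion preceding the statement that $F$ is a diffeomorphism onto its image with $\Vert dF\Vert$ and $\Vert (dF)^{-1}\Vert$ uniformly bounded by some $C\ge 1$. Equip the total space of $\nu_N$ with its natural Riemannian submersion metric $g_0$ coming from the connection $\nabla^\perp$; in this metric the zero section is totally geodesic and isometric to $N$ (by the fiberwise involution $v\mapsto -v$), each fiber is a flat copy of $\RR^{m-n}$, the projection $\pi\colon\nu_N\to N$ is $1$-Lipschitz, and the function $(q,v)\mapsto |v|$ is also $1$-Lipschitz (because $\nabla^\perp$ is metric). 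The comparison $C^{-2}g_0 \le F^*g^M \le C^2 g_0$, combined with the bounded-geometry volume lower bound on $M$, then yields, for some small $s>0$ and uniformly in $p\in N$,
\begin{equation*}
 \vol(B_s^{g_0}(0_p)) \ \ge\ C^{-m}\,\vol(B_{s/C}^M(p)) \ \ge\ v_M \ >\ 0\,.
\end{equation*}
Since $B_s^{g_0}(0_p)$ projects into $B_s^N(p)$ and meets each fiber $\nu_{N,q}$ in a subset of $(m-n)$-dimensional volume $\le \omega_{m-n}s^{m-n}$, the Riemannian submersion volume formula $d\vol^{g_0} = d\vol^N\,d\vol^{\text{fiber}}$ gives $\vol(B_s^N(p)) \ge v_M/(\omega_{m-n}s^{m-n}) > 0$ uniformly in $p$. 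This verifies~(c), and Cheeger's theorem then yields $\rinj(N) > 0$. The main obstacle is step (c): one must set up the natural metric $g_0$ on the total space of $\nu_N$ carefully and justify the Fubini-type volume identity for the submersion $\pi$, ensuring that all comparisons are uniform in $p\in N$.
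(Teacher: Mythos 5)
Your proof is correct and follows essentially the same route as the paper's: apply Cheeger's theorem (Theorem~\ref{thm_cheeger}) to $N$, after using the tubular-neighbourhood map $F=\exp^\perp|_{V_{r_1}(\nu_N)}$, its bilipschitz bounds, and the Riemannian submersion $\pi\colon\nu_N\to N$ to push a uniform volume lower bound from $M$ down to $N$. The only cosmetic difference is that the paper works directly with the cylinder $B(q)=V_{r_1}(\nu_N)\cap\pi^{-1}(B^N_{r_1}(q))$ rather than your metric ball $B^{g_0}_s(0_p)$, which is marginally more economical but yields the same estimate via the same two ingredients (the bound $\vol(F(A))\le C^m\vol(A)$ and the Fubini formula for the submersion). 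A small useful addition on your side is that you explicitly verify that $\exp^N$ is defined on a uniform ball, which is a hypothesis of Theorem~\ref{thm_cheeger} that the paper's proof passes over in silence; your argument through the bound on $\II$ is sound, though one can also get it more cheaply by noting that a closed submanifold of a complete manifold is automatically complete in its intrinsic metric (since $\dist_N\ge\dist_M|_N$ and both induce the manifold topology on $N$), after which Hopf--Rinow gives geodesic completeness.
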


\begin{proof}
The proof is similar to the one of Lemma~\ref{lem_widehatM_inj}.
Let $F\define \exp^\perp|_{V_{r_1}(\nu_N)}$ be as above. 
Assume $\Vert dF\Vert\leq C_2$ and $\Vert (dF)^{-1}\Vert\leq C_2$ with
$C_2\geq 1$ on $V_{r_1}(\nu_N)$. For $q\in N$, we define
$B(q)\define V_{r_1}(\nu_N) \cap \pi^{-1}(B_{r_1}^N(q))$, using the
notations of Definition~ \ref{def.controlled}. Then,
$B^M_{r_1/C_2}(q)\subset F(B(q))$ by the boundedness of
$(dF)^{-1}$. By Theorem~\ref{thm_cheeger} applied to $M$ and to
$\rho=r_1/C_2$, there is a $v>0$, independent of $q$, such that
$\vol(B^M_{r_1/C_2}(q))\geq v$, and thus $\vol (F(B(q)))\geq v$. Thus,
$\vol(B(q))\geq C_2^{-n}v$ where the volume on $B(q)$ is taken with
respect to the natural metric on $\nu_N$.  Then by
Lemma~\ref{lem_subN} (compare to the proof of
\cite[Lemma~4.4]{Grosse.Schneider.2013}), $\vol (B(q)) \leq C_3
r_1^{m-n} \vol (B_{r_1}^N(q))$.  Thus, $\vol (B_{r_1}^N(q))$ is
bounded from below independently on $q\in N$.
Theorem~\ref{thm_cheeger} applied for $\rho=r_1$ then yields that $N$
has a positive injectivity radius.
\end{proof}

This sequence of lemmata gives us the desired result.

\begin{corollary}
  Let $N$ be a controlled submanifold in a Riemannian manifold of
  bounded geometry. Then, $N$ is a manifold of bounded geometry.
\end{corollary}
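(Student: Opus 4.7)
The plan is to observe that the two defining conditions of bounded geometry for a manifold without boundary, namely totally bounded curvature and positive injectivity radius, have already been separately established in the preceding lemmas under exactly the hypotheses available. So the proof should be a short assembly rather than new analysis.

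First I would unwind the definition: since $N$ is a controlled submanifold, it carries the induced Riemannian metric and has no boundary (it is a closed submanifold of $M$, which has no boundary). Thus to verify that $(N, g|_N)$ is of bounded geometry I need to check exactly two things: that $\|(\nabla^N)^k R^N\|_{L^\infty} < \infty$ for every $k \geq 0$, and that $\rinj(N) > 0$.

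For the curvature bounds, I would invoke Lemma \ref{lem_subN}(i). Its hypotheses are that $M$ has totally bounded curvature, which holds because $M$ is of bounded geometry, and that $\|(\nabla^N)^k \II\|_{L^\infty} < \infty$ for all $k \geq 0$, which is part of Definition \ref{def.controlled}. Hence the lemma applies and the intrinsic curvature tensor of $N$ together with all its covariant derivatives is uniformly bounded.

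For the injectivity radius, I would apply Lemma \ref{lem_pos_inj_N} directly: its hypotheses are precisely that $M$ has no boundary and is of bounded geometry, and that $N \subset M$ is controlled. Thus $\rinj(N) > 0$. Combining these two facts gives that $N$ is a Riemannian manifold of bounded geometry, which is the corollary. I do not anticipate any real obstacle; the only thing to be careful about is to note that since $N$ is closed in $M$ and inherits completeness from $M$ (which itself is complete by virtue of being of bounded geometry without boundary), there is no separate completeness issue to address beyond what is already implicit in the positivity of the injectivity radius together with the curvature bounds.
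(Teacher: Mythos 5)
Your proof is correct and is essentially the paper's own argument, which simply cites ``this sequence of lemmata'' (namely Lemma~\ref{lem_subN}(i) for the curvature bounds and Lemma~\ref{lem_pos_inj_N} for the injectivity radius) and leaves the assembly implicit. Your closing remark on completeness is harmless but unnecessary, since the paper's definition of bounded geometry for a boundaryless manifold consists exactly of totally bounded curvature and positive injectivity radius.
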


The next corollary, together with Lemma~\ref{lem_extend_M}, implies
then Theorem~\ref{theo_crit_bdd_geo}.

\begin{corollary}
  Let $M$ be a Riemannian manifold with boundary satisfying (N), (I)
  and (B). Then, $\pa M$ is a manifold of bounded geometry.
\end{corollary}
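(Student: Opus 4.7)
The plan is to simply chain together the two lemmas of this subsection with the preceding corollary. The hypotheses (N), (I), (B) are precisely what is needed to invoke Lemma~\ref{lem_extend_M} and Lemma~\ref{lem_widehatM_inj}; once we have a bounded geometry manifold $\widehat M$ without boundary in which $\pa M$ sits as a controlled submanifold, the preceding corollary finishes the job.

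More explicitly, first I would apply Lemma~\ref{lem_extend_M} to construct a complete Riemannian manifold $\widehat M$ without boundary, an isometric embedding $M \hookrightarrow \widehat M$, and observe that $\pa M$ sits as a controlled submanifold of $\widehat M$ (with uniformly bounded covariant derivatives of its second fundamental form thanks to (B), and with a uniform normal collar of width $r_\pa/2$ thanks to (N) and the doubling construction). Lemma~\ref{lem_extend_M} also gives $\Vert \nabla^k R^{\widehat M}\Vert_{L^\infty} < \infty$ for every $k \geq 0$, so $\widehat M$ has totally bounded curvature.

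Next I would upgrade $\widehat M$ from ``totally bounded curvature'' to ``bounded geometry,'' which amounts to checking $\rinj(\widehat M) > 0$. This is exactly the content of Lemma~\ref{lem_widehatM_inj}, whose hypotheses (N), (I), (B) are assumed. Combining with the previous paragraph, $\widehat M$ is a Riemannian manifold without boundary of bounded geometry in which $\pa M$ is a controlled (equivalently, bounded geometry) submanifold.

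Finally, I would apply the previous corollary — controlled submanifolds of bounded geometry manifolds are themselves of bounded geometry — to $\pa M \subset \widehat M$, yielding that $\pa M$ is of bounded geometry, as required. I do not anticipate any real obstacle here: all the analytic and geometric content has been packaged into the three results just cited, and the corollary is a one-line assembly of them. The only point worth a brief sanity check is that the ``controlled'' data for $\pa M$ produced by Lemma~\ref{lem_extend_M} coincides with what Definition~\ref{def.controlled} requires (bounds on $(\nabla^{\pa M})^k \II$ and injectivity of $\exp^\perp$ on a uniform tubular neighborhood), which is immediate from (B) and (N) respectively.
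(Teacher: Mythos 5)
Your proposal is correct and reproduces essentially the same assembly of lemmas that the paper intends (the paper leaves the corollary without a written proof, viewing it as a direct consequence of Lemma~\ref{lem_extend_M}, Lemma~\ref{lem_widehatM_inj}, and the preceding corollary, exactly as you lay out). One small point of rigor worth noting: Lemma~\ref{lem_extend_M} as stated takes as hypothesis that $M$ is already a manifold with boundary and bounded geometry, which is not what is available here; the correct phrasing is that one carries out the \emph{construction} in the proof of Lemma~\ref{lem_extend_M}, which uses only (N) and (B), and then Lemma~\ref{lem_widehatM_inj} (which is explicitly stated for $M$ satisfying (N), (I), (B) and which refers to ``the manifold $\widehat M$ as constructed in the proof of the last lemma'') supplies $\rinj(\widehat M)>0$ — so the argument is not circular.
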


As we have seen, controlled submanifolds have bounded geometry
  in the sense of Theorem~\ref{theo_crit_bdd_geo}. This suggests the
  following definition.

\begin{definition}\label{def.gen.bg}
A controlled submanifold $N \subset M$ of a manifold with bounded
geometry will be called a \emph{bounded geometry submanifold} of
$M$. 
\end{definition}

See also Eldering's papers \cite{ElderingCR, ElderingThesis}.
In particular a hypersurface with unit normal field is controlled if,
only if, it is a bounded geometry hypersurface as in
Definition~\ref{hyp_bdd_geo}. The following example is sometimes
useful.

\begin{example} \label{ex.Omega} Let $(M_0, g_0)$ be a 
manifold with bounded geometry (thus, without boundary). Let $f\colon
M_0 \to \RR$ be a smooth function such that $df$ is totally bounded
(that is, all covariant derivatives $\nabla^k df$ are uniformly
bounded for $k \geq 0$). The function $f$ itself does not have to be
bounded. Let us check that 
\begin{equation*}
  \Omega_{\pm}(f) \define \{(x, t) \in M\define M_0 \times \RR\vert\,
  \pm (t - f(x) ) \geq 0 \}
\end{equation*}
is {\em a manifold with boundary and bounded geometry} using
Theorem~\ref{theo_crit_bdd_geo}. Most of the conditions of 
Theorem~\ref{theo_crit_bdd_geo} are already fulfilled
since $(M_0, g_0)$ is of bounded geometry; only condition (N) and the
part of condition (B) including the second fundamental form remain to
be checked. We start with the second fundamental form. The second
fundamental form of $N$ is defined by $\II (X,Y)=-g(\nabla^{M}_X \nu,
Y)\nu$ for all vector fields $X,Y$ tangent to $N$ where $g=g_0+dt^2$
where a unit normal vector field of $N\define \pa \Omega_{\pm}(f)$ is
given by
\begin{equation*}
  \nu_{(x, f(x))} \seq (1+ |\text{grad}^g
  f(x)|^2)^{-\frac{1}{2}}(\text{grad}^g f(x) -\partial_t).
\end{equation*}
A vector field tangent to $N$ has the form $X=(X_1,X_2\partial_t)$
with $X_1\in \Gamma(TM_0)$ and $X_2\in C^\infty(\RR)$ with
$df(X_1)=X_2$.  Since the metric $g$ has product structure on
$M_0\times \RR$, the Levi-Civita connection splits componentwise. More
precisely, given the vector fields $X=X_1+X_2\partial_t$ and
$Y=Y_1+Y_2\partial_t$ tangent to $N$ we get
\begin{align*} 
 \II(X,Y)& =- \<Y_1+ Y_2 \partial_t, (\nabla^{M_0}_{X_1}
 +X_2\partial_t)\nu\>\, \nu.
\end{align*}
Since $\nabla^k df$ is bounded for all $k\geq 0$ and $(M,g)$ has
bounded geometry, we get that~$\II$ and all its covariant derivatives
are uniformly bounded. In particular the mean curvature of $N$ is
bounded.

In order to verify condition (N) of Theorem~\ref{theo_crit_bdd_geo} we
first note that due to the product structure on $M= M_0\times \RR$, it
suffices to prove that there are constant $r_\partial, \delta>0$ such
that $\exp^\perp|_{V_x}$ is a diffeomorphism onto its image for all
$x=(y,t)\in N$ and $V_x\define \{ (y',t')\in N\ |\ d_{g_0}(y,y')
<\delta\}$. Since $f$ is bounded, there is $\hat{\delta}>0$ such that
$V_x\subset B_{\hat{\delta}}(x)\subset M$ and together with the
bounded geometry of $M_0$ the existence of positive constants
$r_\partial$ and $\delta$ follow. Hence, altogether we get that
$\Omega_{\pm}(f)$ is a manifold with boundary and bounded geometry.
    
Let us now assume additionally that $g \colon M_0 \to \RR$ has the
same properties (i.e.~$dg$ is totally bounded).  Assume $g < f$. Then
\begin{equation*}
  \Omega(f, g) \define \Omega_{-}(f) \cap \Omega_+(g) = \{(x, t) \in
  M_0 \times \RR\ \vert\, g(x) \leq t \leq f(x) \}
\end{equation*}
is a manifold with boundary. Denote $M \define \Omega(f, g)$ and let
$\partialDM$ be any non-empty union of connected components of $\pa
M$.  Then $(M, \partialDM)$ has bounded geometry if, and only if,
there exists $\epsilon > 0$ such that $f - g \geq \epsilon$. It has
finite width if, and only if, there exists also $R > 0$ such that $R
\geq f - g \geq \epsilon$.  In \cite{Browder60}, manifolds with
boundary and bounded geometry were considered in the particular case
when they were subsets of $\RR^n$. Our criteria for $\Omega(f, g)$ to
be of bounded geometry thus helps reconcile the definitions in
\cite{Browder60} and \cite{Schick.2001}.
\end{example}

The following corollary of Theorem \ref{theo_crit_bdd_geo}, when used
in conjunction with Example \ref{ex.Omega}, yields many examples of
manifolds with boundary and bounded geometry.

\begin{corollary}
\label{cor.gluing}
 Let $M = \bigcup_{i=1}^{N} W_i$ be a Riemannian manifold with boundary 
 with $W_i$ open subsets in $M$. We assume that 
 \begin{enumerate}
  \item[(N')\,] For each $1 \le i \le N$, there exists $r_i>0$ such that
  \begin{align*} 
     (\partial M \cap W_i) \times [0, r_i) \to W_i,\ (x, t) \mapsto 
     \exp^\perp(x,t) \define \exp_x(t\nu_x)
  \end{align*}
  is well defined and a diffeomorphism onto its image.
  \item[(I')\ ] There is $\rinj(M) > 0$ such that for all $r \leq
    \rinj(M)$ and all $x\in M\setminus U_{r}(\partial M)$, there 
    exists $1 \leq i \leq N$ such that $x \in W_i$ and the 
    exponential map $\exp_x \colon B_r^{T_x W_i}(0)\to W_i$
    is well-defined and a diffeomorphism onto its image.
  \item[(B)\,]  For every $k \geq0$ and $i = 1, \ldots, N$, we have 
 \begin{align*}
  \Vert \nabla^k R^{W_i} \Vert_{L^\infty} < \infty \text{\ \ \ and\ \ \ }
  \Vert (\nabla^{\partial W_i})^k \II\Vert_{L^\infty}< \infty \,.
 \end{align*}
\end{enumerate}
Then $(M,g)$ is a Riemannian manifold with boundary and bounded
geometry. 
\end{corollary}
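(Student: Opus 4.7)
The strategy is to apply Theorem~\ref{theo_crit_bdd_geo} to $M$, verifying its three hypotheses (N), (I), (B) by reducing each to the corresponding primed hypothesis on the finite cover $\{W_i\}_{i=1}^N$. Since each $W_i$ is open in $M$, its Levi-Civita connection, curvature, and the second fundamental form of $\pa M\cap W_i$ inside $W_i$ coincide pointwise with the corresponding objects on $M$ and $\pa M$. As $M=\bigcup_i W_i$ and $\pa M=\bigcup_i(\pa M\cap W_i)$, taking the maximum of the finitely many bounds in (B) yields uniform bounds $\|\nabla^k R^M\|_{L^\infty}<\infty$ and $\|(\nabla^{\pa M})^k\II\|_{L^\infty}<\infty$ for every $k\ge 0$, which is (B) for $M$.

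For (I), I take the same $\rinj(M)>0$ as in (I'). Given $r\le\rinj(M)$ and $x\in M\setminus U_r(\pa M)$, (I') produces an index $i$ with $x\in W_i$ on which $\exp_x\colon B_r^{T_xW_i}(0)\to W_i$ is a diffeomorphism onto its image. As $T_xW_i=T_xM$ and $W_i$-geodesics are $M$-geodesics (because $W_i$ is open in $M$), the same map, viewed as landing in $M$, is a diffeomorphism onto its image in $M$, which is (I).

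For (N), set $r_0:=\min_i r_i>0$. For each $(x,t)\in \pa M\times[0,r_0)$, choose $W_i\ni x$; then (N') gives $\exp^\perp(x,t)\in W_i\subset M$, so $\exp^\perp\colon \pa M\times[0,r_0)\to M$ is well defined and a local diffeomorphism. The nontrivial task is to produce $r_\pa\in(0,r_0]$ for which this map is globally injective. I plan to argue that, for $r_\pa$ small enough, every $y\in U_{r_\pa}(\pa M)$ has a unique closest point $x_y\in\pa M$, forcing $y=\exp^\perp(x_y,\dist(y,\pa M))$ to be the only preimage. Existence of $x_y$ is local: a witness $x\in W_i\cap\pa M$ inverts through the collar map $\exp^\perp|_{(\pa M\cap W_i)\times[0,r_i)}$, whose image $V_i$ is open in $W_i$ and a neighborhood of $\pa M\cap W_i$. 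For uniqueness, suppose $x,x'\in\pa M$ both realize the distance from $y$, with $d(y,x)=d(y,x')<r_\pa$. If both lie in a common $W_i$, the injectivity clause of (N') yields $x=x'$. Otherwise, $x'$ lies outside $W_i$, and the uniform second fundamental form bound from (B), together with $\min_i r_i>0$ and the finiteness $N<\infty$ of the cover, provides a uniform lower bound on the geodesic distance a unit-speed perpendicular geodesic from $\pa M$ must traverse before leaving $V_i$; choosing $r_\pa$ strictly below this bound rules out such $x'$. With (N), (I), (B) verified, Theorem~\ref{theo_crit_bdd_geo} yields that $(M,g)$ is a Riemannian manifold with boundary and bounded geometry.

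The main obstacle is precisely this last global-injectivity step in (N): local injectivity inside each $V_i$ is automatic from (N'), but assembling the $V_i$ into a single tubular neighborhood of uniform thickness requires controlling how quickly normal geodesics from different boundary points can meet, which forces one to combine the uniform curvature and second fundamental form bounds from (B) with the combinatorial finiteness of the cover. All other steps are routine once this uniform normal collar is in hand.
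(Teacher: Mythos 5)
The intended strategy is surely to verify (N), (I), (B) and invoke Theorem~\ref{theo_crit_bdd_geo}; the paper labels the statement a corollary of that theorem and offers no separate proof, and you have correctly identified this route. Your handling of (B) and (I) is correct: the curvature and second fundamental form are local tensors, so the finitely many bounds in the primed hypothesis max to a global bound, and since each $W_i$ is open the identifications $T_xW_i = T_xM$ and the agreement of geodesics make (I$'$) pass directly to (I).

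The gap is in (N), and it is genuine. Two separate problems. First, the step meant to rule out a second nearest point $x'\notin W_i$ does not work as stated: you bound from below the time a perpendicular geodesic \emph{starting in} $\pa M\cap W_i$ takes to leave $V_i$ (which is simply $r_i\ge r_0$), but $x'$ lies outside $W_i$ so its perpendicular geodesic does not start in $V_i$; nothing prevents it from entering $V_i$ and reaching $y$ very quickly. What you would actually need is a uniform positive lower bound on $\dist_M\bigl(x,\,\pa M\setminus W_i\bigr)$ over some subcover of $\pa M$ --- a Lebesgue-number statement --- and that does not follow from finiteness of the cover when $M$ is non-compact. Second, even granting uniqueness of the nearest point $x_y$, the assertion that $y=\exp^\perp(x_y,\dist(y,\pa M))$ is ``the only preimage'' is unjustified: one must separately exclude preimages $(x',t')$ with $t'>\dist(y,\pa M)$, i.e.\ show that for $t'<r_\pa$ the perpendicular geodesic is distance-realizing, which is essentially the tubular-neighborhood statement one is trying to prove. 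A route that would close the gap uses the observation you did not make: condition (N$'$) forces $\gamma_x((0,r_0))\cap\pa M=\emptyset$ for every $x\in\pa M$ with $r_0=\min_i r_i$; hence if $\exp^\perp(x,t)=\exp^\perp(x',t')=y$ with $\dot\gamma_x(t)=-\dot\gamma_{x'}(t')$, the concatenation is the single geodesic $\gamma_x$ extended and hits $\pa M$ at $x'$ at time $t+t'$, forcing $t+t'\ge r_0$; the remaining configuration (tangents not antipodal at $y$) can only occur at or beyond a focal point of $\pa M$, and the focal radius is bounded below via (B). Making the dichotomy between these two cases rigorous (a Klingenberg-type lemma for the cut locus of a hypersurface) is the real work of the proof and is not present in your argument.
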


In applications, the subsets $W_i$ will be open subsets of some other
manifolds with boundary and bounded geometry (say of some manifold of
the form $\Omega(f, g)$).  In general, the subsets $W_i$ will not be
with bounded geometry.  If, moreover, the boundary of $M$ is
partitioned: $\pa M = \partialDM \amalg \partialNM$ and $d_H(W_i, W_i
\cap \partialDM ) < \infty$ for all $i$, then $(M, \partialDM)$ will
have finite width.

\subsection{Sobolev spaces via
  partitions of unity}\label{ssec.Sobolev} \emph{In this
  subsection, $M$ will be a manifold with boundary and bounded
  geometry, unless explicitly stated otherwise. This will be the case
  in most of the rest of the paper.}
  
We will need local descriptions of the Sobolev spaces using partitions
of unity. To this end, it will be useful to think of manifolds with
bounded geometry in terms of coordinate charts. This can be done by
introducing \emph{Fermi coordinates} on $M$ as in
\cite{Grosse.Schneider.2013}.  See especially Definition~4.3 of that
paper, whose notation we follow here. Recall that $\rinj(M)$ and
$\rinj(\pa M)$ denote, respectively, the injectivity radii of $M$ and
$\pa M$. Also, let $r_{\pa}\define \delta$ with $\delta$ as in
Definition~\ref{hyp_bdd_geo}.

Let $p \in \pa M$ and consider the diffeomorphism $\exp^{\pa
  M}_p\colon B^{T_{p}\pa M}_{r}(0) \to B^{\pa M}_r(p)$, if $r$ is
smaller than the injectivity radius of $\partial M$. Sometimes, we
shall identify $T_p \pa M$ with $\RR^{m-1}$ using an orthonormal
basis, thus obtaining a diffeomorphism $\exp^{\pa M}_p\colon
B^{m-1}_{r}(0) \to B^{\pa M}_r(p)$, where $B^{m-1}_{r}(0)\subset
\RR^{m-1}$ denotes the Euclidean ball with radius $r$ around $0\in
\RR^{m-1}$. Also, recall the definition of the normal exponential map
$\exp^{\perp} \colon \pa M \times [0, r_{\partial}) \to M$,
  $\exp^{\perp}(x, t) \define \exp_x^{M}(t\nu_x)$.  These two maps
  combine with the exponential $\exp_p^{M}$ to define maps
\begin{align}\label{eq.FC-chart}
\left\{\begin{matrix} \kappa_p \colon B^{m-1}_{r}(0) \times [0,r)\to
  M,\hfill & \kappa_p(x, t) \define \exp^{\perp}(\exp_{p}^{\pa M}(x),
  t),\hfill & \text{ if } p \in \pa M\\
 \kappa_p \colon B^{m}_{r}(0) \to M,\hfill & \kappa_p(v) \define
 \exp_p^{M}(v),\hfill & \text{ otherwise.}
\end{matrix}\right.
\end{align}
We let

\begin{align}\label{eq.Ugamma}
 W_p(r) \define
 \begin{cases}
  \kappa_p(B^{m-1}_{r}(0) \times [0,r)) \subset M & \text{ if } p \in
    \pa M\\
  \kappa_p(B^{m}_r(0)) = \exp_p^{M}(B^{m}_r(0)) & \text{ otherwise.}
 \end{cases}
\end{align}
In the next definition, we need to consider only the case $p \in \pa
M$, however, the other case will be useful when considering partitions
of unity.

\begin{definition}\label{FC-chart}
Let $p\in \partial M$ and $r_{FC} \define \min\left\{\, \frac{1}{2}
\rinj (\partial M), \, \frac{1}{4} \rinj (M),\, \frac{1}{2}
r_\partial\, \right\}$. Fix $0 < r \leq r_{FC}$. The map $\kappa_p
\colon B^{m-1}_{r}(0) \times [0,r) \to W_p(r)$ is called a \emph{Fermi
    coordinate chart} and the resulting coordinates $(x^{i}, r) \colon
  W_p(r) \to \RR^{m-1} \times [0, \infty)$ are called \emph{Fermi
      coordinates (around $p$)}.
\end{definition}

Figure~\ref{figFCchart} describes the Fermi coordinate chart.
\smallskip

\begin{figure}[ht!]
 \centering
 \begin{tikzpicture}[scale=0.8] 
\draw (0,0) -- (4,0); \draw[dotted] (0,0) -- (0,2) node[above]
      {$B^{m-1}(r)\times [0,r)$} -- (4,2) -- (4,0); \draw (1,1)
        node[right]{$(x,t)$} circle (0.02cm); \draw (2,0) node[below]
        {$0$} circle (0.02cm);

\draw[line width=1] (6,0) .. controls (9,1.5) .. (12,1) node[right]
     {$\partial M$}; \draw (9,1.24) node[below] {$p$} circle (0.03cm);
     \draw (8,0.95) circle (0.04cm); \draw[->, dashed] (8.5, 0)
     node[below] {$y=\exp^{\partial M}_p(x)$} -- (8,0.8);

\draw (8,0.9) .. controls (7.8,1.5) .. (8,2.2);

\draw[->] (8,0.95) -- (7.7,1.7); \draw (7.5, 1.5) node {$\nu_y $};
\draw (8,2.2) node[right] {$\exp^M_y(t\nu_y) $} circle (0.02cm);

\draw (6,0) .. controls (6.5,1.5) .. (6.5,3);
\draw (12,1) .. controls (12.5,2.5) .. (12.5,4) node[right] {$M$};
\draw (6.5,3) .. controls (9,4.5) .. (12.5,4);

\draw[dotted] (7,0.5) .. controls (6.8,1.5) .. (7.2,2.5);
\draw[dotted] (11,1.2) .. controls (10.8,2.5) .. (11.2,3.2)
node[xshift=13pt] {$W_p(r)$}; \draw[dotted] (7.2,2.5) .. controls (9,3.5)
.. (11.2,3.2);

\draw[->] (4.5,1) .. controls (5.3,1.5)  .. (6,1.3);
\draw (5.3,1.5) node[above] {$\kappa$};
\end{tikzpicture}
\caption{Fermi coordinates}\label{figFCchart}
\end{figure}
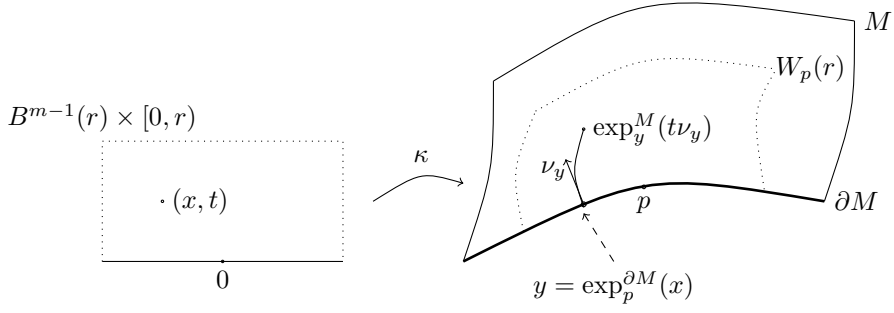
\smallskip

\begin{remark}\label{rem_FC_1}
Let $(M, g)$ be a manifold with boundary and bounded geometry.  Then
the coefficients of $g$ all their derivatives are uniformly bounded in
Fermi coordinates charts, see, for instance, \cite[Definition~3.7,
  Lemma~3.10 and Theorem~4.9]{Grosse.Schneider.2013}.
\end{remark}

For the sets in the covering that are away from the boundary, we will
use geodesic normal coordinates, whereas for the sets that intersect
the boundary, we will use Fermi coordinates as in
Definition~\ref{FC-chart}.  This works well for manifolds with bounded
geometry.  Note that
in this subsection, we \emph{do not} assume that $M$ has finite width.

Recall the notation of Equation~\eqref{eq.Ugamma}.
  
\begin{definition}\label{def.FC}  
Let $M^m$ be a manifold with boundary and bounded geometry. Assume as
in Definition~\ref{FC-chart} that $0 < r \leq r_{FC} \define
\min\left\{\, \frac{1}{2} \rinj (\partial M), \, \frac{1}{4} \rinj
(M),\, \frac{1}{2}r_\partial\, \right\}$. A subset
$\{p_\gamma\}_{\gamma \in I}$ is called an \emph{$r$-covering subset
  of $M$} if the following conditions are satisfied:
\begin{enumerate}[(i)]
\item For each $R>0$, there exists $N_R \in \NN$ such that, for each
  $p \in M$, the set $\{\gamma \in I\vert\, \dist(p_\gamma, p) <
  R\}$ has at most $N_R$ elements.
 \item For each $\gamma \in I$, we have either $p_\gamma \in \pa M$ or
   $d(p_\gamma, \pa M) \geq r$, so that $W_\gamma \define
   W_{p_\gamma}(r)$ is defined.
 \item $M \subset \cup_{\gamma = 1}^{\infty} W_{\gamma}$.
\end{enumerate}
\end{definition}

\begin{remark}\label{rem_FC} 
It follows as in \cite[Remark~4.6]{Grosse.Schneider.2013} that if $0 <
r < r_{FC}$ then an $r$-covering subset of $M$ always exists, since
$M$ is a manifold with boundary and bounded geometry. The picture
below shows an example of an $r$-covering set, where, the $p_\beta$'s
denote the points $p_\gamma \in \pa M$ and the $p_\alpha$'s denote the
rest of the points of $\{p_\gamma\}$.
\end{remark}

\begin{remark}
Let $(M, g)$ be a manifold with boundary and bounded geometry.  Let
$\{p_\gamma\}_{\gamma \in I}$ be an $r$-covering set and
$\{W_\gamma\}$ be the associated covering of $M$. It follows from (i)
of Definition \ref{def.FC} that the coverings $\{W_\gamma\}$ of $M$ and
$\{W_\gamma\cap \pa M\}$ of $\pa M$ are \emph{uniformly locally
  finite}, i.e.~there is an $N_0>0$ such that no point belongs to more
than $N_0$ of the sets $W_\gamma$.
\end{remark}

We shall need the following class of partitions of unity defined using
$r$-covering sets. Recall the definition of the sets $W_\gamma$ from
Definition~\ref{def.FC}(ii).

\begin{definition} \label{def.runif}
A partition of unity $\{\phi_\gamma\}_{\gamma \in I}$ of $M$ is
called \emph{an $r$-uniform partition of unity associated to the
  $r$-covering set $\{p_\gamma\} \subset M$ } (see
Definition~\ref{def.FC}) if
\begin{enumerate}[(i)]
 \item The support of each $\phi_\gamma$ is contained in $W_\gamma$.
 \item For each multi-index $\alpha$, there exists $C_\alpha > 0$ such
   that $|\pa^{\alpha}\phi_\gamma| \leq C_\alpha$ for all $\gamma$,
   where the derivatives $\pa^{\alpha}$ are computed in the normal
   geodesic coordinates, respectively Fermi coordinates, on
   $W_\gamma$.
\end{enumerate}
\end{definition}

\begin{center}
 
\usetikzlibrary{decorations.pathreplacing}
\begin{tikzpicture}[scale=1.1]

\draw[line width=1] (6,0) .. controls (9,1.5) .. (12,1) node[right]
     {$\partial M$}; \draw (6,0) .. controls (6.5,1.5) .. (6.5,3);
     \draw (12,1) .. controls (12.5,2.5) .. (12.5,4) node[right]
           {$M$}; \draw (6.5,3) .. controls (9,4.5) .. (12.5,4);

\draw[loosely dotted] (6.3,1) .. controls (9,2.5) .. (12.3,2);

\draw[fill] (9,1.24) circle (0.04cm); \draw[fill] (7.5,0.75) circle
(0.04cm); \draw[fill] (10.5,1.23) circle (0.04cm);

\draw[->, line width=0.2] (6.5, 4) node[above] {$p_\alpha$'s}
.. controls (7.5,3) ..  (8.9,2.7); \draw[->, line width=0.2] (6.5, 4)
.. controls (7.8,3.3) ..  (10.25,2.5); \draw[->, line width=0.2] (6.5,
4) .. controls (7,3) .. (7.7,2.25); \draw[->, line width=0.2] (6.5, 4)
.. controls (8,3.6) .. (9.75,3.2);

\draw[->, line width=0.2] (8.5, 0) node[below] {$p_\beta$'s}
.. controls (8,0.5) ..  (7.55,0.65); \draw[->, line width=0.2] (8.5,
0) -- (9,1.1); \draw[->, line width=0.2] (8.5, 0) .. controls (9,0.6)
.. (10.4,1.1);

\draw[dash pattern=on 1.5pt off 1.5pt on 1.5pt off 1.5pt] (8,0.95)
.. controls (8.25,1.5) .. (8.23,2) .. controls (9.2,2.35) .. (10.23,
2.28) .. controls (10.25, 1.6) .. (10, 1.25);
 
\draw[dash pattern=on 0.5pt off 1pt on 0.5pt off 1pt] (6.7,0.4)
.. controls (6.95,1) .. (6.93,1.34) .. controls (7.55,1.7) .. (8.6,
2.14) .. controls (8.65, 1.6) .. (8.4, 1.1);

\draw[dash pattern=on 0.5pt off 1pt on 0.5pt off 1pt] (9.7,1.4)
.. controls (9.95,2) .. (9.93,2.3) .. controls (10.65,2.25) .. (11.6,
2.1) .. controls (11.65, 1.6) .. (11.4, 1.1);

 \draw[dotted] (9,2.7) circle (1cm);
 \draw[fill] (9,2.7) circle (0.02cm);
 \draw[dotted] (10.3,2.5) circle (1cm);
 \draw[fill] (10.3,2.5) circle (0.02cm);
 \draw[dotted] (7.7,2.2) circle (1cm);
 \draw[fill] (7.7,2.2) circle (0.02cm);
 \draw[dotted] (9.8,3.2) circle (1cm);
 \draw[fill] (9.8,3.2) circle (0.02cm);
\draw[decoration={brace,mirror,raise=5pt},decorate]
  (6.1,1) -- node[left=6pt] {$\partial M\times [0,r)$} (6.1,0);

\end{tikzpicture}

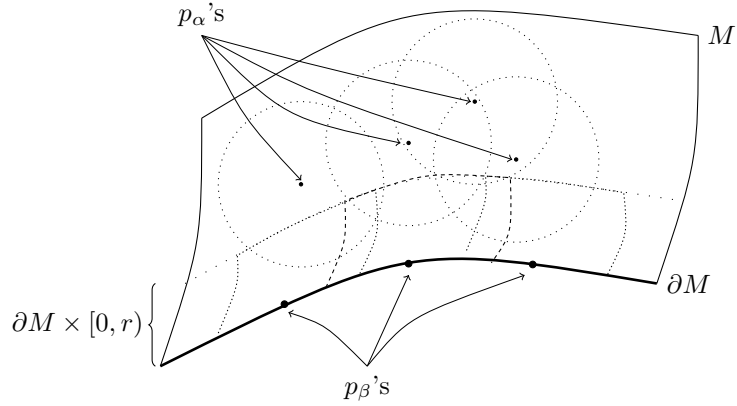
\captionof{figure}{A uniformly locally finite cover by Fermi and
  geodesic coordinate charts, compare with
  Remark~\ref{rem_FC}.}\label{figFCcover}
\end{center}

\begin{remark}
 Given an $r$-covering set $S$ with $r \leq r_{FC}/4$, an $r$-uniform
 partition of unity associated to $S \subset M$ always exists, since
 $M$ is a manifold with boundary and bounded geometry
 \cite[Lemma~4.8]{Grosse.Schneider.2013}.
\end{remark}

We have then the following proposition that is a consequence of
Remark~3.5 and Theorem~3.9 in \cite{Grosse.Schneider.2013}. See also
\cite{AmannAnis, AmannFunctSp, sobolev, BaerBallmann, BaerGinoux, 
  gerardBG, kordyukovLp2, Skrzypczak, TriebelBG} for related results,
in particular, for the use of partitions of unity.

\begin{proposition}\label{prop.part.unit}
Let $M^m$ be a manifold with boundary and bounded geometry.  Let
$\{\phi_\gamma\}$ be a uniform partition of unity associated to the
$r$-covering set $\{p_\gamma\} \subset M$ and let $\kappa_\gamma =
\kappa_{p_\gamma}$ be as in Equation~\ref{FC-chart}. Then
 \begin{align*}
  |||u|||^{2} \define \sum_{\gamma} \Vert (\phi_\gamma u) \circ
  \kappa_\gamma\Vert _{H^k}^{2}
 \end{align*}
 defines a norm equivalent to the usual norm $\Vert
 u\Vert_{H^{k}(M)}^2\define \sum_{i=0}^k \Vert \nabla^i
 u\Vert_{L^2(M)}^2$ on $H^{k}(M)$, $k\in \NN$.  Here $\Vert \cdot
 \Vert _{H^{k}}$ is the $H^{k}$ norm on either $\RR^{m}$ or on the
 half-space $\RR^{m}_+$.
\end{proposition}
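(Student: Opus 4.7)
The plan is to establish the two-sided equivalence $c^{-1}|||u||| \le \|u\|_{H^k(M)} \le c\,|||u|||$ by combining three uniform ingredients supplied by bounded geometry: first, the coefficients of $g$ and their derivatives are uniformly bounded (and uniformly elliptic) in each Fermi/normal chart $\kappa_\gamma$, with constants independent of $\gamma$ (Remark~\ref{rem_FC_1}); second, all derivatives of the partition functions $\phi_\gamma$ are uniformly bounded (Definition~\ref{def.runif}(ii)); third, the cover $\{W_\gamma\}$ is uniformly locally finite, i.e.\ each point of $M$ lies in at most $N_0$ of the sets $\supp\phi_\gamma$. In particular, the first ingredient lets us replace, at the cost of a uniform constant, the Euclidean $H^k$ norm of $(\phi_\gamma u)\circ\kappa_\gamma$ by the intrinsic $H^k$ norm of $\phi_\gamma u$ on $W_\gamma$. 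So throughout the argument we may work intrinsically.

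For the direction $|||u|||^2 \le C\|u\|_{H^k(M)}^2$, Leibniz and the uniform bounds on $\nabla^\alpha\phi_\gamma$ give
\begin{equation*}
 |\nabla^j(\phi_\gamma u)|^2 \,\le\, C_k \sum_{i\le j} |\nabla^i u|^2 \,\mathbf{1}_{\supp\phi_\gamma},
\end{equation*}
with $C_k$ independent of $\gamma$. Summing over $\gamma$ and invoking the overlap bound $\sum_\gamma \mathbf{1}_{\supp\phi_\gamma} \le N_0$ produces $\sum_\gamma \|\phi_\gamma u\|_{H^k(W_\gamma)}^2 \le C\|u\|_{H^k(M)}^2$, as required.

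For the reverse direction, the key pointwise inequality is that $\sum_\gamma\phi_\gamma = 1$ and the $N_0$-local finiteness imply, via Cauchy--Schwarz, $1 = \bigl(\sum_\gamma \phi_\gamma\bigr)^2 \le N_0 \sum_\gamma \phi_\gamma^2$. Hence, for any $j$,
\begin{equation*}
 |\nabla^j u|^2 \,\le\, N_0 \sum_\gamma |\phi_\gamma \nabla^j u|^2.
\end{equation*}
Using Leibniz in reverse, $\phi_\gamma \nabla^j u = \nabla^j(\phi_\gamma u) - R_{\gamma,j}$, where $R_{\gamma,j}$ is a polynomial expression in $\nabla^\alpha\phi_\gamma$ (with $|\alpha|\ge 1$) and $\nabla^i u$ for $i<j$, with uniform coefficients. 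Integrating, squaring, and applying the local finiteness to the remainder terms yields
\begin{equation*}
 \|u\|_{H^k(M)}^2 \,\le\, C\Bigl(\sum_\gamma \|\phi_\gamma u\|_{H^k(W_\gamma)}^2 + \|u\|_{H^{k-1}(M)}^2\Bigr).
\end{equation*}
The base case $k=0$ is immediate by integrating the above pointwise inequality, and induction on $k$ then gives $\|u\|_{H^k(M)}^2 \le C\sum_\gamma \|\phi_\gamma u\|_{H^k(W_\gamma)}^2$. Converting back from the intrinsic norm on $W_\gamma$ to the Euclidean chart norm of $(\phi_\gamma u)\circ\kappa_\gamma$, using bounded geometry once more, completes the proof.

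The main (and only) obstacle is bookkeeping: one must verify that every constant in the Leibniz expansions, in the passage between chart and intrinsic Sobolev norms, and in the switch from pointwise inequalities to integrated ones, can be chosen independently of $\gamma$. This is exactly what bounded geometry and the $r$-uniform structure of $\{\phi_\gamma\}$ guarantee, so the estimate uniformizes across all charts and the infinite sum over $\gamma$ is controlled by the $N_0$-local finiteness.
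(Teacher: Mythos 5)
The paper offers no proof of this proposition: it simply cites Remark~3.5 and Theorem~3.9 of Gro\ss e--Schneider \cite{Grosse.Schneider.2013}. So there is no proof in the paper for you to have matched or deviated from. That said, your self-contained argument is correct, and it is the standard way to prove such partition-of-unity Sobolev norm equivalences.

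All the ingredients you isolate are precisely what makes the argument uniformize over the (infinitely many) charts: (a) Remark~\ref{rem_FC_1} gives uniform boundedness of the metric coefficients, their derivatives, and of the inverse metric in the normal/Fermi charts, which makes the passage between the Euclidean chart $H^k$-norm of $(\phi_\gamma u)\circ\kappa_\gamma$ and the intrinsic $H^k$-norm of $\phi_\gamma u$ on $W_\gamma$ two-sided with a $\gamma$-independent constant; (b) Definition~\ref{def.runif}(ii) controls the covariant derivatives of $\phi_\gamma$ uniformly; (c) the $N_0$-bounded overlap of $\{\supp\phi_\gamma\}$ makes both summations over $\gamma$ converge with a uniform constant. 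Your forward estimate is the obvious Leibniz plus overlap bound. Your reverse estimate rests on the correct pointwise Cauchy--Schwarz trick, namely that at each point at most $N_0$ of the $\phi_\gamma$ are nonzero, so $1=(\sum_\gamma\phi_\gamma)^2\le N_0\sum_\gamma\phi_\gamma^2$, and then on absorbing the commutator remainders $R_{\gamma,j}$ into lower Sobolev order and running an induction on $k$, whose base case $k=0$ needs no commutator at all. This closes the argument cleanly, and all constants indeed depend only on the bounded-geometry constants, $r$, $N_0$, and $k$, as you claim.

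If you want to tighten the write-up in one spot, make explicit in the reverse direction that you sum the displayed pointwise bound over $j=0,\dots,k$ before invoking the induction hypothesis on $\|u\|_{H^{k-1}(M)}$, since the pointwise remainder $R_{\gamma,j}$ involves $\nabla^i u$ for $i<j$ and so the integrated inequality you obtain is $\|\nabla^j u\|_{L^2}^2\le C(\sum_\gamma\|\nabla^j(\phi_\gamma u)\|_{L^2}^2+\|u\|_{H^{j-1}}^2)$ for each $j$; summing these and then applying the inductive hypothesis gives exactly your displayed inequality.
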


Similarly, we have the following extension of the trace theorem to the
case of a manifold $M$ with boundary and bounded geometry, see
Theorem~5.14 in \cite{Grosse.Schneider.2013}. (See also \cite{sobolev}
for the case of Lie manifolds.)

\begin{theorem}[Trace theorem]\label{thm.trace}
 Let $M$ be a manifold with boundary and bounded geometry. Then, for
 every $s > 1/2$, the restriction to the boundary $\text{res}\colon
 \Gamma_c(M) \to \Gamma_c(\pa M)$ extends to a continuous, surjective
 map
 \begin{align*}
  \operatorname{res}\colon  H^s(M) \to H^{s-\frac{1}{2}}(\partial M).
 \end{align*}
\end{theorem}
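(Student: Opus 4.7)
The strategy is to reduce the trace theorem to the classical trace theorem on the Euclidean half-space $\RR^m_+$ via the uniform partition-of-unity description of Sobolev spaces from Proposition~\ref{prop.part.unit}. The crucial ingredient is that in Fermi coordinates on $M$ the metric coefficients and all their derivatives are uniformly bounded (Remark~\ref{rem_FC_1}), so all constants appearing in local estimates may be chosen independently of the chart. Fix a radius $0 < r \leq r_{FC}$, an $r$-covering set $\{p_\gamma\}_{\gamma \in I}$ of $M$, and an associated $r$-uniform partition of unity $\{\phi_\gamma\}$. Split $I = I_\pa \sqcup I_{\text{int}}$ according to whether $p_\gamma \in \pa M$. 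For $\gamma \in I_\pa$ the Fermi chart $\kappa_\gamma$ identifies $W_\gamma$ with $B_r^{m-1}(0) \times [0,r)$ and $W_\gamma \cap \pa M$ with $B_r^{m-1}(0) \times \{0\}$. Since $\pa M$ is itself a manifold of bounded geometry (Corollary preceding Definition~\ref{def.gen.bg}), the restrictions $\{\phi_\gamma|_{\pa M}\}_{\gamma \in I_\pa}$ form an $r$-uniform partition of unity on $\pa M$ subordinate to $\{W_\gamma \cap \pa M\}$, and Proposition~\ref{prop.part.unit} applies to $\pa M$ as well.

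For continuity, take $u \in H^s(M)$. For $\gamma \in I_\pa$, the function $(\phi_\gamma u)\circ \kappa_\gamma$ lies in $H^s(\RR^m_+)$ (after extension by zero), and its Euclidean trace on $\{t=0\}$ is $(\phi_\gamma u|_{\pa M}) \circ \kappa_\gamma$. The classical Euclidean trace theorem provides a constant $C$, independent of $\gamma$, such that
\begin{equation*}
\|(\phi_\gamma u|_{\pa M}) \circ \kappa_\gamma\|_{H^{s-1/2}(\RR^{m-1})}^{2} \,\leq\, C\, \|(\phi_\gamma u) \circ \kappa_\gamma\|_{H^s(\RR^m_+)}^{2}.
\end{equation*}
Summing over $\gamma \in I_\pa$ and invoking Proposition~\ref{prop.part.unit} on both sides yields $\|\operatorname{res} u\|_{H^{s-1/2}(\pa M)} \leq C' \|u\|_{H^s(M)}$.

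For surjectivity it suffices to build a bounded right inverse. Given $v \in H^{s-1/2}(\pa M)$, decompose $v = \sum_{\gamma \in I_\pa} \phi_\gamma v$. For each $\gamma$, pull $\phi_\gamma v$ back via $\kappa_\gamma|_{\pa M}$ to a compactly supported function on $\RR^{m-1}$ and apply the classical Euclidean extension operator $E_0 \colon H^{s-1/2}(\RR^{m-1}) \to H^s(\RR^m_+)$, which is a bounded right inverse of the Euclidean trace. Multiply the result by a fixed cutoff $\chi \in \CIc(B_r^{m-1}(0) \times [0,r))$ with $\chi \equiv 1$ on $\operatorname{supp}(\phi_\gamma\circ\kappa_\gamma|_{\pa M}\times\{0\})$ (chosen independently of $\gamma$ using the uniform Fermi cover) and push forward through $\kappa_\gamma$. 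This produces functions $E_\gamma v$ supported in $W_\gamma$ satisfying $\operatorname{res}(E_\gamma v) = \phi_\gamma v$ and $\|E_\gamma v\|_{H^s(M)} \leq C \|\phi_\gamma v\|_{H^{s-1/2}(\pa M)}$ uniformly in $\gamma$. Set $Ev \define \sum_\gamma E_\gamma v$; the uniform local finiteness of $\{W_\gamma\}$ together with Proposition~\ref{prop.part.unit} ensures that $E$ is bounded from $H^{s-1/2}(\pa M)$ to $H^s(M)$ and that $\operatorname{res}(Ev) = v$.

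The main technical point is the uniformity of all constants in the local Euclidean estimates; this uniformity is precisely the content of the bounded-geometry hypothesis (through Remark~\ref{rem_FC_1} and the uniform local finiteness of the cover). A secondary subtlety is the meaning of the local $H^s$-norms for non-integer $s$, but since both $H^s(M)$ and $H^{s-1/2}(\pa M)$ are defined through the partition-of-unity norms, the reduction to the Euclidean spaces $H^s(\RR^m_+)$ and $H^{s-1/2}(\RR^{m-1})$ is conceptually straightforward and the argument above goes through for all $s > 1/2$.
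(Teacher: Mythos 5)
The paper does not actually prove this theorem: it is quoted from Gro{\ss}e--Schneider, Theorem~5.14 in \cite{Grosse.Schneider.2013}, with the accompanying Lie-manifold reference \cite{sobolev}. Your proposal reconstructs the standard localization argument underlying that reference, and the overall strategy (uniform Fermi/geodesic charts, uniform partition of unity, reduction to the Euclidean trace and extension operators, uniform local finiteness to re-sum) is indeed the right one and essentially the one used there. So your proof is not \emph{different} from the paper's; it fills in a proof the paper omits and delegates.

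Two points deserve a sharper statement than you give them. First, Proposition~\ref{prop.part.unit} as stated in the paper only covers $H^k(M)$ for integer $k\in\NN$, while both the target $H^{s-1/2}(\pa M)$ and, for non-integer $s$, the source $H^s(M)$ require the fractional version of the partition-of-unity characterization. That fractional equivalence is precisely the analytic content of the Gro{\ss}e--Schneider framework and cannot be waved through by citing Proposition~\ref{prop.part.unit}; your closing remark acknowledges the issue but somewhat understates it, since it is really the crux. Second, in the extension step the bound $\Vert Ev \Vert_{H^s(M)}\leq C\Vert v\Vert_{H^{s-1/2}(\pa M)}$ does not follow immediately from the individual bounds $\Vert E_\gamma v\Vert_{H^s}\leq C\Vert \phi_\gamma v\Vert_{H^{s-1/2}}$: one must re-localize $Ev$ with the partition of unity, note that each $\phi_\delta Ev$ receives contributions from only $N_0$ of the $E_\gamma v$'s by uniform local finiteness, and then use Cauchy--Schwarz on that bounded sum before invoking the partition-of-unity norm equivalence again. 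This is routine, but it is an extra step, not a direct consequence of summing your local estimates. With these two clarifications your argument is complete and matches the cited source.
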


\begin{notation}\label{not.notH-1}
We shall denote by $H^{1}_0(M)$ the closure of $\CIc(M \smallsetminus
\pa M)$. Then it coincides with the kernel of the trace map $H^1(M)
\to L^2(\pa M)$. We also have that $H^{-1}(M)$ identifies with the
(complex conjugate) dual of $H^1_0(M)$ with respect to the duality map
given by the $L^2$-scalar product. In general, we shall denote by
$H^1_D(M)$ the kernel of the restriction map $H^1(M) \to
L^2(\partialDM)$ and by $H^1_D(M)^*$ the (complex conjugate) dual of
$H^1_D(M)$ (see Equation \eqref{eq.def.H1D} for the definition of
these spaces).
\end{notation}

\section{The Poincar\'e inequality}
\label{sec.two}
Recall that the boundary of $M$ is partitioned, that is, that
we have fixed a boundary decomposition $\partial
M=\partialDM\amalg \partialNM$ into open and closed disjoint subsets
$\partialDM$ and $\partialNM$.  \emph{We assume in this 
  section that $(M, \partialDM)$ has \textbf{finite width}.} 
  In particular, in this section, $M$ is a manifold with
boundary and bounded geometry and $\partialDM$ has a non-empty
intersection with each connected component of~$M$.

\subsection{Uniform generalization of the
  Poincar\'e inequality}\label{ssec.unif} The following seemingly more
general statement is in fact equivalent to the Poincar\'e inequality
(Theorem \ref{thm_Poin_intro}).

\begin{theorem}\label{thm_Poin}
Let $M_\alpha$, $\alpha \in I$, be a family of $m$-dimensional smooth
Riemannian manifolds with partitioned smooth boundaries $\partial
M_\alpha = \partialDM_\alpha \amalg \partialNM_\alpha$.  Let us assume
that the disjoint union $M := \coprod M_\alpha$ is such that $(M,
\partialDM)$ has finite width, where $\partialDM := \coprod
\partialDM_\alpha$. Then, for every $p\in [1,\infty]$, there exists $0
< \cunif < \infty$ (independent of $\alpha$) such that
\begin{equation*} 
   \Vert f\Vert _{L^p(M_\alpha)} \leq \cunif \big(\Vert f\Vert
   _{L^p(\partialDM_\alpha)} + \Vert d f\Vert _{L^p(M_\alpha)} \big)
\end{equation*}
for all $\alpha \in I$ and all $f \in W^{1,p}_{\loc}(M_\alpha)$.
\end{theorem}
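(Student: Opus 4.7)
The plan is to deduce Theorem \ref{thm_Poin} as an essentially formal corollary of Theorem \ref{thm_Poin_intro}. The key observation is that the statement is really a claim about the disjoint union $\widetilde M \define \coprod_{\alpha \in I} M_\alpha$ itself, and Remark \ref{def.mfd} has already taken care to make the ambient framework of the paper admit paracompact but possibly non-second-countable manifolds. Hence $\widetilde M$ qualifies as a manifold in the paper's sense even when the index set $I$ is uncountable, and Theorem \ref{thm_Poin_intro} may be applied to it directly.

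First, I would equip $\widetilde M$ with the natural disjoint-union Riemannian structure and partitioned boundary $\partialD \widetilde M \define \coprod_{\alpha \in I} \partialDM_\alpha$, and check that $(\widetilde M, \partialD \widetilde M)$ is a manifold with boundary and bounded geometry having finite width. All defining conditions of bounded geometry (Definition \ref{def_bdd_geo}, or equivalently (N), (I), (B) of Theorem \ref{theo_crit_bdd_geo}) are local and hold componentwise under the hypothesis of Theorem \ref{thm_Poin}; paracompactness of a disjoint union of metric spaces is automatic; and the uniform width bound for $\widetilde M$ is exactly the one built into the hypothesis.

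Second, Theorem \ref{thm_Poin_intro} applied to $\widetilde M$ supplies a single constant $c = c_{\widetilde M,p} > 0$ such that
\begin{equation*}
\Vert F\Vert_{L^p(\widetilde M)} \ \leq\ c \bigl( \Vert F\Vert_{L^p(\partialD \widetilde M)} + \Vert dF\Vert_{L^p(\widetilde M)} \bigr)
\end{equation*}
for every $F \in W^{1,p}_{\loc}(\widetilde M)$. Now fix $\alpha \in I$ and $f \in W^{1,p}_{\loc}(M_\alpha)$, and let $\widetilde f$ denote the extension of $f$ by zero to $\widetilde M$. Because each component $M_\beta$ is simultaneously open and closed in $\widetilde M$, the extension $\widetilde f$ lies in $W^{1,p}_{\loc}(\widetilde M)$ with distributional differential equal to $df$ on $M_\alpha$ and $0$ elsewhere; no interface regularity issues can arise. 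The three norms in the Poincar\'e inequality then localize exactly: $\Vert \widetilde f\Vert_{L^p(\widetilde M)} = \Vert f\Vert_{L^p(M_\alpha)}$, $\Vert \widetilde f\Vert_{L^p(\partialD \widetilde M)} = \Vert f\Vert_{L^p(\partialDM_\alpha)}$, and $\Vert d\widetilde f\Vert_{L^p(\widetilde M)} = \Vert df\Vert_{L^p(M_\alpha)}$. Substituting $\widetilde f$ into the displayed inequality yields the claim with $\cunif \define c$, a constant independent of $\alpha$.

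The only genuinely substantive point is not in this reduction but in the \emph{hypothesis} that Theorem \ref{thm_Poin_intro} has been proved without an implicit appeal to second countability. I therefore expect the only real obstacle to be ensuring that the proof of Theorem \ref{thm_Poin_intro} — the finite-width geometry together with the Sakurai-type argument mentioned around Remarks \ref{rem_sakurai} and \ref{rem_extend} — is written so that it treats each connected component uniformly and never tacitly enumerates components. Once that is in place, the disjoint-union trick above is completely routine; and this is precisely the reason the authors were careful to admit non-second-countable manifolds from the very beginning of the paper.
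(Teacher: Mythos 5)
Your proposal is correct and matches the paper's approach: both apply Theorem~\ref{thm_Poin_intro} directly to the disjoint union $M$ (admissible precisely because Remark~\ref{def.mfd} allows paracompact, non-second-countable manifolds) and then pass to a single component by extending $f$ by zero. The paper packages this same reduction as a proof by contradiction via a diverging sequence of components, while you give the cleaner direct form; note also that finite width of $(M,\partialDM)$ is already a stated hypothesis of the theorem, so the componentwise verification in your first paragraph is unnecessary.
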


The main point of this reformulation of the Poincar\'e inequality is,
of course, that $\cunif$ is independent of $\alpha \in I$.  Here
$W^{1,p}$ refers to the norm $\Vert f\Vert_{W^{1,p}}^p = \Vert
f\Vert_{L^p}^p+\Vert df\Vert_{L^p}^p$ (with the usual modification for
$p=\infty$) and $W^{1,2}=H^1$. Moreover, $W^{1,p}_{loc}$,
respectively $L^p_{loc}$, consists of distributions that are locally in
$W^{1,p}$, respectively in $L^p$. Note that $f\in W^{1,p}_{\loc}(M)$ implies
$f|_{\pa M}\in L^p_{\loc}(\pa M)$ by standard local trace inequalities
\cite{Browder60}.  In particular, if $\Vert f\Vert _{L^p(\pa M)} +
\Vert d f\Vert _{L^p(M)}$ is finite, then $\Vert f\Vert _{L^p}$ is
also finite and the inequality holds.

Theorem~\ref{thm_Poin} follows from Theorem~\ref{thm_Poin_intro} with
the following arguments. Assume that a constant $\cunif>0$ as above
does not exist, then there is a sequence $\alpha_i\in I$ and a
sequence of non-vanishing $f_i\in W^{1,p}_{\loc}(M_{\alpha_i})$ with
\begin{equation*} 
  \frac{\big(\Vert f_i\Vert _{L^p(\partialDM_{\alpha_i})} + \Vert d
    f_i\Vert _{L^p(M_{\alpha_i})} \big)}{ \Vert f_i\Vert
    _{L^p(M_{\alpha_i})}}\to 0
\end{equation*}
By extending $f_i$ by zero to a function defined on all of $M$
(which is the {\em disjoint} union of all the manifolds
  $M_\alpha$), we would get a counterexample to
Theorem~\ref{thm_Poin_intro}.

Note that in the case that $I$ is uncountable, $M$ is not second
countable. Many textbooks do not allow such manifolds, see
Remark~\ref{def.mfd}. If one wants to restrict to second countable
manifolds, we can only allow countable sets $I$ above.

Using Kato's inequality, we obtain that the Poincar\'e inequality
immediately extends to sections in vector bundles equipped with
Riemannian or hermitian bundle metrics and compatible connections.  In
Example~\ref{examp.no}, we will give an example that
illustrates the necessity of the finite width assumption.

\subsection{Idea of the proof of the
  Poincar\'e inequality}\label{ssec.strat}

We shall now prove the version of the Poincar\'e inequality stated in
the Introduction (Theorem \ref{thm_Poin_intro}). We will do that for
all $p \in [1,\infty]$, although we shall use only the case $p = 2$ in
this paper. The proof of the Poincar\'e inequality will be split into
several steps and will be carried out in the next subsections.  For
the benefit of the reader, we first explain in
Remark~\ref{rem_sakurai} the idea of the proof of the Poincar\'e
inequality by Sakurai for the case $\pa M=\partialDM$.  In
Remark~\ref{rem_extend} we comment on the differences of that proof
and ours.

\begin{remark}[Idea of the proof for $\partial M=\partialDM$] 
\label{rem_sakurai} The case $\partial
M=\partialDM$ of Theorem \ref{thm_Poin_intro} was proven by Sakurai in
\cite[Lemma 7.3]{Sakurai} (for real valued functions and $p=1$) under
weaker assumptions than ours on the curvature tensor. We recall
the main ideas of Sakurai's proof: As in the classical case of bounded
domains, one starts with the one-dimensional Poincar\'e estimate on
geodesic rays emanating from $\partial M$ and perpendicular to
$\partial M$ up to the point where those geodesics no longer minimize
the distance to the boundary. The main point of the proof is that
every point is covered exactly once by one of these minimizing
geodesic perpendiculars, except for a zero measure set (the cut locus
of the boundary).  Another important point is that the constant in
each of these one-dimensional Poincar\'e estimates is given in terms
of the lengths of those minimal geodesics and, thus, is uniformly
bounded in terms of the width of the manifold $M$.  The global
Poincar\'e inequality then follows by integrating over $\partial M$
the individual one-dimensional Poincar\'e inequalities. Here one needs
to take care of the volume elements since $M$ is not just a product of
$\partial M$ with a finite interval -- but this can be done by
comparison arguments using the assumptions on the geometry.
\end{remark}

See also \cite{CarronHardy, HebeyNonlin, Saloff-CosteSobolev} for
Poincar\'e type inequalities on manifolds without boundary and bounds
on the Ricci tensor.

\begin{remark}[Idea of proof in the general case]\label{rem_extend} 
Now, we no longer require $\partialDM=\partial M$. In this case, we
use a similar strategy, with the difference that now we are using the
one-dimensional Poincar\'e inequality \emph{only} for geodesics
emanating perpendicularly from $\partialDM$. One can see then that an
additional technical problem occurs since there may be a subset of $M$
with non-zero measure that cannot be reached by such geodesics. This
phenomenon is illustrated in Figure~\ref{fig:ex_bound}. Nevertheless,
this problem can be circumvented by first assuming that the metric has
a product structure near the boundary.  The necessary preliminaries
for this part will be given in Section~\ref{ssec.prod} and the proof
of our Poincar\'e inequality in this case is carried out in
Subsection~\ref{ssec.poinprod}, following the method of
\cite{Sakurai}. The case of a general metric is then obtained by
comparing the given metric with a metric that has a product structure
near the boundary. We thus begin by examining the case of a product
metric (near the boundary). This is done in
Subsection~\ref{ssec.poingen} (next).  See also \cite{Kasue1, Kasue2}.
\end{remark}

\begin{figure}
\begin{tikzpicture}[scale=0.8] 
\draw[line width=1] (-3,0)--(4,0) node[right] {\small $\partialDM$};
\draw[line width=1] (-3,2) .. controls (4,2) and (-3,5) .. (0,5) --
(4,5) node[right] {\small $\partialNM$};

\begin{scope} 
\clip (-3,0) -- (-3,2) .. controls (4,2) and (-3,5) .. (0,5) -- (4,5)
-- (4,0) -- (-3,0); \draw[fill, opacity=0.05] (-3,0) -- (-3,5) --
(4,5)-- (4,0)--(-3,0);
\end{scope}

\begin{scope}
\clip (-3,0) -- (-3,2) .. controls (4,2) and (-3,5) .. (0,5) --
(0.24,5) --(0.24,2.8) -- (4,2.8) -- (4,0) -- (-3,0); \draw[fill,
  opacity=0.3, draw=none] (-3,0) -- (-3,2.8) -- (4,2.8)--
(4,0)--(-3,0);
\end{scope}

\draw[fill, opacity=0.3, draw=none] (0.24,2.8) -- (0.24,5) -- (4,5)--
(4,2.8)--(0.24,2.8); \draw[->] (3,0) node[below] {\small $x$} -- (3,1)
node[right] {\small $\nu_x$};
\end{tikzpicture}
\caption{Manifold $M\subset \RR^2$ with boundary $\partialDM \cup
  \partialNM$ where only a subset (in gray) can be reached by
  geodesics perpendicular to $\partialDM$.}\label{fig:ex_bound}
\end{figure}
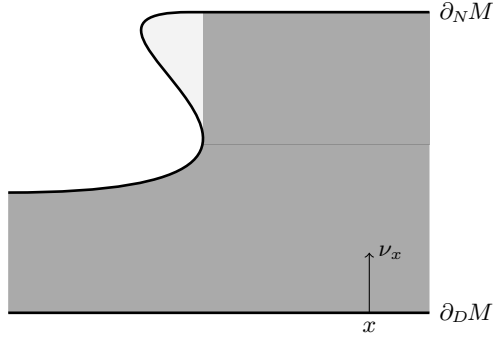

\subsection{Geometric preliminaries for
  manifolds with product metric} \label{ssec.prod} In the following
subsection, we will \emph{furthermore assume} that there
exists~$r_{\partial}>0$ such that the metric $g$ is a product metric
near the boundary, that is, it has the form
\begin{align}\label{eq.as.this.section}
 g = g_{\partial} + dt^2 , \text{\ on\ }
 \partial M\times [0, r_\partial ),
\end{align}
where $g_{\partial}$ is the metric induced by $g$ on $\partial M$.
 
We identify in what follows $\pa M \times \RR$ with the normal bundle
to $\pa M$ in $M$ as before for hypersurfaces, i.e.\ we identify
$(x,t)\in \pa M \times \RR$ with $t\nu_x$ in the normal bundle of
$\partial M$ in $M$.  Also, we shall identify $(x,t)\in \pa M\times
[0, r_{\pa})$ with $\exp^{\perp}(x, t)=\exp^{\perp}(t\nu_x) \in M$.

Our proof of the Poincar\'e inequality under the product metric
assumption on $\pa M \times [0, r_{\pa})$ is based on several
  intermediate results.  By decreasing $r_{\pa}$ we may assume that
  $\delta$ in Definition~\ref{hyp_bdd_geo}.(iv) and $r_{\pa}$ in
  Equation~\eqref{eq.as.this.section} are the same.

First, note that, by the product structure of $g$ near the boundary,
the submanifolds $\partial M\times \{t\}$ with $t\in [0,r_\partial)$
  are totally geodesic submanifolds of $M$ and that a geodesic in
  $\partial M\times [0,r_\partial)$ always has the form $c(t)=
    \exp^\perp(c_\partial (t), at)$ for some $a\in \mathbb R$ and some
    geodesic $c_\partial$ in $(\partial M, g_\partial)$.  This implies
    that a geodesic $c\colon [a,b]\to M$ with $c(a)\notin \partial M$
    and $c(b)\in \partial M$ cannot be extended to $[a,b+\epsilon]$
    for any $\epsilon>0$.

For a subset $A\subset M$ we define
\begin{align*}
  \dist(y,S) \define \inf_{x\in S} \{\dist(x,y)\}.
\end{align*}
Let $y \in M$. A \emph{shortest curve joining $y$ to $S \subset M$} is
by definition a rectifiable curve $\gamma\colon [a,b]\to M$ from $y$
to $S$ (that is, $\gamma(a) = y$, $\gamma(b) \in S$) such that no
other curve from $y$ to $S$ is shorter than $\gamma$. If a shortest
curve is parametrized proportional to arc length and its interior does
not intersect the boundary, then it is a geodesic. Such geodesics will
be called \emph{length minimizing geodesics}.

Remark \ref{rem_extend} shows that the following proposition is less
obvious than one might first think. In particular, the assumption that
we have a product metric near the boundary is crucial if $\pa_D M \neq
\pa M$.

\begin{proposition}\label{prop.a} We still assume that $(M, \partialDM)$ has 
finite width and the product structure assumption at the beginning of
Subsection~\ref{ssec.prod}. Then, for every $y\in M$, there is a
length minimizing smooth geodesic~$\gamma$ from~$y$ to~$\partialDM$.
\end{proposition}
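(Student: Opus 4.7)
I will first produce a length-minimizing curve $\gamma$ by a compactness argument in the bounded-geometry extension $\widehat M$ of $M$ (Lemma~\ref{lem_extend_M}), and then upgrade it to a smooth geodesic by using the product structure of $g$ near the boundary (Equation~\eqref{eq.as.this.section}) to rule out that $\gamma$ touches $\partialNM$ in its interior.

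\textbf{Existence of $\gamma$.} Set $d \define \dist(y, \partialDM)$, which is at most $R$ by the finite-width assumption. Choose a minimizing sequence of curves $\gamma_n\colon[0,L_n]\to M$, parametrized by arclength, with $\gamma_n(0)=y$, $\gamma_n(L_n) = z_n \in \partialDM$, and $L_n\searrow d$. Because $\widehat M$ is a complete Riemannian manifold without boundary, the Hopf--Rinow theorem makes the closed ball $\overline{B^{\widehat M}_{d+1}(y)}$ compact, and $M$ is a closed subset of $\widehat M$ by construction. After passing to a subsequence, $z_n\to z\in\partialDM$, and (Arzel\`a--Ascoli, after extending each $\gamma_n$ to a $1$-Lipschitz map on $[0, d+1]$) the $\gamma_n$ converge uniformly to a $1$-Lipschitz curve $\gamma\colon[0,d]\to M$ with $\gamma(0) = y$ and $\gamma(d) = z$. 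Lower semicontinuity of length gives $L(\gamma)\leq d$, hence $L(\gamma) = d$, so $\gamma$ is a shortest curve from $y$ to $\partialDM$.

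\textbf{Smoothness of $\gamma$.} At any point of $\gamma$ lying in the open interior of $M$, the curve $\gamma$ is locally a shortest curve between two interior points and is therefore a smooth geodesic by standard Riemannian theory. Moreover, $\gamma$ cannot meet $\partialDM$ at any $t_0 \in (0, d)$, as $\gamma|_{[0, t_0]}$ would then be a curve of length $t_0 < d$ from $y$ to $\partialDM$, contradicting the definition of $d$. The main obstacle is to show that $\gamma$ does not touch $\partialNM$ at any $t_0 \in (0,d)$; this is precisely where the product metric assumption is used. Suppose $\gamma(t_0)\in\partialNM$. For $\epsilon$ small enough, $\gamma|_{[t_0-\epsilon,t_0+\epsilon]}$ is a shortest curve in $M$ between its endpoints and lies in the product region $\pa M\times[0,r_\pa)$. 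In this region, the product form $g = g_\pa + dt^2$ forces the shortest curve between any two points with $h$-values $\geq 0$ to be the geodesic whose $\pa M$-component is a geodesic of $(\pa M, g_\pa)$ and whose $h$-component interpolates linearly. Writing $\gamma(s) = (c_\pa(s), h(s))$, the constraints $h(t_0) = 0$ and $h \geq 0$ together with linearity force $h\equiv 0$ on the full interval $[t_0-\epsilon, t_0+\epsilon]$, so $\gamma$ lies in $\partialNM$ on an open neighborhood of $t_0$. Hence the set $T \define \{t\in(0,d): \gamma(t)\in\partialNM\}$ is both open and closed in the connected interval $(0,d)$; since $\gamma(d)\in\partialDM$ excludes $T = (0,d)$, we conclude $T = \emptyset$. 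The edge case $y \in \partialNM$ is handled by the one-sided version of the same argument, which shows that $\gamma$ leaves $\partialNM$ immediately and is smooth at $y$. Therefore $\gamma$ is the required length-minimizing smooth geodesic from $y$ to $\partialDM$.
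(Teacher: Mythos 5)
Your proof is correct, but it takes a genuinely different route from the paper's. The paper runs a Hopf--Rinow-style forward construction: it picks a direction $v$ at $y$ via the Gauss lemma, shows by a closed-and-open argument that $\dist(\exp_y(tv),\partialDM)=r-t$ persists up to $t=r$, and invokes the product structure to ensure the forward geodesic never gets stuck on $\partialNM$. You instead produce a minimizing $1$-Lipschitz curve directly by Arzel\`a--Ascoli in the bounded-geometry extension $\widehat M$, and only afterwards upgrade it to a smooth geodesic; the decisive local observation is that in the collar the unique shortest curve between two points with nonnegative $t$-coordinate is the free product geodesic $(c_\pa,h)$ with $h$ affine, so a minimizer touching $\partialNM$ at an interior parameter would have to stay in $\partialNM$ on a neighborhood, and the connectedness argument for $T$ then excludes this. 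This cleanly decouples existence (soft compactness, no ODE-extension issues) from regularity (a purely local uniqueness argument), and it covers the case $y\in\partialNM$ uniformly, which the paper's proof explicitly restricts away by taking $y\in M\setminus\partialNM$. Three small points you should make explicit: your one-sided argument at $y\in\partialNM$ uses the already-established $T=\emptyset$ to rule out $h\equiv0$ on $[0,\epsilon]$; the same one-sided argument is also needed at the terminal endpoint $t=d$ to get smoothness up to $\partialDM$; and $\epsilon$ must be taken small enough that $2\epsilon<\min(r_\pa,\rinj(\pa M))$, so that the comparison product geodesic is the unique minimizer. None of these affects the validity of the argument.
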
 

Note that obviously we need here our requirement that $\partialDM$
intersects all connected components of $M$, see
Definition~\ref{def.finite.width}.
Also note that, in general, there may be more than one (geometrically
distinct) shortest geodesic $\gamma\colon [a,b]\to M$ joining $y$ to
$\partialDM$.  If $y \notin \partial M$, every such curve satisfies
$\gamma([a,b))\cap \partial M=\emptyset$ and
  $\gamma'(b)\perp\partialDM$.

\begin{proof}
The proof is analogous to the classical proof of the Hopf-Rinow
Theorem (which implies that in a geodesically complete manifold, any
two points are joined by a length minimizing geodesic, see Chapter 7
in \cite{doCarmo}). We follow Theorem~2.8 (Hopf-Rinow) in Chapter~7 of
the aforementioned book. For a given point $y\in M\setminus\partialNM$
we define $r\define \dist (y,\partialDM)$. We only have to consider
the case $y\not\in \partialDM$, and in this case $r>0$. Let $\delta <
\rinj(M)$. It follows from the Gauss lemma that the length of any
curve joining $y$ to a point of the ``sphere'' $S_\delta(y) \define \{
\exp_y(\delta v) \vert\, |v|=1 \}$ is at least $\delta$, with the
infimum being attained by the image of the straight line under
$\exp_y$, see Chapter~3 in \cite{doCarmo} for details.  The function
$x\mapsto \dist (x,\partialDM)$ is continuous, and thus we can choose
a point $x_0\in S_\delta(y)$ with
\begin{align*}
  \dist (x_0,\partialDM)= \min\{\dist (x,\partialDM)\mid x \in
  S_\delta(y)\}.
\end{align*}
Every curve from $y$ to $\partialDM$ will intersect $S_\delta(y)$ somewhere,
thus we obtain
\begin{align}\label{eq.dist.min}
  \dist (y,x_0)+\dist (x_0,\partialDM)= \dist (y,\partialDM).
\end{align}
Let $|v| = 1$ with $\exp_y(\delta v) = x$.  We now claim that
\begin{align}\label{eq.claim}
 \exp_y(tv) \text{ is defined and }\dist (\exp_y(tv),\partialDM)=r-t
\end{align}
holds for all $t\in (0,r)$.  The proof is again analogous to the proof
of the theorem by Hopf-Rinow in \cite{doCarmo}. Let $A\define \{ t\in
[0,r]\mid \eqref{eq.claim}\text{ holds for }t\}$. Obviously $A$ is
closed in $[0,r]$, and from the triangle inequality we see that $t\in
A$ implies $\tilde t\in A$ for all $\tilde t\in [0,t]$. So $A=[0,b]$
for some $b\in[0,r]$. Further \eqref{eq.dist.min} implies $b\geq
\delta$. Due to the product structure near $\partial_NM$ the geodesic
$[0,b]\ni t\mapsto \exp_y(tv)$ does not hit $\partial_NM\times
      [0,d(y))$ for some $d(y)$ small enough, as otherwise this would
violate \eqref{eq.claim}.  We will show that for any $s_0\in A$,
$s_0<r$, there is a $\delta'>0$ with $s_0+\delta'\in A$. To this end,
we repeat the above argument for $y'\define \exp_y(s_0v)$ instead of
$y$. We obtain $\delta'>0$ and $x_0'\in S_{\delta'}(y')$ such that
$\dist (y',x_0')+\dist (x_0',\partialDM)= \dist (y',\partialDM)$, and
we write $x_0'=\exp_{y'}(\delta'v')$. This implies
\begin{align}\label{tripel.eq}
  \dist (y,y')+ \dist (y',x_0') + \dist (x_0',\partialDM) = \dist
  (y,\partialDM),
\end{align}
and then we get $\dist (y,y')+ \dist (y',x_0')= \dist (y,x_0')$. We
have shown that the curve
\begin{align*}
 \gamma(t) \define
  \begin{cases}
    \ \exp_y(tv) & \ 0 \leq t\leq s_0\\ \ \exp_{y'}((t-s_0)v') & \ s_0
    \leq t\leq s_0+\delta'\\
  \end{cases}
\end{align*}
is a shortest curve from $y$ to $x_0'$, and thus the geodesic is not
broken in $y'$. In other words
\begin{align*}
  \exp_y(tv)=\exp_{y'}((t-s_0)v') \quad s_0\leq t\leq
  s_0+\delta'.
\end{align*}
Using \eqref{tripel.eq} once again, we see that $s_0+\delta'\in A$.

We have seen that $b=\max A=r$. We obtain $\exp_y(rv) \in \partialDM$,
which gives the claim.  Moreover, the first variation formula implies
$\gamma'(r)\perp \partialDM$.
\end{proof}

\begin{proposition}\label{prop.fermi}
There is a continuous function $L\colon \partial_{D} M \to (0,\infty]$
such that the restriction of $\exp^\perp$ to
  \begin{align*}
    \{(x,t)\,|\ 0\leq t \leq L(x),\, x\in \partialDM \}
  \end{align*}
is surjective, and such that the restriction of $\exp^\perp$ to
  \begin{align*}
    \{(x,t)\,|\ 0< t<L(x),\, x\in \partialDM \}
  \end{align*}
is an embedding. Furthermore 
  \begin{align*}
    \dist (\exp^\perp(x,t), \partialDM) = t
  \end{align*}
if $0\leq t \leq L(x)$. The set
 \begin{align*}
    M_S \define \exp^\perp\left(\{(x,t)\,|\ t=L(x),\, x\in \partialDM
    \}\right)
 \end{align*}
is of measure zero.
\end{proposition}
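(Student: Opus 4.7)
The plan is to set, for each $x \in \partialDM$,
\[
L(x) \define \sup\{t \ge 0 : \gamma_x \text{ is defined on } [0,t] \text{ and } \dist(\gamma_x(s), \partialDM) = s \text{ for all } s \in [0,t]\},
\]
where $\gamma_x(s) \define \exp^\perp(x, s)$. The product structure on $\partial M \times [0, r_\partial)$ gives $\dist(\gamma_x(s), \partialDM) = s$ throughout the collar, hence $L(x) \geq r_\partial > 0$, and $L$ takes values in $(0,\infty]$. Continuity of $\dist(\cdot,\partialDM)$ propagates the distance identity from $[0, L(x))$ to the endpoint $t = L(x)$ itself.

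For surjectivity of $\exp^\perp$ on $\{(x,t) : 0 \le t \le L(x)\}$, I would apply Proposition~\ref{prop.a}: any $y \in M$ is the endpoint of a length-minimizing geodesic from $y$ to $\partialDM$, which by that proposition meets $\partialDM$ perpendicularly, so after reversing parameterization it has the form $s \mapsto \exp^\perp(x, s)$ for some $x \in \partialDM$, with $y = \exp^\perp(x, \dist(y, \partialDM))$ and $\dist(y, \partialDM) \leq L(x)$ (every initial segment is still length-minimizing). For injectivity on $\{0 < t < L(x)\}$, suppose $y = \exp^\perp(x, t) = \exp^\perp(x', t')$ with $t < L(x)$ and $t' < L(x')$; the distance identity forces $t = t' = \dist(y, \partialDM)$, so both $\gamma_x$ and $\gamma_{x'}$ are length-minimizing from $y$ to $\partialDM$. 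If $\gamma_x'(t) \neq -\gamma_{x'}'(t)$, a standard first-variation (``corner-cutting'') argument produces a curve from $\gamma_{x'}(t-\epsilon)$ to $\gamma_x(t+\epsilon)$ strictly shorter than $2\epsilon$, giving $\dist(\gamma_x(t+\epsilon), \partialDM) < t + \epsilon$, which contradicts $t < L(x)$. If $\gamma_x'(t) = -\gamma_{x'}'(t)$, then $\gamma_x$ extended past $y$ reaches $x' \in \partialDM$ at time $2t$, so $\dist(\gamma_x(t+\epsilon), \partialDM) \le t - \epsilon$, again contradicting $t < L(x)$. Hence $x = x'$ and the map is injective. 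Moreover, $t < L(x)$ rules out any focal point of $\partialDM$ along $\gamma_x$ before $t$ (otherwise the distance-realizing property would fail just past the focal point), so $\exp^\perp$ is an immersion on $\{0 < t < L(x)\}$. An injective immersion between manifolds of equal dimension $m$ is automatically an embedding onto an open subset of $M$.

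Continuity of $L$ is the main technical obstacle. I would argue the two one-sided bounds separately. For lower semi-continuity at $x_\infty$ with $L(x_\infty) > t_0$, the uniform convergence $\gamma_{x_n} \to \gamma_{x_\infty}$ on $[0, t_0]$ (from smoothness of $\exp^\perp$) together with the uniform-in-$n$ control on competing curves to $\partialDM$ in a compact neighborhood of $\gamma_{x_\infty}([0,t_0])$ forces $\dist(\gamma_{x_n}(s), \partialDM) = s$ on $[0, t_0]$ for $n$ large, so $L(x_n) \ge t_0$. For upper semi-continuity, $L(x_\infty)$ is the first time at which either (a) a focal point of $\partialDM$ occurs along $\gamma_{x_\infty}$, or (b) a second length-minimizing geodesic from $\gamma_{x_\infty}(L(x_\infty))$ to $\partialDM$ starting at some $\tilde x \neq x_\infty$ appears; (a) persists for perturbed starting points by continuous dependence of the Jacobi equation on initial data, and in (b), for $x_n$ near $x_\infty$ the ``competitor'' $\tilde x$ gives $\dist(\gamma_{x_n}(L(x_\infty)+\epsilon), \partialDM) < L(x_\infty) + \epsilon$, so $L(x_n) \le L(x_\infty) + \epsilon$ for $n$ large. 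Finally, $M_S$ is the image under the locally Lipschitz map $\exp^\perp$ of the continuous graph $\{(x, L(x)) : L(x) < \infty\} \subset \partialDM \times \RR$, a set of Hausdorff dimension at most $m-1$; its image in $M$ has Hausdorff dimension at most $m-1$ in an $m$-dimensional manifold, hence Lebesgue measure zero.
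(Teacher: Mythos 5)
Your proof follows the paper's route closely---define $L(x)$ as the cut distance to $\partialDM$ along the normal geodesic, get surjectivity from Proposition~\ref{prop.a}, injectivity and the immersion property from a corner-cutting and focal-point argument, and the embedding from the inverse function theorem; the paper delegates the geodesic details to do Carmo's Chapter~13 whereas you spell them out. However, two of your steps are flawed. In the injectivity argument the dichotomy ``$\gamma_x'(t)\neq -\gamma_{x'}'(t)$, corner-cutting'' versus ``$\gamma_x'(t)=-\gamma_{x'}'(t)$, extend past $y$'' is incomplete: in the sub-case $\gamma_x'(t)=\gamma_{x'}'(t)$ the concatenated path through $y$ is a smooth geodesic arc of length $2\epsilon$, so there is no competitor of length $<2\epsilon$ and the corner-cutting claim is false as stated. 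That sub-case must be split off and handled directly: $\gamma_x'(t)=\gamma_{x'}'(t)$ forces $\gamma_x\equiv\gamma_{x'}$ by uniqueness of geodesics, hence $x=x'$, which is the conclusion you want anyway.

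The measure-zero argument is also wrong as written. The graph of a merely continuous function from an $(m-1)$-manifold to $\RR$ need \emph{not} have Hausdorff dimension $\leq m-1$; already for $m=2$ the Weierstrass function has a graph of Hausdorff dimension strictly larger than $1$, and you have only established that $L$ is continuous, not H\"older or Lipschitz. What does hold---and what the paper uses---is that the graph of any continuous (indeed measurable) function $\partialDM\to\RR$ has Lebesgue measure zero in $\partialDM\times\RR$ by Fubini, and that a $C^1$ (hence locally Lipschitz) map between $m$-dimensional manifolds sends Lebesgue-null sets to Lebesgue-null sets. Replacing the Hausdorff-dimension bound by this measure-theoretic observation repairs the step and gives that $M_S$ is null.
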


\begin{proof}
For $x\in \partialDM$, let us consider the geodesic $\gamma_x\colon
I_x\subset [0,\infty)\to M$ with $\gamma_x(0)=x$ and
  $\gamma_x'(0)=\nu_x$, defined on its maximal
domain $I_x$. We choose $L(x)\in I_x $ as the maximal number such that
$\gamma_x|_{[0,L(x)]}$ realizes the minimal distance from $x$ to
$\gamma_x(t)$ if $0\leq t \leq L(x)$.  Let $y \in M$ and $d = \dist(y,
\partialDM)$. From the Proposition~\ref{prop.a} above, we see that a
shortest curve from $y$ to $\partialDM$ exists; in other words, there
is $x\in \partialDM$ with $y = \exp\sp{\perp}(x, d)$, where
$\exp\sp{\perp}(x, t) \define \exp_x(t \nu_x)$.  Therefore, the
restriction of $\exp^\perp$ to $\{(x,t)\,|\,0\leq t \leq L(x), x\in
\partialDM \}$ is surjective.  The continuity of $L$ is analogous to
\cite[Chapiter~13, Proposition~2.9]{doCarmo}.  As the geodesics
$t\mapsto \gamma_x(t)$ are minimizing for $0\leq t\leq L(x)$, it
follows similar to \cite[Chapiter~13, Proposition~2.2]{doCarmo} that
there is a unique shortest curve from $\gamma_x(t)$ to $\partialDM$ if
$0\leq t < L(x)$, and that the restriction of $\exp^\perp$ to
$\{(x,t)\,|\,0< t<L(x), x\in \partialDM \}$ is an injective
immersion. From the inverse function theorem we see that this
injective immersion is a homeomorphism onto its image, thus it is an
embedding.

The subset $M_S$ of $M$ is closed and has measure zero as it is the
image of the measure zero set $\{(x,L(x))\,|\, x\in \partialDM \}$
under the smooth map $\exp^\perp$.
\end{proof}

\begin{remark}
One usually defines the \emph{cut locus $\mathcal{C}(S)$ of a subset
  $S\subset M$} as the set of all points $x$ in the interior of $M$
for which there is a geodesic $\gamma\colon [-a, \epsilon)\to M$ with
  $\gamma(-a)\in S$, $\gamma(0)=x$, $\gamma$ being minimal for all
  $t\in (-a,0)$, but no longer minimal for $t>0$.  The name ``cut
  locus'' comes from the fact that this is the set where several
  shortest curves emanating from $S$ will either intersect classically
  or in an infinitesimal sense.  The relevance of this concept is that
  the set $M_S$ introduced in Proposition~\ref{prop.fermi} satisfies
  $M_S = \partialNM \cup \maC(\partialDM)$.
\end{remark}

If $H\colon T_pM\to T_qM$ is an endomorphism, then we express it in
an orthonormal basis as a matrix $A$. The quantity
\begin{align}\label{def.detH}
   |\det H| \define |\det A|
\end{align}
is then well-defined and does not depend on the choice of orthonormal
base.

For $x\in \partialDM$, $0\leq t\leq L(x)$, let $v(x,t)$, be the volume
distortion of the normal exponential map, that is,
\begin{align*}
 v(x,t) \define |\det d_{(x,t)}\exp^\perp|\,,
\end{align*}
with the absolute value of the determinant defined using a local
orthonormal base, as in \eqref{def.detH}. Clearly, $v(x, 0) = 1$.

\begin{proposition}\label{prop.distorsion}(Special
  case of \cite[Lemma~4.5]{Sakurai}) We assume that $M^{m}$ is a
  manifold with boundary and Ricci curvature bounded from
  below. Assume that the metric is a product near the boundary. We
  also assume that there exists $R>0$ with $\dist(x, \partialDM)<R$
  for all $x\in M$.  Then there is a constant $C>0$ such that, for all
  $x\in \partial M$ and all $0\leq s\leq t\leq L(x)$, we have
  \begin{align*}
  \frac{v(x,t)}{v(x,s)}\leq C.
  \end{align*}
The constant $C$ can be chosen to be $e^{(m-1)R\sqrt{|c|}}$, where
$(m-1)c$ is a lower bound for the Ricci curvature of $M$.
 \end{proposition}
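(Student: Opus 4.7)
My plan is to write $v(x,t)$ in terms of Jacobi fields along the normal geodesic $\gamma_x(t)\define\exp^\perp(x,t)$ and then control the logarithmic derivative $(\log v)'(t)$ by a standard Riccati comparison driven by the lower Ricci bound.

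First, I would fix $x\in\partialDM$ and choose an orthonormal basis $e_1,\dots,e_{m-1}$ of $T_x\pa M$. Let $J_i$ denote the Jacobi field along $\gamma_x$ with $J_i(0)=e_i$ and $J_i'(0)=0$. The initial condition $J_i'(0)=0$ comes from the product-metric hypothesis near the boundary, which forces $\pa M$ to be totally geodesic in $M$ (its second fundamental form vanishes). Since $d\exp^\perp|_{(x,t)}(e_i,0)=J_i(t)$ and $d\exp^\perp|_{(x,t)}(0,\pa_t)=\gamma_x'(t)$ is a unit vector orthogonal to the $J_i(t)$ by the Gauss lemma, one obtains $v(x,t)=\sqrt{\det\bigl(g(J_i(t),J_j(t))\bigr)}$. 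Writing $A(t)$ for the shape operator of the equidistant hypersurface at $\gamma_x(t)$ (well-defined and smooth for $t<L(x)$ by Proposition~\ref{prop.fermi}, since there are no focal points before $L(x)$) and $H(t)\define\operatorname{tr} A(t)$, a standard computation yields $(\log v)'(t)=H(t)$ together with the initial value $H(0)=0$.

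Next, the traced Riccati equation for $A(t)$ combined with the Cauchy--Schwarz estimate $\operatorname{tr}(A(t)^2)\geq H(t)^2/(m-1)$ gives the scalar inequality
\begin{equation*}
H'(t)\leq -\operatorname{Ric}(\gamma_x'(t),\gamma_x'(t))-\frac{H(t)^2}{m-1}\leq -(m-1)c-\frac{H(t)^2}{m-1},\qquad H(0)=0.
\end{equation*}
Comparing with the explicit solution $h(t)=(m-1)\sqrt{|c|}\tanh(\sqrt{|c|}\,t)$ of the corresponding ODE with equality in the case $c\leq 0$ (the case $c>0$ gives $H(t)\leq 0$ and is easier), a routine Riccati comparison on $[0,L(x)]$ produces the uniform bound $H(t)\leq (m-1)\sqrt{|c|}$.

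Integrating from $s$ to $t$ then yields
\begin{equation*}
\log\frac{v(x,t)}{v(x,s)}=\int_s^t H(u)\,du\leq (m-1)\sqrt{|c|}\,(t-s)\leq (m-1)\sqrt{|c|}\,R,
\end{equation*}
where the last inequality uses Proposition~\ref{prop.fermi}: it gives $L(x)=\dist(\gamma_x(L(x)),\partialDM)$, which is bounded by $R$ by the finite-width hypothesis. This yields the stated constant $C=e^{(m-1)R\sqrt{|c|}}$. The point requiring most care is the validity of the Riccati analysis on the whole interval $[0,L(x)]$, which I would justify via Proposition~\ref{prop.fermi} (absence of focal points for $t<L(x)$), together with the observation that the product-metric hypothesis is invoked only to secure the clean initial condition $H(0)=0$ and is not needed to control the geometry past $r_\partial$.
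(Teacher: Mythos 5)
Your proof is correct and is precisely the standard Jacobi-field/Riccati-comparison argument underlying the Heintze--Karcher inequality, which the paper cites (via \cite{karcher:89} and \cite{Ballmann:Riccati}) without reproducing the details. All the key points are in place: the product-metric hypothesis makes $\partial M$ totally geodesic and gives $J_i'(0)=0$, $H(0)=0$; the absence of focal points on $[0,L(x))$ from Proposition~\ref{prop.fermi} validates the Riccati analysis and the identity $(\log v)'=H$; and the bound $t-s\leq L(x)=\dist(\gamma_x(L(x)),\partialDM)<R$, also from Proposition~\ref{prop.fermi} and the finite-width assumption, yields the stated constant.
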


This proposition is essentially a special case of the Heintze--Karcher
inequality \cite{karcher:89}.  We also refer to the section on
Heintze--Karcher inequalities in \cite{Ballmann:Riccati} for a proof
of the full statement, some historical notes, and some similar
inequalities.

\subsection{Proof in the case of a manifold with product
  boundary}\label{ssec.poinprod} We keep all the notations introduced
in the previous subsection, i.\thinspace e.\ Subsection
\ref{ssec.prod}. In particular, we assume that $g$ is a product metric
near the boundary.

\begin{proof}[Proof of Theorem~\ref{thm_Poin_intro}
    for $g$ a product near the boundary] We set
\begin{align*}
 \gamma_x(s) \define \exp^{\perp}(x,s) \define \exp_x^M(s\nu_x) \,,
\end{align*}
to simplify the notation. Recall that $\dvol_g$ denotes the volume
element on $M$ associated to the metric $g$
and $\dvol_{g \pa}$ denotes the corresponding volume form on
  the boundary. Let us assume first that
$p < \infty$. Since
\begin{align}\label{eq.bdry}
\int_M \ne{u}^{p} \dvol_g = \int_{\partialDM} \int_0^{L(x)}
\ne{u(\gamma_x(s))}^p\, v(x,s)\,ds \dvol_{g_\partial} \,,
\end{align}
and $\ne{\nabla{u}}\geq \ne{\nabla_{\gamma_x'(s)}{u}}$, it suffices to
find a $c > 0$ (independent on $x$) such that
\begin{align}\label{eqqqq} 
  \int_0^{L(x)} \ne{\nabla_{\gamma_x'(s)} u }^p \, v(x,s)\,ds \ \geq \ c
  \int_0^{L(x)} \ne{u(\gamma_x(t))}^p \, v(x,t)\,dt
\end{align}
for all $x\in \partialDM$. Indeed equations \eqref{eq.bdry} and
\eqref{eqqqq} yield Theorem~\ref{thm_Poin_intro} by integration over
$\partialDM$ (recall that for now $p < \infty$).

Let $f(s)\define u(\gamma_x(s))$, for some fixed $x\in M\setminus
(M_S\cup \partialDM)$. Then $f'(s) = \nabla_{\gamma_x'(s)} u$. Fix
$t\in [0,L(x)]$ and let $q$ be the exponent conjugate to $p$, that is,
$p^{-1} + q^{-1} = 1$. Using $f(t)=f(0) + \int_0^t f'(s) ds$, we
obtain, for $p < \infty$,
\begin{align*} 
   |f(t)|^p v(x,t)& \leq \left(|f(0)| + \int_0^t |f'(s)|\, ds\right)^p
   v(x,t)\\
& \leq 2^{p-1}\left[\, |f(0)|^p + \left ( \int_0^t |f'(s)|\,
     ds\right)^p \, \right ] v(x,t) \\
 & \leq 2^{p-1}|f(0)|^p v(x,t) + 2^{p-1}t^{p/q} \int_0^t |f'(s)|^p\,
   v(x,t)\,ds\\
  &\leq C |f(0)|^p + C L(x)^{p/q} \int_0^{L(x)} |f'(s)|^p\, v(x,s)\,ds
   \\
 & \leq C |f(0)|^p + \, C R^{p/q} \int_0^{L(x)} |
   \nabla_{\gamma_x'(s)} u |^p \, v(x,s)\,ds.
\end{align*}
We have used here the fact that $v(x, 0) = 1$ and, several times,
Proposition \ref{prop.distorsion}.  Hence, integrating once more with
respect to $t$ from $0$ to $L(x) \le R$, we obtain
\begin{equation*} 
  \int_0^{L(x)} |f(t)|^p v(x,t) dt \leq C R |f(0)|^p + C R^{p}
  \int_0^{L(x)} |\nabla_{\gamma_x'(s)} u |^p v(x,s)ds ,
\end{equation*}
which gives \eqref{eqqqq} by integration with respect to
$\dvol_{g_\partial}$,  and
hence our result for $p < \infty$. The case $p = \infty$ is
simpler. Indeed, it suffices to use instead
\begin{equation*} 
  |f(t)| \leq |f(0)| + \int_0^t |f'(s)| ds \leq |f(0)| + t \Vert
  f'\Vert _{L^\infty}
 \leq |f(0)| + R \Vert d u\Vert _{L^\infty(M)}
\end{equation*}
By taking the `sup' with respect to $t$ and $x \in \partialDM$, we
obtain the result.
\end{proof}

\subsection{The general case}\label{ssec.poingen}
We now show how the general case of the Poincar\'{e} inequality for a
metric with bounded geometry and finite width on $M$ can be reduced to
the case when the metric is a product metric in a small tubular
neighborhood of the boundary, in which case we have already proved the
Poincar\'e inequality.

The general case of Theorem~\ref{thm_Poin_intro} follows directly from
the special case where~$g$ is product near the boundary and the
following lemma.  Recall that we identify $\pa M \times [0,
  r_{\partial})$ with its image in $M$ via the normal exponential map
  $\exp^{\perp}$.

\begin{lemma}\label{lem_prod_bound} 
Let $(M^{m},g)$ be a manifold with boundary and bounded geometry. Let
$g_\partial$ be the induced metric on the boundary $\partial M$. Then
there is a metric $g'$ on $M$ of bounded geometry such that
\begin{enumerate}
\item $g'=g_\partial+ dt^2$ on $\partial M\times [0, r']$ for some
  $r'\in (0, r_\partial)$
\item $g$ and $g'$ are equivalent, that is, there is $C>0$ such that
  $C^{-1}g \leq g' \leq Cg$.
\end{enumerate}
In particular, the norms $| \cdot |_g$ and $| \cdot |_{g'}$ on
$E$-valued one-forms, respectively on the volume forms for $g$ and
$g'$, are equivalent.
\end{lemma}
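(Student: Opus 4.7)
The plan is to modify $g$ in a small tubular neighborhood of $\partial M$ so that it becomes a literal product metric there, using a smooth cut-off in the normal direction. First, I would pull back $g$ via the normal exponential map to $\partial M \times [0, r_\partial)$, obtaining by the Gauss lemma an expression $g = h_t + dt^2$, where $h_0 = g_\partial$ and the family $t \mapsto h_t$ has all $t$-derivatives uniformly bounded in Fermi coordinates thanks to bounded geometry and Remark~\ref{rem_FC_1}.

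Next, I would pick $0 < r' < r'' < r_\partial$ and a smooth monotone function $\phi \colon [0, r_\partial) \to [0, r_\partial)$ with $\phi(t) = 0$ for $t \le r'$ and $\phi(t) = t$ for $t \ge r''$, and define
\[
 g' \define
 \begin{cases}
  h_{\phi(t)} + dt^2 & \text{on } \partial M \times [0, r_\partial), \\
  g & \text{on } M \setminus \bigl(\partial M \times [0, r'')\bigr).
 \end{cases}
\]
By construction $g' = g_\partial + dt^2$ on $\partial M \times [0, r']$ and $g' = g$ outside $\partial M \times [0, r'')$, so the two pieces match smoothly. To establish the equivalence $C^{-1} g \le g' \le C g$, I would write $h_s - h_0 = s \int_0^1 \partial_\tau h_\tau |_{\tau = \sigma s}\, d\sigma$. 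Since $\partial_t h_t$ is controlled at $t=0$ by the second fundamental form $\II$ of $\partial M$ in $M$ and, more generally, by its iterated covariant derivatives (all uniformly bounded by Definition~\ref{def_bdd_geo}), I would obtain $|h_s - h_0|_{h_0} \le C s$ uniformly in $x \in \partial M$. Shrinking $r''$ if necessary then yields uniform comparability of $h_{\phi(t)}$ with $h_t$ for all $t \in [0, r_\partial)$, which gives the claimed equivalence of $g$ and $g'$.

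Finally, I would verify that $(M, g')$ has bounded geometry using the characterization of Theorem~\ref{theo_crit_bdd_geo}. In Fermi coordinates, the coefficients of $g'$ are obtained from those of $g$ by composing with $\phi(t)$, so all their partial derivatives remain uniformly bounded; this yields uniform bounds on the curvature $R^{g'}$ and its covariant derivatives, hence condition (B). Since $g'$ is a literal product on $\partial M \times [0, r']$, the second fundamental form of $\partial M$ in $(M, g')$ vanishes there (trivially bounded) and condition (N) is immediate with $\exp^{\perp, g'}(x, t) = (x, t)$ on the product region. The main obstacle I anticipate is verifying condition (I), the positive lower bound on the interior injectivity radius of $g'$. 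To do this I would first build a boundaryless extension $\widehat{M}'$ of $(M, g')$ as in Lemma~\ref{lem_extend_M}, which is straightforward precisely because $g'$ is already a product near $\partial M$; then I would apply Cheeger's Theorem~\ref{thm_cheeger}, where the required uniform lower bound on the volumes of $g'$-balls follows from the corresponding bound for $g$-balls via the equivalence $C^{-1} g \le g' \le C g$. Theorem~\ref{theo_crit_bdd_geo} then implies that $(M, g')$ is of bounded geometry, and the equivalence of the pointwise norms on $E$-valued one-forms and of the volume forms is a tautological consequence of $C^{-1} g \le g' \le C g$.
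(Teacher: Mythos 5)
Your proposal is correct and reaches the same conclusion, but it constructs $g'$ in a genuinely different way. The paper's proof takes a pointwise convex combination of metric tensors,
\begin{equation*}
  g'(x,t) \define \eta(t)\, g(x,t) + \bigl(1-\eta(t)\bigr)\bigl(g_\partial(x) + dt^2\bigr),
\end{equation*}
using a scalar cut-off $\eta$ that vanishes on $[0,r']$ and equals $1$ on $[2r',3r']$, and then estimates $g'_{ij} - g_{ij}$ directly in Fermi coordinates via the Taylor expansion $g_{ij}(x,t) = g_{ij}(x,0) + t\,g_{ij,t}(x,0) + O(t^2)$ together with the uniform bounds of Remark~\ref{rem_FC_1}. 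You instead keep the Gauss-lemma form $g = h_t + dt^2$ and reparametrize the family in the normal direction, setting $g' = h_{\phi(t)} + dt^2$. Both constructions work, and both reduce the comparability of $g$ and $g'$ to the same first-order bound $|h_s - h_0| \le Cs$ coming from the uniformly bounded second fundamental form and the bounded geometry of $g$; the quantitative content is identical. Your reparametrization has the slight aesthetic advantage that each $h_{\phi(t)}$ is already positive definite, so you never need to invoke convexity of the cone of metrics, and the new metric stays manifestly in Gauss-lemma form, which makes the verification of conditions (N) and (B) of Theorem~\ref{theo_crit_bdd_geo} transparent.

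You also go further than the paper in one respect: the paper's proof of this lemma establishes $g' = g_\partial + dt^2$ near the boundary, the two-sided equivalence $C^{-1}g \le g' \le Cg$, and the equivalence of volume and one-form norms, but it does not explicitly verify that $(M,g')$ has bounded geometry (this is needed downstream in the product-metric case of the Poincar\'e inequality, at least for a lower Ricci bound in Proposition~\ref{prop.distorsion}). Your outline via Theorem~\ref{theo_crit_bdd_geo} --- conditions (B) and (N) immediate from the explicit Fermi-coordinate form, condition (I) via an extension as in Lemma~\ref{lem_extend_M} together with Cheeger's Theorem~\ref{thm_cheeger}, with the volume lower bound inherited from $g$ through the metric equivalence --- is sound and supplies this missing piece. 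The only thing worth spelling out is that condition (B) also involves $\II$ of $\partial M$ in $(M,g')$ and all its $\nabla^{\partial M}$-derivatives; you correctly note these vanish on the product collar and hence are trivially bounded.
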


\begin{proof}[Proof of Lemma~\ref{lem_prod_bound}]
Let $0 < r' < r_{FC}/4 \define \frac14 \min\left\{\, \frac{1}{2} \rinj
(\partial M), \, \frac{1}{4} \rinj (M),\, \frac{1}{2} r_\partial\,
\right\}$ be small enough, to be specified later. Here $r_{FC}$ is as in
the choice of our Fermi coordinates in Definition~\ref{FC-chart}.  Let
$\eta\colon [0,3r']\to [0,1]$ be a smooth function with
$\eta|_{[0,r']}=0$ and $\eta|_{[2r',3r']}=1$. We set $g'(x,t)\define
\eta(t) g(x,t) + (1-\eta(t))(g_\partial (x) + dt^2)$ for $(x,t)\in
\partial M\times [0, 3r']$ and $g'=g$ outside $\partial M\times [0,
  3r']$. By construction $g'$ is smooth and is a product metric on
$\pa M \times [0, r']$.  The rest of the proof is based on the use of
Fermi coordinates around any $p \in \pa M$ to prove that $g$ and $g'$
are equivalent for $r'$ small enough.

Then, in the Fermi coordinates $(x, t) \define \kappa_p^{-1}$ around
$p\in \partial M$, see Equation~\eqref{eq.FC-chart}, we have
$g_{ij}(x,t) = g_{ij}(x,0)+tg_{ij,t}(x,0)+O(t^2)$, $g_{it}(x,0)=0$ for
$i\neq t$, $g_{tt}(x,t)=1$, and $(g_\partial)_{ij}(x)= g_{ij}(x,0)$
for $i,j\neq t$. Thus,
\begin{align}\label{gij}
 g'_{ij}(x,t) - g_{ij}(x,t) \, = \
 \begin{cases}
  -(1-\eta(t))(tg_{ij,t}(x,0)+O(t^2)) \ & \ \text{ if } (i, j) \neq
  (t, t)\\
  \ \ \ 0 \ \ \ \ & \ \text{ otherwise}.
 \end{cases}
\end{align}
Since $g_{ij,t}$ is uniformly bounded by Remark~\ref{rem_FC_1}, we
obtain $|g'_{ij}(x,t)-g_{ij}(x,t)|\leq tC$, for $(x,t)\in
B^{m-1}_{r'}\times [0,r')$, where the constant $C$ is independent of
the chosen~$p$. We note that, in these coordinates, the metric $g$ is
equivalent to the Euclidean metric on $B^{m-1}_{r'}(0) \times [0, r')
  \subset \RR^{m}$ in such a way that the equivalence constants do not
  depend on the chosen $p$. This can be seen from
\begin{align*}
 \big |g_{ij}(x,t)-g_{ij}(0,0) \big | & = |g_{ij}(x,t)-\delta_{ij}|
 \\
 &\leq \sup | \nabla_{(x,t)} g_{ij}(x,t)| \Vert (x,t)\Vert +O(t^2)
 \leq Cr'+O(t^2),
\end{align*}
where $C$ is the uniform bound for $\nabla_{(x,t)} g_{ij}(x,t)$, which
is finite by the bounded geometry assumption. Moreover, $O(t^2)\leq
ct^2$ with $c$ depending on the uniform bound of the second
derivatives on $g_{ij}(x,t)$ and $r_{FC}$.  Let $X$ be a vector in
$(x,t)$. Then
\begin{align*}
 \big |g(X,X)-|X|^2 \big | = \Big| \, \sum_{ij}
 (g_{ij}-\delta_{ij}) X^iX^j \, \Big | \ \leq \ Cr' |X|^2 \,.
\end{align*}
Thus, for $r'$ such that $Cr'<1$, it follows that $g$ and the
Euclidean metric are equivalent on the chart around $p$ in such a way
that the constants do not depend on $p$.  Similarly, we then obtain
\begin{align*}
 \big| g'(X,X)-g(X,X) \big | \ & = \ \Big| \, \sum_{ij}
 (g'_{ij}(x,t)-g_{ij}(x,t)) X^iX^j \, \Big | \\
 & \leq \ r' C |X|^2\leq r'C (1-Cr')^{-1} g(X,X)\,.
\end{align*}
Hence, $g'$ and $g$ are equivalent for $r$ small.

In particular, $|\det g_{ij}(x,t)-\det g'_{ij}(x,t)|\leq c t$ for a
positive $c$ independent of $x$, $p$, and $t$. Since $M$ has bounded
geometry, $\det g_{ij}$ is uniformly bounded on all of $M$ both from
above and away from zero. Therefore the volume forms for $g$ and $g'$
are equivalent.

An estimate similar to \eqref{gij} holds for $ (g')^{ij}(x,t) -
g^{ij}(x,t)$. Together with the relation $|\alpha|_g^2(p)=\sum_{i,j}
g^{ij}(p) \alpha_p(e_i)\alpha_p(e_j)$ for a one-form $\alpha$, this
gives the claimed result for the one-forms
\end{proof}

\begin{remark}
The proof implies that the constant $c$ in the Poincar\'e inequality
can be chosen to only depend on the bounds for~$R^M$ and its
derivatives, on the bounds for~$\II$, on~$r_{FC}$
(Definition~\ref{FC-chart}), on $p \in [1, \infty]$, and on the width
of $(M, \pa_D M)$. See also \cite{Sakurai}.
\end{remark}

With Lemma \ref{lem_prod_bound}, the proof of Theorem
\ref{thm_Poin_intro} is now complete.

\section{Invertibility of the Laplace operator}
\label{sec.three}

We now proceed to apply our Poincar\'e inequality to the study of the
spectrum of the Laplace operator with mixed boundary conditions and to
its invertibility in the standard scale of Sobolev spaces. In
  Subsection \ref{ssec.4.1} $M$ will be an arbitrary Riemannian
  manifold with boundary. However, in Subsections \ref{ssec.4.2} and
  \ref{ssec.4.3}, we will resume our assumption that $M$ has bounded
  geometry.

\subsection{Lax--Milgram, Poincar\'e, and well-posed\-ness in 
energy spaces\label{ssec.LaxMilgram}} 
\label{ssec.4.1}
In this subsection, we discuss the relation between the Poincar\'e
inequality and well-posed\-ness in $H^{1}$ for the Poisson problem
with suitable mixed boundary conditions. {\em We {\em do not} assume,
  in this subsection, that $M$ has bounded geometry, except when
  stated otherwise.} The bounded geometry assumption will be needed,
however, for one of the main results of this paper, which is the
well-posed\-ness of the Poisson problem with suitable mixed boundary
conditions on manifolds with boundary and finite width, see
Definition~\ref{def.finite.width} and Theorem~\ref{thm.H1new} below.
We define the semi-norms $|u|_{W^{1,p}(M)} \define \Vert
du\Vert_{L^p(M;T^*\!M)}$ and $|u|_{H^1(M)}\define |u|_{W^{1,2}(M)}$.

\begin{definition}\label{def.pPoincare}
We say that $(M, \partialDM)$ \emph{satisfies the $L^p$-Poincar\'e
  inequality} if there exists $c_p > 0$ such that
\begin{align*}
 \Vert u\Vert _{L^p(M)} \leq c_p \Vert d u\Vert _{L^p(M;T^*M )} =: c_p
 |u|_{W^{1, p}(M)},
\end{align*}
for all $u \in H^1_D(M)$ (recall Notations~\ref{not.notH-1}). If $(M,
\partialDM)$ satisfies the $L^2$-Poincar\'e inequality, we define
$c_{M, \partialDM}$ to be the least $c_2$ with this property, compare
\eqref{eq.def.cmdm}. Otherwise we set $c_{M, \partialDM} = \infty$.
\end{definition}

We have the following simple lemma.

\begin{lemma}\label{lemma.eq.norms} 
The semi-norm $|\, \cdot \,|_{H^1(M)}$ is equivalent to the $H^1$-norm
on $H^1_{D}(M)$ if, and only if, $(M, \partialDM)$ satisfies the
$L^2$-Poincar\'e inequality.  In particular, this is true if $(M,
\partialDM)$ has finite width.
\end{lemma}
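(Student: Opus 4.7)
The plan is to unwind both sides of the equivalence directly from the definitions. Recall that $\|u\|_{H^1(M)}^2 = \|u\|_{L^2(M)}^2 + |u|_{H^1(M)}^2$, so the inequality $|u|_{H^1(M)} \le \|u\|_{H^1(M)}$ is automatic on all of $H^1(M)$, and only the reverse bound is at issue.

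First I would observe that the existence of a constant $C > 0$ with $\|u\|_{H^1(M)} \le C\,|u|_{H^1(M)}$ for all $u \in H^1_D(M)$ is equivalent, after squaring, to
\begin{equation*}
 \|u\|_{L^2(M)}^2 \;\le\; (C^2 - 1)\, \|du\|_{L^2(M)}^2
\end{equation*}
for all such $u$ (note that $C \ge 1$ necessarily). Setting $c_2 := \sqrt{C^2-1}$ then yields the $L^2$-Poincaré inequality in the form of Definition \ref{def.pPoincare}. Conversely, given the $L^2$-Poincaré inequality with constant $c_2$, taking $C := \sqrt{1 + c_2^2}$ recovers norm equivalence. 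This establishes the biconditional; the minimal constant obtained this way is tied to $c_{M,\partialDM}$ via $C^2 = 1 + c_{M,\partialDM}^2$.

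For the ``in particular'' statement, I would simply invoke Theorem \ref{thm_Poin_intro} in the case $p = 2$: if $(M, \partialDM)$ has finite width, then there exists $c > 0$ with
\begin{equation*}
 \|f\|_{L^2(M)} \;\le\; c\bigl(\|f\|_{L^2(\partialDM)} + \|df\|_{L^2(M)}\bigr)
\end{equation*}
for all $f \in W^{1,2}_{\loc}(M)$. Restricting to $f \in H^1_D(M)$ makes the boundary term vanish (since the trace onto $\partialDM$ is zero by the definition in \eqref{eq.def.H1D} and Theorem \ref{thm.trace}), giving the $L^2$-Poincaré inequality, hence norm equivalence by the first part.

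There is essentially no obstacle here: the argument is a two-line manipulation of squared norms together with a direct citation of Theorem \ref{thm_Poin_intro}. The only minor point to be careful about is justifying that for $f \in H^1_D(M)$ the $L^2$-norm on $\partialDM$ really vanishes, which is immediate from the definition of $H^1_D(M)$ as the kernel of the (Dirichlet part of the) trace map.
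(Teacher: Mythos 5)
Your argument is correct and essentially identical to the paper's: both directions reduce to the same squared-norm manipulation $\|u\|_{H^1}^2 = \|u\|_{L^2}^2 + |u|_{H^1}^2$, with the Poincar\'e constant and the norm-equivalence constant related by $C^2 = 1 + c_{M,\partialDM}^2$. Your explicit handling of the ``in particular'' claim via Theorem~\ref{thm_Poin_intro} at $p=2$ and the vanishing of the trace on $\partialDM$ is a small but welcome addition that the paper leaves implicit.
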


\begin{proof} 
For the simplicity of the notation, we omit below the manifold~$M$
from the notation of the (semi-)norms.  Let us assume that $(M,
\partialDM)$ satisfies the $L^2$-Poincar\'e inequality. By definition
we have $|u|_{H^1} \leq \Vert u\Vert_{H^1}$, so to prove the
equivalence of the norms, it is enough to show that there exists $C >
0$ such that $C |u|_{H^1} \geq\Vert u\Vert _{H^1}$ for all $u \in
H^1_{D}$. Indeed, the $L^2$-Poincar\'{e} inequality gives
 \begin{align}\label{eq.equiv.norm}
  \underbrace{(c_{M,\partialDM}^2+1)}_{C:=} |u|_{H^1}^2 \geq \Vert
  u\Vert _{L^{2}}^2 + |u|_{H^1}^2 =: \Vert u\Vert _{H^1}^2.
 \end{align}
Conversely, if the two norms are equivalent, then we have for $u\in
H^1_D(M)$
 \begin{align*}
  \Vert u\Vert _{L^{2}} \leq \Vert u\Vert _{H^1} \leq C |u|_{H^1}
  \define C\Vert d u\Vert _{L^2}.
 \end{align*}
The proof is complete.
\end{proof}

Clearly, the last lemma holds for functions in $W^{1, p}$, for
  all $p \in [1, \infty]$.
We shall need the Lax--Milgram lemma. Let us recall first
the following well-known definition.

\begin{definition}\label{def.strongly.coercive}
Let $V$ be a Hilbert space and let $P \colon V \to V^{*}$ be a bounded
operator. We say that $P$ is \emph{strongly coercive} if there exists
$\gamma > 0$ such that $| \<Pu, u\>| \geq\gamma \Vert u\Vert _V^2$ for
all $u \in V$. The ``best'' $\gamma$ with this property (i.e. the
largest) will be denoted $\gamma_P$.
\end{definition}

\begin{lemma}[Lax--Milgram lemma]\label{lemma.LaxMilgram}
Let $V$ be a Hilbert spaces and $P \colon V \to V^{*}$ be a strongly
coercive map. Then $P$ is invertible and $\Vert P^{-1}\Vert \leq
\gamma_P^{-1}$.
\end{lemma}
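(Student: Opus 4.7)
The plan is to prove injectivity (with quantitative lower bound) first, then closed range, then surjectivity via a standard orthogonality argument. The whole proof is classical; I just need to be careful with the dual pairing between $V^*$ and $V$.

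First I would extract the fundamental inequality $\gamma_P \|u\|_V \le \|Pu\|_{V^*}$ directly from strong coercivity: for any $u \in V$,
\begin{equation*}
 \gamma_P \|u\|_V^2 \ \le \ |\langle Pu, u\rangle| \ \le \ \|Pu\|_{V^*}\,\|u\|_V,
\end{equation*}
whence the claim after dividing by $\|u\|_V$ (the case $u=0$ being trivial). This inequality simultaneously shows that $P$ is injective and that the image $P(V) \subset V^*$ is closed: if $Pu_n \to \xi$ in $V^*$, then $(u_n)$ is Cauchy by the inequality, so $u_n \to u$ in $V$, and by continuity $\xi = Pu \in P(V)$. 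The same inequality will also give the norm bound $\|P^{-1}\| \le \gamma_P^{-1}$ once surjectivity is established, since $\|P^{-1}\xi\|_V \le \gamma_P^{-1}\|\xi\|_{V^*}$ for every $\xi = Pu$ in the range.

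The main step is surjectivity, i.e.\ showing $P(V) = V^*$. Since $P(V)$ is a closed subspace of the Hilbert space $V^*$, it suffices to show that its orthogonal complement (inside $V^*$, with respect to the Riesz inner product) is zero. Equivalently, since $V$ is reflexive, it suffices to show that any $v \in V$ with $\langle Pu, v \rangle = 0$ for all $u \in V$ must vanish. But specializing to $u = v$ gives $\langle Pv, v \rangle = 0$, which by strong coercivity forces $\gamma_P \|v\|_V^2 \le 0$, hence $v = 0$.

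Combining the three facts, $P \colon V \to V^*$ is a bounded bijection, and the inverse satisfies $\|P^{-1}\| \le \gamma_P^{-1}$ by the first estimate. There is no real obstacle here; the only point that needs a little care is choosing the correct duality convention (the paper works with the complex-conjugate dual, so the pairing $\langle\cdot,\cdot\rangle$ is sesquilinear and the annihilator argument must be phrased through the Riesz isomorphism on $V^*$ rather than through $V^{**}$ directly), but this is merely bookkeeping.
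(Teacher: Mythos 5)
The paper does not prove Lemma~\ref{lemma.LaxMilgram} itself; it cites the textbooks \cite{TrudingerBook}, \cite{Nirenberg55}, and \cite{JostBook}, so there is no in-paper argument to compare against. Your proof is the standard one and it is correct: the coercivity-plus-Cauchy--Schwarz estimate $\gamma_P\|u\|_V \le \|Pu\|_{V^*}$ gives injectivity, closed range, and the bound $\|P^{-1}\| \le \gamma_P^{-1}$; closed range plus the orthogonality argument (transport $\eta \perp P(V)$ through the Riesz isomorphism to a $v\in V$ with $\langle Pu,v\rangle = 0$ for all $u$, then set $u=v$ and invoke coercivity) gives surjectivity. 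One cosmetic point: the appeal to reflexivity is a red herring --- the clean justification is exactly the Riesz identification $V^*\cong V$, which you correctly flag at the end as the relevant bookkeeping; you could drop the reflexivity remark and phrase the annihilator step entirely in terms of the Riesz map from the start.
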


For a proof of the Lax-Milgram lemma, see, for example, \cite[Section
  5.8]{TrudingerBook} or \cite{Nirenberg55, JostBook}.  The following
result explains the relation between the Poincar\'e inequality and the
Laplace operator. Note that $H^1_D(M)\subset L^2(M)\subset
H^1_D(M)^*$.  In particular, $\Delta-\lambda\colon H^1_D(M) \to
H^1_D(M)^*$, for $\lambda\in \CC$, is \emph{defined} by $\< (\Delta
-\lambda) u, v\> = (du, dv)-\lambda (u,v)$, where $\< \, , \, \>$ is
the duality pairing (considered conjugate linear in the second
variable).

Recall the constant $c_{M, \partialDM}$ of
Definition~\ref{def.pPoincare}. In particular, $(1 + c_{M, \pa
  M}^2)^{-1} = 0$ precisely when $(M, \partialDM)$ does {\em not}
satisfy the $L^2$-Poincar\'e inequality.  Also, recall the definition
of the spaces $H^1_D$ from Equation \eqref{eq.def.H1D}.

\begin{proposition}\label{prop.isomorphism} 
Assume that $M$ is a manifold with boundary (no bounded geometry
assumption).  We have that the map $\Delta \colon H^1_{D}(M) \to
H^1_{D}(M)^{*}$ is an isomorphism if, and only if, $(M, \partialDM)$
satisfies the $L^2$-Poincar\'e inequality.  Moreover, if $\Re(\lambda)
< (1 + c_{M, \pa M}^2)^{-1}$, then $\Delta -\lambda\colon H^1_{D}(M)
\to H^1_{D}(M)^{*}$ is strongly coercive and hence an isomorphism.
\end{proposition}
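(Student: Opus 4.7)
The plan is to treat both claims as applications of the Lax--Milgram lemma (Lemma~\ref{lemma.LaxMilgram}), using that $\Delta-\lambda\colon H^1_D(M)\to H^1_D(M)^*$ is by \emph{definition} the bounded operator associated to the sesquilinear form $B_\lambda(u,v)\define (du,dv)_{L^2}-\lambda(u,v)_{L^2}$, and that the key analytic ingredient linking this form to the Hilbert-space structure of $H^1_D(M)$ is precisely Poincar\'e-type control of $\|u\|_{L^2}$ by $\|du\|_{L^2}$, as recorded in Lemma~\ref{lemma.eq.norms}.

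For the equivalence, suppose first that $(M,\partialDM)$ satisfies the $L^2$-Poincar\'e inequality with constant $c\define c_{M,\partialDM}$. Then, by Lemma~\ref{lemma.eq.norms} and inequality~\eqref{eq.equiv.norm}, $|u|_{H^1}^2=\|du\|_{L^2}^2\ge (1+c^2)^{-1}\|u\|_{H^1}^2$, so $\langle\Delta u,u\rangle=\|du\|_{L^2}^2$ shows that $\Delta$ is strongly coercive on $H^1_D(M)$ with $\gamma_\Delta\ge(1+c^2)^{-1}$; Lax--Milgram then gives invertibility. Conversely, if $\Delta\colon H^1_D(M)\to H^1_D(M)^*$ is an isomorphism, then $\|u\|_{H^1}\le C\|\Delta u\|_{H^1_D(M)^*}$ for some $C>0$. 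But
\begin{equation*}
\|\Delta u\|_{H^1_D(M)^*}=\sup_{0\neq v\in H^1_D(M)}\frac{|(du,dv)_{L^2}|}{\|v\|_{H^1}}\le \|du\|_{L^2},
\end{equation*}
by Cauchy--Schwarz. Hence $\|u\|_{L^2}\le\|u\|_{H^1}\le C\|du\|_{L^2}$, which is the $L^2$-Poincar\'e inequality.

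For the strongly coercive claim, write $c\define c_{M,\partialDM}$ and $\lambda_0\define(1+c^2)^{-1}$, and assume $\Re\lambda<\lambda_0$. Since $|\langle(\Delta-\lambda)u,u\rangle|\ge\bigl|\|du\|_{L^2}^2-\Re(\lambda)\|u\|_{L^2}^2\bigr|$, it suffices to produce $\gamma>0$ with
\begin{equation*}
\|du\|_{L^2}^2-\Re(\lambda)\|u\|_{L^2}^2\ \ge\ \gamma\bigl(\|u\|_{L^2}^2+\|du\|_{L^2}^2\bigr)
\end{equation*}
for all $u\in H^1_D(M)$. Using $\|u\|_{L^2}^2\le c^2\|du\|_{L^2}^2$, split $\|du\|_{L^2}^2=t\|du\|_{L^2}^2+(1-t)\|du\|_{L^2}^2$ and bound the second term below by $\tfrac{1-t}{c^2}\|u\|_{L^2}^2$; the resulting inequality holds with any $\gamma\le\min\bigl(t,\tfrac{1-t}{c^2}-\Re\lambda\bigr)$. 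Because $1-c^2\Re\lambda>1-c^2/(1+c^2)=\lambda_0>0$, one can choose $t\in(0,1-c^2\Re\lambda)$ making this minimum positive, giving $\gamma>0$. Then Lax--Milgram yields that $\Delta-\lambda$ is an isomorphism.

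The argument is essentially routine once one has Lemma~\ref{lemma.eq.norms}; the only slightly delicate point is the coercivity estimate for $\lambda$ with $0<\Re\lambda<\lambda_0$, where one must balance the two terms via the parameter $t$ rather than discarding $-\Re(\lambda)\|u\|_{L^2}^2$.
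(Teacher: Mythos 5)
Your proof is correct and uses the same fundamental ingredients (Lax--Milgram plus Lemma~\ref{lemma.eq.norms}), but it differs from the paper's in two places, both to your advantage. For ``isomorphism $\Rightarrow$ Poincar\'e'' the paper argues by contradiction: it assumes Poincar\'e fails, extracts a normalized sequence $u_n$ with $\|du_n\|_{L^2}\to 0$, uses invertibility to find test functions $v_n$, and then applies Cauchy--Schwarz to produce a contradiction. You instead observe directly that $\|\Delta u\|_{H^1_D(M)^*}\le\|du\|_{L^2}$ by Cauchy--Schwarz, so the bounded-inverse estimate $\|u\|_{H^1}\le C\|\Delta u\|_{H^1_D(M)^*}$ immediately yields Poincar\'e. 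This is the same information, but unpacked linearly and therefore cleaner. For the coercivity of $\Delta-\lambda$, the paper claims the single inequality $\Re\langle(\Delta-\lambda)u,u\rangle\ge\bigl((1+c^2)^{-1}-\Re\lambda\bigr)\|u\|_{H^1}^2$, which as stated is only justified when $\Re\lambda\ge 0$ (the step $-\Re(\lambda)\|u\|_{L^2}^2\ge-\Re(\lambda)\|u\|_{H^1}^2$ reverses sign for $\Re\lambda<0$); the constant they actually obtain in that regime is $(1+c^2)^{-1}$, which is still positive, so the conclusion holds. Your $t$-split produces a uniformly valid positive $\gamma$ and avoids that glitch. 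One small imprecision: you should take $t\in\bigl(0,\min(1,\,1-c^2\Re\lambda)\bigr)$, not merely $t\in(0,1-c^2\Re\lambda)$, since for $\Re\lambda<0$ the latter interval extends beyond $1$ and the bound $(1-t)\|du\|^2\ge\tfrac{1-t}{c^2}\|u\|^2$ requires $1-t\ge 0$. This is a one-word fix and does not affect the validity of the argument.
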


\begin{proof} 
Indeed, let us assume that $\Delta \colon H^1_{D}(M) \to H^{1}_{D}(M
)^{*}$ is an isomorphism. If the $L^2$-Poincar\'e inequality was not
satisfied, then there would exist a sequence of functions $u_n \in
H^1_D(M)$ such that $\Vert u_n\Vert _{H^1} = 1$, but $\Vert d u_n\Vert
_{L^2}^2 = \<\Delta u_n, u_n\> \to 0$ as $n \to \infty$, by
Lemma~\ref{lemma.eq.norms}. Since $\Delta$ is an isomorphism, there is
a $\gamma > 0$ such that $\Vert \Delta u\Vert _{(H^1_D)^*} \geq2\gamma
\Vert u\Vert _{H^1}$.  Therefore, using the definition of the (dual)
norm on $H^1_D(M)^*$ we can find a sequence $v_n \in H^1_D(M)$ such
that $\Vert v_n \Vert _{H^1(M)} = 1$ and $\<\Delta u_n , v_n\>
\geq\gamma$. Using the Cauchy-Schwarz inequality for the $L^2$-scalar
product on $1$-forms, we then obtain the following
 \begin{align*}
  \Vert d u_n\Vert _{L^2}^2 = \Vert d u_n\Vert_{L^2}^2 \Vert v_n\Vert
  _{H^1}^2 & \geq \Vert d u_n\Vert_{L^2}^2 \Vert d v_n\Vert_{L^2}^2\\
   &\geq |(d u_n, d v_n)|^2 =|\< \Delta u_n, v_n\>|^2 \geq\gamma^2 >
  0,
 \end{align*}
which contradicts $\Vert d u_n\Vert _{L^2} \to 0$.

Conversely, let $\Re(\lambda) < (1 + c_{M, \pa M}^2)^{-1}$ and $u \in
H^1_D(M)$. Using \eqref{eq.equiv.norm} we have
\begin{multline*} 
 \Re\left( \< (\Delta - \lambda) u, u \>\right) = (du, du) -
 \Re(\lambda) (u, u) \seq |u|_{H^1}^2 - \Re(\lambda) \|u\|_{L^2}\\
 \geq \left ( {(1 + c_{M, \partialDM}^2)^{-1}} - \Re(\lambda) \right )
 \Vert u\Vert _{H^1}^2.
\end{multline*}
The operator $\Delta - \lambda \colon H^1_D(M) \to H^1_D(M)^*$ thus
satisfies the assumptions of the Lax--Milgram lemma, and hence $\Delta
- \lambda$ is an isomorphism for $\Re(\lambda) < (1 + c_{M, \pa
  M}^2)^{-1}$.
\end{proof}

In our setting this directly gives

\begin{theorem}\label{thm.H1new} 
If $(M, \partialDM)$ has finite width, then $(1 + c_{M, \pa M}^2)^{-1}
> 0$, and hence $\Delta \colon H^1_{D}(M) \to H^1_{D}(M)^{*}$ is an
isomorphism.
\end{theorem}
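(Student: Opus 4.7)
The plan is to deduce Theorem \ref{thm.H1new} as an essentially formal combination of two results already proved in the paper: the Poincar\'e inequality (Theorem \ref{thm_Poin_intro}) and the abstract isomorphism criterion of Proposition \ref{prop.isomorphism}. Since $(M, \partialDM)$ has finite width, Theorem \ref{thm_Poin_intro} applies, in particular, for $p = 2$, yielding a finite constant $c > 0$ such that
\begin{equation*}
  \|f\|_{L^2(M)} \leq c\bigl(\|f\|_{L^2(\partialDM)} + \|df\|_{L^2(M)}\bigr)
\end{equation*}
for all $f \in W^{1,2}_{\loc}(M) = H^1_{\loc}(M)$.

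Next, I would specialize this inequality to $f \in H^1_D(M)$. By the very definition \eqref{eq.def.H1D} of $H^1_D(M)$, together with the trace theorem (Theorem \ref{thm.trace}), such an $f$ satisfies $f|_{\partialDM} = 0$ in the sense of traces, so the first term on the right-hand side vanishes and we obtain
\begin{equation*}
  \|f\|_{L^2(M)} \leq c \, \|df\|_{L^2(M)}\qquad \text{for all } f \in H^1_D(M).
\end{equation*}
Per Definition \ref{def.pPoincare}, this is precisely the $L^2$-Poincar\'e inequality for $(M, \partialDM)$, so $c_{M, \partialDM} \leq c < \infty$ and, consequently, $(1 + c_{M, \partialDM}^2)^{-1} > 0$.

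Finally, I would invoke the second assertion of Proposition \ref{prop.isomorphism} with $\lambda = 0$: since $\Re(\lambda) = 0 < (1 + c_{M, \partialDM}^2)^{-1}$, the operator $\Delta = \Delta - 0 \colon H^1_D(M) \to H^1_D(M)^*$ is strongly coercive and, by the Lax--Milgram lemma (Lemma \ref{lemma.LaxMilgram}), an isomorphism. No real obstacle arises here; the content of the theorem is packaged entirely in the two inputs, and the only minor point to note is the use of the trace theorem to convert the a priori pointwise boundary condition in \eqref{eq.def.H1D} into a vanishing $L^2$-trace, thereby eliminating the $\partialDM$ term from the Poincar\'e inequality.
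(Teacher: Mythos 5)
Your proposal is correct and follows exactly the route the paper intends: the paper itself presents Theorem \ref{thm.H1new} with no more explanation than the single phrase that it ``directly'' follows from Proposition \ref{prop.isomorphism} together with the Poincar\'e inequality, which is precisely what you unpack. Your intermediate remark about using the trace theorem to make sense of the vanishing of $f$ on $\partialDM$ is a sensible bit of bookkeeping that the paper leaves implicit.
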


The converse of this result is not true, the following two examples
show that, without the assumption of finite width, $\Delta \colon
H^1_{D}(M) \to H^1_{D}(M)^{*}$ may fail to be an isomorphism.

\begin{example}
Let $M$ be the $n$-dimensional hyperbolic space $\HH^n$, $n\geq 2$,
whose boundary is empty. Hence $(\HH^n,\emptyset)$ does {\em not} have
finite width, but the $L^2$-Poincar\'e inequality holds with
  $$c_{(\HH^n.\emptyset)}=2/(n-1)$$ since $\frac{(n-1)^2}{4}$ is the
infimum of the $L^2$-spectrum of the Laplacian on $\HH^n$,
\cite{Donnelly, Helgason}. Thus $\Delta\colon H^1(\HH^n) \to
H^1_D(\HH^n)^* = H^{-1}(\HH^n)$ is an isomorphism. So the finite width
condition is not necessary for the Laplacian on a manifold with
boundary and bounded geometry to be invertible. See also
  \cite{CarronPedon, Pasquale17, Salah2010} for further results on the
  spectrum of the Laplacian on symmetric spaces.
\end{example}

\begin{example} \label{examp.no}
Let us assume that in Example \ref{ex.Omega} $M_0 = \RR^{m-1}$ with
the standard euclidean metric. As in that example, $f > g$ are smooth
functions such that $df$ and $dg$ are totally bounded (i.e.  bounded
and with all their covariant derivatives bounded as well). Assume that
the functions $f$ and $g$ satisfy also that $\lim_{|x|\to \infty} f(x)
- g(x) = \infty$.  Then, using the notation of Example \ref{ex.Omega},
we have that $M\define\Omega(f, g)$ is a manifold with boundary and
bounded geometry. However, $(M, \pa M)$ does not have finite
width. Moreover, it does not satisfy the Poincar\'e inequality. This
can easily be seen as follows: Since $0$ is in the essential spectrum
of $\Delta$ on $\mathbb R^n$, there is sequence $v_i\in
C_c^\infty(B_i(0))$ with $\Vert v_i\Vert_{L^2}=1$ and $\Vert
dv_i\Vert_{L^2}\to 0$.  By translation we can assume that the support
of $v_i$ is in $\Omega(f,g)$. Thus, the Poincar\'e inequality is
violated and by Proposition~\ref{prop.isomorphism}, the operator
$\Delta$ (with any kind of mixed boundary conditions) is not
invertible.
\end{example}

\subsection{Higher regularity}\label{ssec.4.2}
We consider in this subsection the regularity and invertibility of the
Laplacian in the scale of Sobolev spaces determined by the metric,
which is the main question considered in this paper (see the
Introduction). We assume that $(M, \partialDM)$ is a manifold with
boundary and bounded geometry, however, for regularity questions {\em
  we do not require $(M, \partialDM)$ to have finite width}.

We shall use the notation introduced in Equations \eqref{eq.FC-chart}
and \eqref{eq.Ugamma} and in Definition~\ref{def.FC}. We set
$P_x\define \kappa_x^* \circ \Delta \circ (\kappa_x)_*$, where
$\kappa_x$ is viewed as a diffeomorphism on its image. Thus $P_x$ is a
differential operator on the Euclidean ball, respectively cylinder,
corresponding to the geodesic normal coordinates, respectively Fermi
coordinates, on $W_x\define W_x(r)$, see \eqref{eq.Ugamma}.  Thus
$\<P_x u, v\> = \int_{W_x} (du, dv) \dvol_g$.  We endow the space of
these differential operators with the norm defined by the maximum of
the $W^{2,\infty}$-norms of the coefficients.

\begin{lemma} \label{lemma.bd.fam}
Let $r < r_{FC}$. Then the set $\{P_x \vert \ \dist(x, \pa M) \geq r
\}$ is a relatively compact subset of the set of differential
operators on the ball $B_r^m(0) \subset \RR^{m}$. Similarly, the set
$\{P_y\vert \ y \in \pa M \}$ is a relatively compact subset of the
set of second order differential operators on $b(r)\define
B^{m-1}_r(0)\times [0,r)\subset \mathbb R^{m}$.
\end{lemma}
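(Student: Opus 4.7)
The plan is to combine the explicit coordinate expression of $\Delta$ with the uniform control of metric coefficients in Fermi and geodesic normal coordinates afforded by bounded geometry, and then to invoke Arzelà--Ascoli.

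First, in any local chart the Laplace--Beltrami operator has the form
\[
 \Delta u \seq -\frac{1}{\sqrt{|g|}}\,\partial_i\bigl(\sqrt{|g|}\,g^{ij}\partial_j u\bigr) \seq - g^{ij}\partial_i\partial_j u + b^k\partial_k u,
\]
so the pull-back $P_x = \kappa_x^*\Delta(\kappa_x)_*$ is the same expression with $g$ replaced by $\kappa_x^*g$. In particular, the coefficients of $P_x$ as a second-order differential operator (the principal symbol $g^{ij}\circ\kappa_x$ and the first-order drift $b^k$) are universal smooth expressions in the metric components $g_{\alpha\beta}\circ\kappa_x$ and their first derivatives.

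Second, by Remark~\ref{rem_FC_1} (relying on Theorem~4.9 of \cite{Grosse.Schneider.2013}), for $r<r_{FC}$ the components $g_{\alpha\beta}\circ\kappa_x$ and \emph{all} their partial derivatives are uniformly bounded on $B^m_r(0)$, respectively on $b(r)$, with bounds that do not depend on the base point $x$; moreover the matrix $(g_{\alpha\beta}\circ\kappa_x)$ is uniformly elliptic in $x$. Therefore $g^{ij}\circ\kappa_x$, $\sqrt{|g|\circ\kappa_x}$, and all their derivatives are uniformly bounded, and consequently the coefficients of $P_x$ are uniformly bounded in $C^k$ for every $k\ge 0$, with bounds independent of $x$.

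Third, the relative compactness then follows from the Arzelà--Ascoli theorem: a uniform bound on the coefficients in $C^3$ of a precompact set gives a uniform $W^{2,\infty}$-bound together with equicontinuity of the second derivatives (the $C^3$-bound supplies a uniform Lipschitz constant on second derivatives), and this is precisely what is needed for precompactness in $C^2 = W^{2,\infty}$. Extracting a convergent subsequence coefficient by coefficient yields a limit operator $P_\infty$ and a subsequence $P_{x_n} \to P_\infty$ in the $W^{2,\infty}$-coefficient norm.

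The only mild obstacle is that the half-cylinder $b(r) = B^{m-1}_r(0)\times[0,r)$ is not closed at $t=r$. This is harmless: since $r<r_{FC}$, the Fermi chart $\kappa_x$ is in fact defined on a slightly larger half-cylinder, and the uniform derivative bounds of Remark~\ref{rem_FC_1} remain valid there, so Arzelà--Ascoli may be applied on $\overline{b(r)}$. Alternatively one exhausts $b(r)$ by compact sets $\overline{B^{m-1}_{r'}(0)}\times[0,r']$ with $r'\nearrow r$ and uses a standard diagonal extraction.
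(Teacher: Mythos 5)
Your proof is correct and takes essentially the same route as the paper: uniform $C^\infty$-bounds on the coefficients of $P_x$ (coming, via the explicit coordinate expression of $\Delta$, from the uniform bounds on the metric in geodesic/Fermi coordinates supplied by Remark~\ref{rem_FC_1}), plus Arzel\`a--Ascoli, with the observation that one must shrink $r$ slightly to work on a compact set. You have simply filled in the details that the paper leaves implicit.
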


\begin{proof}
The coefficients of the operators $P_x$ and all their derivatives are
uniformly bounded, by assumption. By decreasing $r$, if necessary, we
get that they are bounded on a compact set.  The Arzela-Ascoli theorem
then yields the result.
\end{proof}

We get the following result 

\begin{lemma} \label{lemma.bd.fam2}
For any $k \in \NN$ (so $k \geq1$), there exists $C_k>0$ such that
\begin{align*}
 \Vert w \Vert _{H^{k+1}(M)} \leq C_k \big( \Vert \Delta w \Vert
 _{H^{k-1}(M)} + \Vert w \Vert _{H^{1}(M)} + \Vert w \Vert
 _{H^{k+1/2}(\pa M)} \big),
\end{align*}
for any $w \in H^{1}(W_\gamma)$ with compact support, where $W_\gamma$
is a coordinate patch of Definition~\ref{def.FC}.
\end{lemma}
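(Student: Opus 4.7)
My plan is to reduce the estimate to classical elliptic regularity on a fixed Euclidean model domain by pulling back via the coordinate chart $\kappa_\gamma$, and then use the relative compactness established in Lemma \ref{lemma.bd.fam} to pass from a family of pointwise constants to a single uniform constant $C_k$.

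First I would split into the two cases according to whether the chart meets the boundary. If $\dist(p_\gamma, \pa M) \geq r$, then $\kappa_\gamma\colon B_r^m(0) \to W_\gamma$ is a normal geodesic chart, and setting $u \define w \circ \kappa_\gamma$ we obtain an element of $H^1(B_r^m(0))$ with compact support. The trace term $\|w\|_{H^{k+1/2}(\pa M)}$ is zero since $W_\gamma$ does not meet $\pa M$, and classical interior regularity for the elliptic operator $P_\gamma$ on $B_r^m(0)$ yields
\begin{equation*}
 \|u\|_{H^{k+1}(B_r^m)} \leq C(P_\gamma)\bigl(\|P_\gamma u\|_{H^{k-1}(B_r^m)} + \|u\|_{H^1(B_r^m)}\bigr).
\end{equation*}
If $p_\gamma \in \pa M$, the chart is Fermi, $\kappa_\gamma\colon b(r) \to W_\gamma$, and $u$ has compact support in $b(r)$ but does not vanish on $\{t=0\}$. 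Classical boundary regularity for second order elliptic operators (treating $u|_{t=0}$ as inhomogeneous Dirichlet data) gives
\begin{equation*}
 \|u\|_{H^{k+1}(b(r))} \leq C(P_\gamma)\bigl(\|P_\gamma u\|_{H^{k-1}(b(r))} + \|u|_{t=0}\|_{H^{k+1/2}(B_r^{m-1}(0))} + \|u\|_{H^1(b(r))}\bigr).
\end{equation*}

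Next, I would invoke Lemma \ref{lemma.bd.fam} to obtain a uniform constant. The classical constants $C(P_\gamma)$ in each of the two inequalities above depend continuously on the coefficients of $P_\gamma$ measured in $W^{k,\infty}$-norm (for the interior case) or jointly in the $W^{k,\infty}$-norm together with a lower ellipticity bound (for the boundary case); the bounded geometry hypothesis gives uniform ellipticity of $P_\gamma$ and Lemma \ref{lemma.bd.fam} gives relative compactness of $\{P_\gamma\}$ in these topologies, so $\sup_\gamma C(P_\gamma) < \infty$.

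Finally, I would transfer Sobolev norms between $u$ and $w$, and between $u|_{t=0}$ and $w|_{\pa M}$. The Jacobian, metric coefficients, and all their derivatives are uniformly bounded in the Fermi/geodesic charts by Remark \ref{rem_FC_1}, so these comparisons cost only a $\gamma$-independent multiplicative constant. Combining the three steps yields the desired inequality with a single constant $C_k$ valid for every $\gamma$.

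The main obstacle I expect is justifying that the classical regularity constant really does depend only on quantities controlled by $W^{k,\infty}$-bounds on the coefficients and the ellipticity constant. For $k=1$ this is the standard $H^2$-estimate of Agmon--Douglis--Nirenberg type and is transparent; for general $k \geq 1$ one must iterate and keep careful track of the norms in which coefficients enter (the iteration requires $C^{k-1,1}$ control on coefficients, which we have a fortiori from bounded geometry). Once this dependence is articulated, the uniformity via Lemma \ref{lemma.bd.fam} is immediate.
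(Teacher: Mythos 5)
Your outline matches the paper's --- chart-by-chart elliptic regularity, then uniformity via the relative compactness of Lemma~\ref{lemma.bd.fam}, then transfer of norms via Remark~\ref{rem_FC_1} --- but the step you single out as ``the main obstacle'' is exactly where the proof needs a concrete argument, and it is not closed by asserting that the constant ``depends continuously'' on the coefficients. The best constant $C(P)$ in an a~priori estimate is a supremum of continuous functionals of $P$, so a priori it is only \emph{lower} semi-continuous in $P$; lower semi-continuity does not give a finite supremum on a compact set. What you actually need is local boundedness from above, which requires a perturbation/absorption argument: if $\|P - P_0\|_{W^{k-1,\infty}}$ is small enough, then $\|(P - P_0)u\|_{H^{k-1}} \leq \epsilon\|u\|_{H^{k+1}}$, hence $\|u\|_{H^{k+1}} \leq C(P_0)\bigl(\|Pu\|_{H^{k-1}} + \epsilon\|u\|_{H^{k+1}} + \cdots\bigr)$, and absorbing the $\epsilon$-term for $\epsilon < (2 C(P_0))^{-1}$ gives $C(P) \leq 2C(P_0)$. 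The paper supplies exactly this ingredient, but packaged as a contradiction argument: it extracts a subsequence with $P_j \to P_\infty$ in $C^\infty$ using Lemma~\ref{lemma.bd.fam}, notes that the limit $P_\infty$ is still strongly elliptic and hence satisfies its own a~priori estimate, and then applies the absorption estimate $\|P_j u\|_{H^{k-1}} \geq \|P_\infty u\|_{H^{k-1}} - \epsilon_j\|u\|_{H^{k+1}}$ (the paper's \eqref{eq.epsilon}) to reach a contradiction. Filling your sketch with that explicit absorption step makes it complete and essentially equivalent to the paper's; the contradiction phrasing is an organizational choice, not a different mechanism.
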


Any undefined term on the right hand side of the inequality in Lemma
\ref{lemma.bd.fam2} is set to be $\infty$ (for instance $\Vert w \Vert
_{H^{k+1/2}(\pa M)} = \infty$ if $w \notin H^{k+1/2}(\pa M)$), in
which case the stated inequality is trivially satisfied.  Note also
that the condition that $w$ have compact support in $W_\gamma$ does
{\em not} mean that $w$ vanishes on $W_\gamma \cap \pa M$. (This
comment is relevant, of course, only if $W_\gamma \cap \pa M \neq
\emptyset$.)

\begin{proof}
Elliptic regularity for strongly elliptic equations (see, for
instance, Theorem~8.13 in \cite{TrudingerBook}, Theorem~9.3.3 in
\cite{JostBook}, or Proposition~11.10 in \cite{Taylor1}) gives, for
every $\gamma\in I$ as in Definition~\ref{def.FC}, that there exists
$C_\gamma$ such that
\begin{align*}
 \Vert w \Vert _{H^{k+1}(M)} \leq C_\gamma \big( \Vert \Delta w\Vert
 _{H^{k-1}(M)} + \Vert w \Vert _{H^{1}(M)} + \Vert w \Vert
 _{H^{k+1/2}(\pa M)} \big)
\end{align*}
for any $w \in H^{1}(W_\gamma)$ with compact support.  Of course, if
$W_\gamma$ is an interior set (that is, it does not intersect the
boundary), then $\Vert w \Vert _{H^{k+1/2}(\pa M)} = 0$. This should
be understood in the sense that if the right hand side is finite, then
$\Vert w \Vert _{H^{k+1}(M)} < \infty$, and hence $w \in
H^{k+1}(M)$. We need to show that we can choose $C_\gamma$
independently of $\gamma$. Let us assume the contrary. Then, for a
suitable subsequence $p_j$, we have that there exist $w_j \in
H^{1}(W_j)$ with compact support such that
\begin{align*}
 \Vert w_j \Vert _{H^{k+1}(M)} > 2^{j} \big( \Vert \Delta w_j \Vert
 _{H^{k-1}(M)} + \Vert w_j \Vert _{H^{1}(M)} + \Vert w_j
 \Vert_{H^{k+1/2}(\pa M)} \big).
\end{align*}
We can assume that the points $p_{j}$ are either all at distance at
least $r$ to the boundary, or that they are all on the boundary.  Let
us assume that they are all on the boundary. The other case is very
similar (even simpler). Using Fermi coordinates $\kappa_j\colon b(r)
:= B_r^{m-1}(0) \times [0, r) \to W_j$ we can move to a fixed
  cylinder, but with the price of replacing $\Delta$ with $P_j \define
  P_{p_j}$, a variable coefficient differential operator on
    $b(r)$.  By Lemma~\ref{lemma.bd.fam}, after passing to another
  subsequence, we can assume that the coefficients of the
  corresponding operators $P_j$ converge in the $\maC^{\infty}$
  topology on $b(r)$ to the coefficients of a fixed operator
  $P_\infty$.  In particular, we can assume that
\begin{align}\label{eq.epsilon}
 \Vert P_j u\Vert _{H^{k-1}(b(r))} \geq \Vert P_\infty u\Vert
 _{H^{k-1}(b(r))} - \epsilon_j \Vert u\Vert _{H^{k+1}(b(r))},
\end{align}
for all $u \in H^{k+1}(b(r))$, where $\epsilon_j \to 0$ as $j \to
\infty$ independent of $w$.  

The main point of this proof (exploited in greater generality in
\cite{GN17}) is that $P_\infty$ also satisfies elliptic
regularity. This is because it is strongly elliptic, since this
property is preserved by limits in which the strong ellipticity
constant is bounded from below {\em and} strongly elliptic operators
with Dirichlet boundary conditions satisfy elliptic regularity (see
the references above).  This means that there exists $C > 0$ such that
\begin{equation*}
 C \big ( \Vert P_\infty u\Vert _{H^{k-1}(b(r))} + \Vert u \Vert
 _{H^{1}(b(r))} + \Vert u \Vert _{H^{k+1/2}(B_r^{m-1}(0))} \big ) \geq
 \Vert u \Vert _{H^{k+1}(b(r))}
\end{equation*}
for all $u \in H^{k+1}(b(r))$ with compact support in $b(r) :=
B_r^{m-1}(0) \times [0, r)$.
 
By Proposition~\ref{prop.part.unit} we also have

\begin{align}\label{eq.equiv.norms}
 \frac1{1+C_0} \, \leq \, \frac{\Vert w\circ \kappa_j^{-1}\Vert
   _{H^{\ell}(W_j)}}{\Vert w\Vert _{H^{\ell}(b(r))}} \, \leq \,
 1+C_0\,,
\end{align}
for some $C_0 > 0$, for all $w \in H^{k+1}(b(r))$, and for $\ell \leq
k+1$, by the bounded geometry of $M$.

Equations \eqref{eq.epsilon} and \eqref{eq.equiv.norms} then give,
first in the boundary-less case
\begin{align*}
  \Vert w_j\circ \kappa_j \Vert _{H^{k+1}(b(r))} & \leq C \big (\,
  \Vert P_\infty (w_j\circ \kappa_j)\Vert _{H^{k-1}(b(r))} + \Vert
  w_j\circ \kappa_j \Vert _{H^{1}(b(r))} \, \big ) \\
 & \leq C \big (\, \Vert P_j(w_j\circ \kappa_j)\Vert _{H^{k-1}(b(r))}
  + \Vert w_j\circ \kappa_j \Vert _{H^{1}(b(r))}\\
 & \phantom{\leq} + \epsilon_j \Vert w_j\circ \kappa_j \Vert
  _{H^{k+1}(b(r))}\, \big ) \\
 & \leq C (1+C_0) \big( \Vert \Delta w_j\Vert_{H^{k-1}(M)} + \Vert w_j
  \Vert _{H^{1}(M)}\, \big )\\
 & \phantom{\leq} + C \epsilon_j \Vert w_j\circ \kappa_j
  \Vert_{H^{k+1}(b(r))} \\
 & \leq 2^{-j}C (1+C_0) \Vert w_j \Vert _{H^{k+1}(M)} + C \epsilon_j
  \Vert w_j\circ \kappa_j \Vert _{H^{k+1}(b(r))} \\
 & \leq C \big ( (1+C_0)^2 2^{-j} + \epsilon_j \big ) \Vert w_j\circ
  \kappa_j \Vert _{H^{k+1}(b(r))}
\end{align*}
and this is a contradiction for large $j$, since $\Vert w_j\circ
\kappa_j \Vert _{H^{k+1}(b(r))} \neq 0$. For the case when $b(r)$
has a non-empty boundary, we just include the boundary terms in
the right hand side.
\end{proof}

We finally have the following result.

\begin{theorem}\label{thm.regularity}
Let $M$ be a manifold with boundary and bounded geometry (not
necessarily with finite width). Let $k \in \NN$.  Then there exists $c
> 0$ (depending on $k$ and $(M, \partialDM)$) such that
 \begin{align*}
   \Vert u \Vert _{H^{k+1}(M)} \leq c \big(\, \Vert \Delta u\Vert
   _{H^{k-1}(M)} + \Vert u \Vert _{H^{1}(M)} + \Vert u \Vert
   _{H^{k+1/2}(\pa M)} \, \big),
  \end{align*}
for any $u \in H^{1}(M)$.
\end{theorem}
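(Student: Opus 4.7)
The plan is to globalize the local elliptic estimate of Lemma \ref{lemma.bd.fam2} by means of a uniform partition of unity adapted to the bounded geometry of $(M,\pa M)$. Choose an $r$-covering set $\{p_\gamma\}_{\gamma \in I}$ with $r < r_{FC}/4$ and an associated $r$-uniform partition of unity $\{\phi_\gamma\}$, as provided by Definitions \ref{def.FC}, \ref{def.runif} and the subsequent remark. By Proposition \ref{prop.part.unit} (applied to $k+1$ in place of $k$), the norm $\|u\|_{H^{k+1}(M)}^2$ is equivalent to $\sum_\gamma \|(\phi_\gamma u)\circ \kappa_\gamma\|_{H^{k+1}}^2$, and the analogous statement holds for $H^{k-1}(M)$, $H^1(M)$, and, via the trace theorem (Theorem~\ref{thm.trace}) together with a uniform partition of unity on $\pa M$, for $H^{k+1/2}(\pa M)$.

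First I would apply Lemma \ref{lemma.bd.fam2}, with its uniform constant $C_k$, to each $w = \phi_\gamma u$, which has compact support in $W_\gamma$. This yields
\begin{equation*}
 \|\phi_\gamma u\|_{H^{k+1}(M)}^2 \leq C_k^2 \bigl( \|\Delta(\phi_\gamma u)\|_{H^{k-1}(M)}^2 + \|\phi_\gamma u\|_{H^1(M)}^2 + \|\phi_\gamma u\|_{H^{k+1/2}(\pa M)}^2\bigr).
\end{equation*}
Next, write the commutator $\Delta(\phi_\gamma u) = \phi_\gamma\, \Delta u + [\Delta,\phi_\gamma]u$, where $[\Delta,\phi_\gamma]$ is a first-order differential operator whose coefficients involve $d\phi_\gamma$ and $\Delta\phi_\gamma$; by Definition \ref{def.runif}(ii) and the bounded geometry of $M$, these coefficients are uniformly bounded together with all their derivatives. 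Summing over $\gamma$ and using the uniform local finiteness of the covering yields
\begin{equation*}
 \|u\|_{H^{k+1}(M)}^2 \leq C \bigl( \|\Delta u\|_{H^{k-1}(M)}^2 + \|u\|_{H^1(M)}^2 + \|u\|_{H^k(M)}^2 + \|u\|_{H^{k+1/2}(\pa M)}^2\bigr),
\end{equation*}
with $C$ independent of $u$.

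It remains to absorb the intermediate $\|u\|_{H^k(M)}$. On a manifold with bounded geometry, partition-of-unity equivalence (Proposition \ref{prop.part.unit}) reduces a standard interpolation inequality on the model half-space to the global estimate $\|u\|_{H^k(M)} \leq \epsilon \|u\|_{H^{k+1}(M)} + C_\epsilon \|u\|_{H^1(M)}$, valid for every $\epsilon>0$ and $k \geq 1$. Choosing $\epsilon$ small enough to absorb the $\|u\|_{H^k}$ term into the left-hand side gives the desired inequality. (Alternatively, one may argue by induction on $k$, using the case $k=1$, which contains no intermediate term, as the base and then applying the previous display to iteratively reduce the order.) The main technical nuisance is the uniform control of the commutator and of the partition-of-unity norm equivalences; once these are in place, as guaranteed by the bounded geometry assumption, the argument is a routine globalization of the classical elliptic regularity estimate.
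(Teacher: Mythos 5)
Your proof is correct and takes essentially the same route as the paper: localize with a uniform partition of unity, apply Lemma \ref{lemma.bd.fam2} to each piece $\phi_\gamma u$, and control the commutator $[\Delta,\phi_\gamma]$ using the uniform bounds on the derivatives of $\phi_\gamma$ coming from bounded geometry. The one point you make explicit that the paper elides under the phrase \lq\lq lower order norms\rq\rq\ is the absorption of the intermediate term $\Vert u\Vert_{H^k(M)}$ by interpolation (or, equivalently, by induction on $k$), which is indeed needed and is a worthwhile clarification.
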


\begin{proof}
 The proof is the same as in the case of {\em compact} manifolds with
 boundary \cite{LionsMagenes1, Taylor1}, but carefully keeping track
 of the norms of the commutators. Let us review the main points,
 stressing the additional reasoning that is used in the bounded
 geometry (non-compact) case.  More to the point, we first use the
 definition of Sobolev spaces using partitions of unity
 $(\phi_\gamma)$, Proposition \ref{prop.part.unit}, to reduce our
 estimate to uniform estimates of the norms of the terms $\phi_\gamma
 \Delta u$. Since the derivatives of the partition of unity functions
 $\phi_\gamma$ are bounded, the terms $\phi_\gamma \Delta u$ can be
 uniformly estimated using estimates of $\Delta (\phi_\gamma u)$ and
 lower order norms. Lemma \ref{lemma.bd.fam2}, then gives the result.
\end{proof}

The meaning of Theorem \ref{thm.regularity} is also that if $\Delta
u \in H^{k-1}(M)$, $u \in H^{1}(M)$, and $u \in H^{k+1/2}(\pa M)$,
then, in fact, $u \in H^{k+1}(M)$.  This result shows that the domain
of $\Delta^{k}$ is contained in $H^{2k}$. 

See \cite{ADN1, Agranovich97, Browder60, Taylor1} and the references
therein for the analogous classical results for smooth, bounded
domains.  In \cite{Browder60}, more general regularity results were
obtained when $M$ is contained in a flat space. Our method of proof
is, however, different. No general results similar to the results on
the invertibility of the Laplacian (stated below) appeared in these or
other papers, though. Let us recall the following well-known
self-adjointness criteria.

\begin{remark}\label{rem.s-a}
Recall from an unnumbered corollary in \cite{SimonReed2}, Section X.1,
that every {\em closed}, symmetric (unbounded) operator $T$ that has
at least one real value in its resolvent set $\rho(T) \define \CC
\smallsetminus \sigma(T)$ is self-adjoint. Every operator $T_1$ that
has a non-empty resolvent set is closed, as one can see as a trivial
exercise. One obtains that if $T$ is symmetric and $\rho(T) \cap \RR
\neq \emptyset$, then $T$ is self-adjoint. We note that the above
mentioned corollary in \cite{SimonReed2}, Section X.1, does not state
explicitly that $T$ is required to have dense domain, but this is a
running assumption in the quoted book (see the first paragraph of
Section VIII.1 in \cite{SimonReed1}). However, it is again a trivial
exercise to show that if $T_1$ is symmetric and $\rho(T_1) \cap \RR
\neq \emptyset$, then $T_1$ is densely defined.
\end{remark}

\begin{theorem}\label{thm.Hm} 
Assume that $M$ is a manifold with bounded geometry and finite width.
Let
\begin{align*}
  \tilde{\Delta}_k-\lambda \colon H^{k+1}(M) \to H^{k-1}(M) \oplus
  H^{k+1/2}(\pa M)
\end{align*}
be given by $(\tilde{\Delta}_k - \lambda ) u \define \big( \Delta u -
\lambda u, u\vert_{\pa M} \big )$, $k \in \NN$. Then $\tilde{\Delta}_k
- \lambda$ is an isomorphism for $\Re(\lambda) < (1 + c_{M,
  \partialDM}^2)^{-1}$.  In particular, $\Delta$ is self-adjoint on
$L^2(M)$ with domain $H^2(M) \cap H^1_0(M)$.
\end{theorem}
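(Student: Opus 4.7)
The plan is to combine three previously established facts: the $H^1$-isomorphism from Proposition~\ref{prop.isomorphism} (available because finite width gives $c_{M,\partial M}<\infty$ by Theorem~\ref{thm.H1new}), the higher-regularity estimate from Theorem~\ref{thm.regularity}, and the trace theorem (Theorem~\ref{thm.trace}). Once $\tilde{\Delta}_k-\lambda$ is shown to be a bounded bijection for real $\lambda$ in the allowed range, self-adjointness of $\Delta$ with domain $H^2(M)\cap H^1_0(M)$ will follow via the self-adjointness criterion of Remark~\ref{rem.s-a}.

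First I would verify that $\tilde{\Delta}_k-\lambda$ is well-defined and bounded: $\Delta-\lambda\colon H^{k+1}(M)\to H^{k-1}(M)$ is clearly continuous, and the trace map $H^{k+1}(M)\to H^{k+1/2}(\partial M)$ is continuous by Theorem~\ref{thm.trace}. For injectivity, suppose $\tilde{\Delta}_k u-\lambda u=(0,0)$. Then $u\in H^1(M)$ has vanishing trace, so $u\in H^1_D(M)$, and $(\Delta-\lambda)u=0$ in $L^2(M)\hookrightarrow H^1_D(M)^*$; Proposition~\ref{prop.isomorphism} then forces $u=0$.

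For surjectivity, given $(f,g)\in H^{k-1}(M)\oplus H^{k+1/2}(\partial M)$, I use the surjectivity of the trace from Theorem~\ref{thm.trace} to pick $u_0\in H^{k+1}(M)$ with $u_0|_{\partial M}=g$. This reduces the problem to finding $v\in H^{k+1}(M)\cap H^1_D(M)$ with $(\Delta-\lambda)v=f-(\Delta-\lambda)u_0=:\tilde f\in H^{k-1}(M)\subset H^1_D(M)^*$. Proposition~\ref{prop.isomorphism} produces a unique $v\in H^1_D(M)$ solving this equation; then I would bootstrap the regularity by applying Theorem~\ref{thm.regularity} iteratively: at each step, knowing $v\in H^j(M)$, $\Delta v=\tilde f+\lambda v\in H^{\min(k-1,j)}(M)$, and $v|_{\partial M}=0\in H^{k+1/2}(\partial M)$, the estimate promotes $v$ to $H^{j+1}(M)$, until $v\in H^{k+1}(M)$ after finitely many steps. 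The bounded inverse theorem then gives continuity of the inverse.

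For the final assertion, specializing to $k=1$ shows that for real $\lambda<(1+c_{M,\partial M}^2)^{-1}$, the restriction of $\tilde{\Delta}_1-\lambda$ to $H^2(M)\cap H^1_0(M)$ is a bijection onto $L^2(M)\oplus\{0\}\cong L^2(M)$; hence $\lambda$ lies in the resolvent set of $\Delta$ acting on $L^2(M)$ with this domain. Symmetry on the dense domain $H^2(M)\cap H^1_0(M)$ follows from Green's formula, whose vanishing boundary terms are legitimate because of the bounded-geometry trace theory and the vanishing traces. Remark~\ref{rem.s-a} then concludes self-adjointness. I expect the main technical care to concern the bootstrap regularity step, in particular verifying at each stage that the right-hand side of the Theorem~\ref{thm.regularity} estimate is finite (so that the estimate genuinely promotes regularity rather than being vacuous), which is where the zero Dirichlet trace of $v$ plays its decisive role.
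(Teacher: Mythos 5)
Your proposal is correct and takes essentially the same route as the paper: lift the boundary datum by the trace theorem, apply the $H^1_D$-isomorphism from Proposition~\ref{prop.isomorphism}/Theorem~\ref{thm.H1new}, bootstrap with Theorem~\ref{thm.regularity}, then deduce self-adjointness from symmetry and the existence of a real point in the resolvent set via Remark~\ref{rem.s-a}. You have simply filled in the details that the paper's one-paragraph proof leaves implicit, and your attention to the convention that makes the regularity estimate non-vacuous (the zero Dirichlet trace of $v$) is exactly the right point to stress.
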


Recall that $c_{M, \partialDM} := \infty$ if $(M, \partialDM)$ does
not satisfy the $L^2$-Poincar\'e inequality, see \eqref{eq.def.cmdm}
and Definition~\ref{def.pPoincare}.

\begin{proof}
The first part of the result follows from Theorem
\ref{thm.regularity}, the trace theorem (Theorem~\ref{thm.trace}), and
the isomorphism of Theorem \ref{thm.H1new}. To prove that $\Delta$ is
self-adjoint, we notice that, $\Delta - \lambda$ is invertible for
$\lambda\in \RR$, $\lambda<0$. Since $\Delta$ is symmetric, the result
follows from standard characterizations of self-adjoint operators, see
Remark~\ref{rem.s-a}.
\end{proof}


\subsection{Mixed boundary conditions}\label{ssec.4.3}
The case of mixed boundary conditions (thus including the case of pure
Neumann boundary conditions, i.e. $\pa M = \partialNM$) is very
similar. The main technical difference arises due to the fact that
$(H^1_D(M))^* \not\simeq H^{-1}(M)$ if $\partialNM \neq \emptyset$. To
deal with mixed boundary conditions (Dirichlet on $\partialDM$,
Neumann on $\partialNM$), one has to replace the boundary terms in $u$
with $\pa_\nu u$. Of course, the question arises how to define
$\pa_\nu u$ if $u \in H^1(M)$. This is done in a weak sense, noticing
that one has an inclusion $j_k \colon H^{k-1}(M) \oplus H^{k-1/2}(M)
\to H^1_D(M)^*$ by $(f, g)(w) = \int_M fw d\vol_g + \int_{\partialNM}
g w d\vol_{\pa g}$. If $u \in H^1_D(M)$ and $\Delta u = j_k(f, g)$,
then we say that $f$ is the ``classical'' $\Delta u$ and that
$g = \pa_\nu u$. These elementary but tedious details pertain more to
analysis than geometry, so we skip the simple proofs (the reader can
see more details in a more general setting in \cite{GN17, AGN3}),
which however follow a different line than the one indicated here. A
more substantial remark is that strongly elliptic operators with
Neumann boundary conditions still satisfy elliptic regularity, so the
proof of Lemma \ref{lemma.bd.fam2} extends to the case of Neumann
boundary conditions.

\begin{theorem}\label{thm.regularity2} Let $M$ be a manifold with boundary 
and bounded geometry (not necessarily with finite width). Let $k \in
\NN$.  Then there exists $c > 0$ (depending on $k$ and $(M,
\partialDM)$) such that
 \begin{align*}
   \Vert u \Vert _{H^{k+1}(M)} \leq c \big( \Vert \Delta u\Vert
   _{H^{k-1}(M)} + \Vert u \Vert _{H^{1}(M)} + \|u
   \|_{H^{k+1/2}(\partialDM)} + \|\pa_\nu u \|_{H^{k-1/2}(\partialNM)}
   \big),
  \end{align*}
for any $w \in H^{1}(M)$. If $\Delta u$ is in the image of $j_k$, then
$\pa_\nu u$ is defined by duality, as explained above. Otherwise, we
set $\|\pa_\nu w \|_{H^{k-1/2}(\partialNM)} = \infty$.
\end{theorem}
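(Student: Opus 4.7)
The plan is to adapt the proof of Theorem~\ref{thm.regularity} by upgrading the local elliptic estimate from Lemma~\ref{lemma.bd.fam2} to an analogous one incorporating Neumann boundary data, and then to re-assemble globally with a uniform partition of unity. First, observe that because $\partialDM$ and $\partialNM$ are disjoint open-and-closed subsets of $\pa M$, any Fermi chart $W_\gamma$ of radius $r < r_{FC}$ meets $\pa M$ inside a single connected component, and hence inside exactly one of $\partialDM$ or $\partialNM$. Consequently, an $r$-covering set in the sense of Definition~\ref{def.FC} splits the charts into three disjoint types: interior, Dirichlet, and Neumann.

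Next, I would establish the Neumann analog of Lemma~\ref{lemma.bd.fam2}: there exists $C_k > 0$ such that, for every Neumann chart $W_\gamma$ and every $w \in H^1(W_\gamma)$ with compact support in $W_\gamma$,
\[
 \Vert w\Vert_{H^{k+1}(M)} \leq C_k \bigl(\Vert \Delta w\Vert_{H^{k-1}(M)} + \Vert w\Vert_{H^1(M)} + \Vert \pa_\nu w\Vert_{H^{k-1/2}(\partialNM)}\bigr).
\]
The argument copies that of Lemma~\ref{lemma.bd.fam2}: assume no uniform constant exists, pass to a subsequence of counterexamples, pull back via the Fermi diffeomorphisms $\kappa_j$ to the fixed half-cylinder $b(r) = B^{m-1}_r(0) \times [0, r)$, apply Lemma~\ref{lemma.bd.fam} together with Arzel\`a--Ascoli to extract a $\maC^\infty$-limit operator $P_\infty$ of the pulled-back Laplacians $P_j$, and invoke the classical a priori estimate for strongly elliptic operators with Neumann boundary data on the half-cylinder (see, e.g., \cite{Taylor1, LionsMagenes1}). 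As flagged in the paragraph preceding the statement, strong ellipticity (with a uniform ellipticity constant) passes to the limit, so $P_\infty$ still admits Neumann elliptic regularity, and one obtains the required contradiction.

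With all three types of local estimates in hand, I would choose a uniform partition of unity $\{\phi_\gamma\}$ subordinate to the cover $\{W_\gamma\}$ as in Proposition~\ref{prop.part.unit}, and apply to each $\phi_\gamma u$ the local estimate appropriate to the type of $W_\gamma$. By the bounded geometry assumption, the commutator $[\Delta, \phi_\gamma]$ is a first-order differential operator with uniformly bounded coefficients, so
\[
 \Vert \Delta(\phi_\gamma u)\Vert_{H^{k-1}(M)} \leq \Vert \phi_\gamma \Delta u\Vert_{H^{k-1}(M)} + C \Vert u\Vert_{H^k(W_\gamma)},
\]
with analogous controls for $\phi_\gamma u\vert_{\partialDM}$ and for $\pa_\nu(\phi_\gamma u)\vert_{\partialNM} = \phi_\gamma\,\pa_\nu u + u\,\pa_\nu \phi_\gamma$. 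Squaring the local estimates, summing over $\gamma$, invoking Proposition~\ref{prop.part.unit} and its boundary analog, and inducting on $k$ to absorb the intermediate $H^k$ norm of $u$ into the stated right-hand side then yields the global inequality.

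The main obstacle is the interplay between the weak, duality-defined Neumann trace and the localization procedure. One must verify that if $\Delta u = j_k(f, g)$ in the distributional sense of the excerpt, then $\Delta(\phi_\gamma u)$ still lies in the image of the local analog of $j_k$ and that the local Neumann datum is correctly identified with $\phi_\gamma g + u\,\pa_\nu \phi_\gamma$ (so that the final term $\Vert \pa_\nu u\Vert_{H^{k-1/2}(\partialNM)}$ on the right-hand side actually absorbs what the local estimates produce). Once this book-keeping is made rigorous, the rest of the argument follows the same template as the proof of Theorem~\ref{thm.regularity}.
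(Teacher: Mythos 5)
Your proposal matches the route the paper indicates: the paper does not spell out a full proof of Theorem~\ref{thm.regularity2} but states that the key point is that strongly elliptic operators with Neumann boundary conditions still satisfy elliptic regularity, so that Lemma~\ref{lemma.bd.fam2} extends to Neumann data and the argument of Theorem~\ref{thm.regularity} carries over verbatim; your classification of Fermi charts into interior/Dirichlet/Neumann (justified by $\partialDM$, $\partialNM$ being open and closed), the compactness argument for a uniform constant in the Neumann local estimate, and the partition-of-unity reassembly are exactly the intended steps. The book-keeping concern you raise about the duality-defined Neumann trace under localization is the sort of detail the paper deliberately defers to \cite{GN17, AGN3}, so flagging it is appropriate but does not represent a gap in your argument relative to the paper's sketch.
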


The following result then follows in the same way from Theorem
\ref{thm.regularity2} as Theorem \ref{thm.Hm} follows from Theorem
\ref{thm.regularity} in the previous subsection.

\begin{theorem}\label{thm.Hm2} 
Assume that $M$ is a manifold with bounded geometry and boundary, but
not necessarily of finite width. Let
\begin{align*}
  \tilde{\Delta}_k-\lambda \colon H^{k+1}(M) \to H^{k-1}(M) \oplus
  H^{k+1/2}(\partialDM) \oplus H^{k-1/2}(\partialNM)
\end{align*}
be given by $(\tilde{\Delta}_k - \lambda) u \define \big( \Delta u -
\lambda u, u\vert_{\partialDM}, \pa_\nu u\vert_{\partialNM} \big )$,
$k \in \NN$.  Then $\tilde \Delta_k - \lambda $ is an isomorphism for
$\Re(\lambda) < (1 + c_{M, \partialDM}^2)^{-1}$, where $c_{M,
  \partialDM}$ is the best constant in the Poincar\'e inequality, as
in the Introduction.  In particular, $\Delta$ is self-adjoint on
$L^2(M)$ with domain
\begin{equation*}
   \maD(\Delta ) \define \{u \in H^2(M)\vert  u = 0 \mbox{ on }
   \partialDM \mbox{ and } \pa_\nu u = 0 \mbox{ on } \partialNM\}.
\end{equation*}
\end{theorem}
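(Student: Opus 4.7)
The plan is to follow the blueprint of Theorem \ref{thm.Hm}: establish an $H^1$-level isomorphism via Lax--Milgram, upgrade it to $H^{k+1}$ via the elliptic estimate of Theorem \ref{thm.regularity2}, and finally deduce self-adjointness by exhibiting a real point of the resolvent set. First I would work on the energy space $V \define H^1_D(M)$ with the sesquilinear form $a_\lambda(u,v) \define (du,dv)_{L^2} - \lambda(u,v)_{L^2}$ and the continuous embedding $j_k\colon H^{k-1}(M) \oplus H^{k-1/2}(\partialNM) \hookrightarrow V^*$ described just before the statement. The Dirichlet condition on $\partialDM$ is encoded in the choice of $V$, whereas the Neumann condition on $\partialNM$ will arise as a natural boundary condition from $a_\lambda$. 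Exactly as in Proposition \ref{prop.isomorphism}, the Poincar\'e inequality (via Lemma \ref{lemma.eq.norms}) gives
\[
  \Re\,\<A_\lambda u,u\> \ \geq \ \bigl((1+c_{M,\partialDM}^2)^{-1} - \Re(\lambda)\bigr)\|u\|_{H^1}^2,
\]
(with $(1+\infty)^{-1}$ interpreted as $0$ when the Poincar\'e inequality fails), so $A_\lambda\colon V \to V^*$ is strongly coercive and hence an isomorphism by the Lax--Milgram lemma.

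Next I would promote this to the isomorphism claimed for $\tilde\Delta_k - \lambda$. Injectivity is immediate from injectivity of $A_\lambda$. For surjectivity, given $(f,g_D,g_N)$ in the target, I would fix a bounded right inverse of the Dirichlet trace, producing $w \in H^{k+1}(M)$ with $w|_{\partialDM}=g_D$; then solve $A_\lambda v = j_k\bigl(f - \Delta w + \lambda w,\ g_N - \pa_\nu w|_{\partialNM}\bigr)$ for $v \in V$ using Lax--Milgram; and set $u \define v+w$. By construction $(\tilde\Delta_k - \lambda)u = (f,g_D,g_N)$, and because this datum lies in $H^{k-1}(M) \oplus H^{k+1/2}(\partialDM) \oplus H^{k-1/2}(\partialNM)$, Theorem \ref{thm.regularity2} upgrades $u \in H^1(M)$ to $u \in H^{k+1}(M)$, finishing the isomorphism step.

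For self-adjointness, the plan is to observe that $\Delta$ with domain $\maD(\Delta)$ is symmetric, by a routine integration by parts whose boundary terms vanish on $\partialDM$ because of the Dirichlet condition on $u$ and on $\partialNM$ because of the Neumann condition on $\pa_\nu u$ (the trace theorem justifying each term). The $k=1$ case of the isomorphism, applied at any real $\lambda$ less than $\min\{0,(1+c_{M,\partialDM}^2)^{-1}\}$, places a real point in the resolvent set, and Remark \ref{rem.s-a} then yields self-adjointness on $L^2(M)$. The main obstacle I expect is the construction of the bounded right inverse of the Dirichlet trace $H^{k+1}(M) \to H^{k+1/2}(\partialDM)$ on a noncompact bounded-geometry manifold: one has to glue the classical half-space extension across a uniformly locally finite cover by Fermi charts and control the uniform bounds through the norm equivalence of Proposition \ref{prop.part.unit}. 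A secondary technical point is making precise the weak meaning of $\pa_\nu u|_{\partialNM}$ when $u$ has only enough regularity for $\Delta u$ to be a distribution in the image of $j_k$, but this is exactly what the duality construction sketched before the statement is designed to handle.
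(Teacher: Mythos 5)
Your proposal is correct and follows essentially the same route as the paper's (very terse) proof: Lax--Milgram at the energy level via the Poincar\'e inequality (Proposition~\ref{prop.isomorphism}/Theorem~\ref{thm.H1new}), reduction to homogeneous Dirichlet data via the trace theorem, upgrade to $H^{k+1}$ via Theorem~\ref{thm.regularity2}, and self-adjointness via Remark~\ref{rem.s-a}. Two small remarks: first, a bounded linear right inverse of the trace is not actually needed for the isomorphism claim---once $\tilde\Delta_k-\lambda$ is shown to be a continuous bijection, the open mapping theorem gives boundedness of the inverse, so any extension $w$ of $g_D$ suffices; second, when $k>1$ the passage from $u\in H^1$ to $u\in H^{k+1}$ via Theorem~\ref{thm.regularity2} is a finite bootstrap (one gains one degree of regularity at each step, since at the start one only knows $\Delta u=f+\lambda u\in L^2$ rather than in $H^{k-1}$), which you state as a one-step application but which is entirely routine.
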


Several extensions of the results of this paper are possible. Some of
them were included in the first version of the associated ArXiv
preprint, 1611.00281.v1, which had a slightly different title and will
not be published in that form. The results of that preprint 
  that were not already included in this paper, as well as others,
  will be included in two forthcoming papers \cite{GN17, AGN3}.

\def\cprime{$'$}

\end{document}